\documentclass[12pt, reqno]{amsart}
\usepackage{amsmath, amsthm, amscd, amsfonts, amssymb, graphicx, color}
\usepackage[bookmarksnumbered, colorlinks, plainpages]{hyperref}
\usepackage{tikz-cd}
\usepackage{mathrsfs} 

 \textheight 22.5truecm \textwidth 14.5truecm
 \setlength{\oddsidemargin}{0.35in}\setlength{\evensidemargin}{0.35in}

 \setlength{\topmargin}{-.5cm}

\calclayout

\setlength{\parindent}{0pt}

\newtheorem{theorem}{Theorem}[section]
\newtheorem{lemma}[theorem]{Lemma}
\newtheorem{proposition}[theorem]{Proposition}

\theoremstyle{definition}
\newtheorem{definition}[theorem]{Definition}

\theoremstyle{remark}
\newtheorem{remark}[theorem]{Remark}
\numberwithin{equation}{section}

\DeclareMathOperator{\di}{div}
\DeclareMathOperator{\spn}{span}
\DeclareMathOperator{\supp}{supp}
\DeclareMathOperator{\Var}{Var}

\usepackage{todonotes}
\usepackage{cancel}
\usepackage{physics}%Terms
\usepackage{xspace}

\begin{document}
\setcounter{page}{1}

\centerline{}

\centerline{}

\title[Cheeger inequality in CC-spaces]{On the Cheeger inequality in Carnot-Carath\'eodory spaces}

\author[M. Kluitenberg]{Martijn Kluitenberg$^1$}

\address{$^{1}$ Bernoulli Institute, University of Groningen, NL.}
\email{\textcolor[rgb]{0.00,0.00,0.84}{m.kluitenberg@rug.nl}}

\subjclass[2020]{Primary 58J50; Secondary 35P15, 58C40}

\keywords{CC-space, Sub-Laplacian, Cheeger inequality, Neumann boundary conditions, Nodal domains}

\date{\today}
 
\begin{abstract}
    We generalize the Cheeger inequality, a lower bound on the first nontrivial eigenvalue of a Laplacian, to the case of geometric sub-Laplacians on rank-varying Carnot-Carath\'eodory spaces and we describe a concrete method to lower bound the Cheeger constant. The proof is geometric, and works for Dirichlet, Neumann and mixed boundary conditions. One of the main technical tools in the proof is a generalization of Courant's nodal domain theorem, which is proven from scratch for Neumann and mixed boundary conditions. Carnot groups and the Baouendi-Grushin cylinder are treated as examples. 
\end{abstract} \maketitle

\tableofcontents

\section{Introduction}

In this article, we generalize the Cheeger inequality \cite{cheeger2015lower}, which is a geometric lower bound on the spectrum of the Laplacian, to possibly rank-varying Carnot-Carath\'eodory spaces (\textit{CC-spaces} or \textit{sub-Riemannian manifolds}). These geometries occur naturally when studying systems with constraints, for example in non-holonomic mechanics or thermodynamics \cite{ABB, car09}. Additionally, the motion of quantum particles in a CC-space, which can be modeled by sub-Laplacians, has applications to quantum systems \cite{boscain2002optimal}. \\

The spectral properties of sub-Laplacians have gotten quite some attention lately, see for example \cite{cdv2022spectral, boscain2013laplace, prandi2018quantum, franceschi2020essential, ES}. However, research into eigenvalue bounds for sub-Laplacians is still quite limited, despite this being an important topic in (classical) spectral geometry.  For example, the techniques of \cite{Asma14} only apply to constant-rank CC-spaces. Beyond applications to spectral geometry, the Cheeger constant of a manifold can be interesting in its own right. See \cite{franceschi2022cheeger} for a nice overview. \\

Cheeger-type inequalities for CC-spaces were already considered in the special case of Carnot groups by \cite{montefalcone2013geometric} and in the very general setting of \cite{franceschi2022cheeger}. What distinguishes this paper from \cite{franceschi2022cheeger} is our main aim and the resulting different generalizations. There, the authors were looking for the minimal assumptions to establish Cheeger inequalities. In contrast, here we look for a geometric argument that furthermore allows us to look at different boundary conditions. \\

The main result of this paper is a Cheeger inequality for domains $\Omega \subseteq M$, where $M$ is a CC-space, and where we impose Neumann boundary conditions on $\partial \Omega$. Below, $\lambda_2^N(\Omega)$ denotes the first nontrivial Neumann eigenvalue of the sub-Laplacian, which is defined below.

\begin{theorem}[Neumann-Cheeger inequality for CC-spaces]\label{thm:introNC}
	Let $M$ be a CC-space and $\Omega \subseteq M$ be a connected bounded domain with piecewise smooth boundary. Assume that
    \begin{itemize}
	\item[\textbf{(C)}] Either the topological dimension of $M$ is 2, or the manifold, the volume form $\omega$ and the vector fields $X_1 , \dots , X_m$ defining the sub-Riemannian structure are all real-analytic. 
\end{itemize} 
  Then, we have 
	\begin{equation}
	\lambda_2^N(\Omega) \geq \frac{1}{4}h_N(\Omega)^2,
	\end{equation}
 where \begin{equation}
	h_N(\Omega) = \inf_\Sigma \frac{\sigma(\Sigma)}{\min \{ \omega(\Omega_1) , \omega(\Omega_2) \}},
	\end{equation} 
	where the infimum is taken over all smooth (not necessarily connected) hypersurfaces $\Sigma \subseteq \Omega$ that separate $\Omega$ into two disjoint open sets $\Omega_1$ and $\Omega_2$. Here, $\omega$ is a smooth measure on $M$ and $\sigma$ is the induced surface measure.
\end{theorem}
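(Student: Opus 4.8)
The plan is to run the classical co-area proof of the Cheeger inequality, with the horizontal gradient $\nabla_H$ (so that $|\nabla_H f|^2 = \sum_{i=1}^m (X_i f)^2$) in place of the Riemannian gradient, using the generalized Courant nodal domain theorem to supply the structural facts about the first eigenfunction that replace the elliptic regularity input of the smooth case. \textbf{Step 1 (eigenfunction and nodal domains).} Start from the variational characterization of $\lambda_2^N(\Omega)$ as the infimum of the Rayleigh quotient $\int_\Omega |\nabla_H f|^2\,\omega\big/\int_\Omega f^2\,\omega$ over $f \in H^1(\Omega)$ with $\int_\Omega f\,\omega = 0$, and let $u$ be an associated eigenfunction, so that $\int_\Omega \langle \nabla_H u, \nabla_H \phi\rangle\,\omega = \lambda_2^N(\Omega)\int_\Omega u\phi\,\omega$ for every $\phi \in H^1(\Omega)$ (the natural, Neumann, boundary condition being encoded in the choice of form domain $H^1(\Omega)$ rather than $H^1_0(\Omega)$). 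Since $X_1,\dots,X_m$ bracket-generate, the sub-Laplacian is hypoelliptic, so $u$ is smooth in $\Omega$; and since $u \perp 1$ in $L^2(\Omega,\omega)$ while $u \not\equiv 0$, $u$ changes sign, so $\Omega_+ := \{u>0\}$ and $\Omega_- := \{u<0\}$ are nonempty open sets. Here is where hypothesis \textbf{(C)} enters: the generalized Courant theorem, proved in the paper for exactly this purpose under (C), guarantees that $\Omega_\pm$ behave as genuine nodal domains — in particular $u$ does not vanish on an open set — which is the step that can fail for a sub-Laplacian without a unique-continuation hypothesis. As $\Omega_+$ and $\Omega_-$ are disjoint subsets of $\Omega$, after replacing $u$ by $-u$ if necessary we may assume $\omega(\Omega_+) \le \tfrac12\omega(\Omega)$.

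\textbf{Step 2 (truncation, co-area, conclusion).} Put $v := u_+ = \max(u,0) \in H^1(\Omega)$, which is supported in $\overline{\Omega_+}$, and test the weak equation against $\phi = v$; using $\nabla_H v = \mathbf 1_{\{u>0\}}\nabla_H u$ this gives $\int_\Omega |\nabla_H v|^2\,\omega = \lambda_2^N(\Omega)\int_\Omega v^2\,\omega$. Now set $w := v^2 \in W^{1,1}_H(\Omega)$. By Cauchy–Schwarz, $\int_\Omega |\nabla_H w|\,\omega = 2\int_\Omega v\,|\nabla_H v|\,\omega \le 2\bigl(\int_\Omega v^2\,\omega\bigr)^{1/2}\bigl(\int_\Omega |\nabla_H v|^2\,\omega\bigr)^{1/2} = 2\sqrt{\lambda_2^N(\Omega)}\,\int_\Omega w\,\omega$. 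In the other direction, the sub-Riemannian co-area formula gives $\int_\Omega |\nabla_H w|\,\omega = \int_0^\infty \sigma(\{w=t\})\,dt$; by Sard's theorem, for almost every $t>0$ the set $\{w=t\} = \{u=\sqrt t\}$ is a smooth hypersurface contained in $\Omega_+$ which separates $\Omega$ into the open sets $\{w>t\}$ and $\{w<t\}$, and since $\{w>t\} \subseteq \Omega_+$ has measure $\le \tfrac12\omega(\Omega)$, the definition of $h_N(\Omega)$ yields $\sigma(\{w=t\}) \ge h_N(\Omega)\,\omega(\{w>t\})$. Integrating in $t$ and using the layer-cake identity $\int_0^\infty \omega(\{w>t\})\,dt = \int_\Omega w\,\omega$ gives $\int_\Omega |\nabla_H w|\,\omega \ge h_N(\Omega)\int_\Omega w\,\omega$. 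Comparing the two bounds and dividing by $\int_\Omega w\,\omega > 0$ yields $h_N(\Omega) \le 2\sqrt{\lambda_2^N(\Omega)}$, which is the asserted inequality $\lambda_2^N(\Omega) \ge \tfrac14 h_N(\Omega)^2$.

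\textbf{Main obstacle.} The delicate part is not the co-area computation but Step 1, and this is precisely why condition (C) is imposed: for a general rank-varying CC-space an eigenfunction of a hypoelliptic sub-Laplacian need not satisfy unique continuation, so its "nodal domains" can be pathological and the splitting argument collapses. The real work is therefore the from-scratch proof of Courant's theorem for Neumann (and mixed) boundary conditions under (C), where one must control the vanishing set and the sign components of $u$. A secondary technical point is to ensure that the co-area formula is applied with precisely the surface measure $\sigma$ appearing in $h_N(\Omega)$ — on the rank-varying locus and near $\partial\Omega$ one must reconcile the horizontal perimeter arising naturally in $\int_\Omega |\nabla_H w|\,\omega$ with the induced surface measure — and that the level sets $\{u=\sqrt t\}$ are admissible competitors for $h_N(\Omega)$, i.e.\ smooth hypersurfaces lying in $\Omega$ and genuinely separating it, for almost every $t$ (including where they meet the boundary, which one handles by a version of Sard's theorem for $u|_{\partial\Omega}$).
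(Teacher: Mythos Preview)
Your proof is correct and rests on the same core idea as the paper's: restrict the second Neumann eigenfunction to $\Omega_+ = \{u>0\}$ (arranged so that $\omega(\Omega_+)\le\tfrac12\omega(\Omega)$), run the Dirichlet-style coarea argument on $u_+^2$, and use that the super-level sets $\{u>\sqrt t\}\subseteq\Omega_+$ are admissible competitors for $h_N(\Omega)$. The paper, however, packages the same computation through an intermediate \emph{mixed} Cheeger inequality: it first shows (via the restriction Lemma~\ref{lem:cutoff} and a positivity argument) that $u|_{\Omega_+}$ is the \emph{first} eigenfunction of the mixed problem on $\Omega_+$ with Dirichlet data on $Z_u$, so that $\lambda_2^N(\Omega)=\lambda_1^Z(\Omega_+,Z_u)$; then proves $\lambda_1^Z(\Omega_+,Z_u)\ge\tfrac14 h_Z(\Omega_+,Z_u)^2$ exactly as in the Dirichlet case; and finally compares $h_Z(\Omega_+,Z_u)\ge h_N(\Omega)$. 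Your device of testing the weak Neumann equation directly against $\phi=u_+$ to obtain $\int|\nabla_H u_+|^2=\lambda_2^N\int u_+^2$ short-circuits the need to identify $u|_{\Omega_+}$ as a first mixed eigenfunction and is somewhat more economical; the paper's detour buys the mixed Cheeger inequality as a standalone result and makes explicit where the Courant machinery (and hence \textbf{(C)}) enters. You also correctly flag the one genuine technical checkpoint --- that the horizontal perimeter appearing in the coarea formula agrees with the surface measure $\sigma$ used to define $h_N$ --- which the paper handles via the identity $P_H(E;\Omega)=\oint_{\partial E}\iota_{\mathbf n_H}\omega$.
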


The usefulness of this lower bound depends on one's ability to compute or estimate the Cheeger constant. In Section \ref{sec:mfmc}, we present a general method of obtaining lower bounds on it in terms of ``test vector fields'', which is based on the max-flow min-cut theorem of \cite{grieser2006first}. \\

The proof of Theorem \ref{thm:introNC} follows the same flavor of the Euclidean construction, but its adaptation to CC-spaces requires several results to be revisited in a more general setting. Some of these are of interest in their own right, for example, Courant's nodal domain theorem \cite{courant1923allgemeiner}, which was obtained in \cite{ES} for the Dirichlet case and which we here generalize to more general boundary conditions. For the precise statement, we refer to Section \ref{sec:courant}. \\

A CC-space can be understood as a manifold $M$ together with a \emph{bracket-generating} family of vector fields $X_1 , \dots , X_m$, i.e. any vector in the tangent bundle is eventually obtained by taking iterated Lie brackets of the $X_j$'s. We give more precise definitions in Section \ref{sec:CC}. The sub-Laplacian, like in the Euclidean case, is the ``divergence of gradient''. Indeed, the \emph{horizontal gradient} of a function can be defined as 
\begin{equation}
    \nabla_H u = \sum_{i = 1}^m (X_i u) X_i
\end{equation}
and the \emph{divergence} is taken with respect to some fixed volume form $\omega$ on $M$: 
\begin{equation}
    L_X \omega = \di_\omega (X) \; \omega.
\end{equation}
In the above equation, $L_X$ denotes the Lie derivative on forms. The \emph{(geometric) sub-Laplacian} is then $\Delta u := \di_\omega (\nabla_H u)$.  \\ 

Important examples of CC-spaces are the Heisenberg group and the Grushin plane. The Heisenberg group is an example of an equiregular CC-space, as are all Carnot groups. The Grushin plane, by contrast, is \emph{rank-varying}, and has a singular set $Z$ outside of which the structure is Riemannian. In Section \ref{sec:examples}, we study Carnot groups and the Grushin cylinder (where one direction of the Grushin plane is compactified to a circle) in detail. In particular, we compute the spectrum  of $(0,1) \times \mathbb{S}^1$ equipped with the Grushin structure and a regular volume form. \\

For the proof of Courant's theorem, we discuss the analytical assumptions \textbf{(S)} and \textbf{(C)} on the CC-structure. Assumption \textbf{(C)} was present in Theorem \ref{thm:introNC}, while 
\begin{itemize}
	\item[\textbf{(S)}] Either $\Omega = M$ or the boundary of $\Omega$ is smooth and contains no characteristic points, i.e. no points such that $$T_p(\partial \Omega) \subseteq \mathcal{D}_p = \spn\{ X_1(p),\dots,X_m(p) \} .$$
\end{itemize}
 Under these two assumptions, any eigenfunction $u$ of the sub-Laplacian corresponding to $\lambda = \lambda_k$ has at most $k$ nodal domains. This was shown in \cite{ES} for Dirichlet boundary conditions. We extend the proof in \cite{ES} to mixed and Neumann boundary conditions. We also extend a recent result of \cite{frank2024courant} to mixed boundary conditions, which allows us to bypass assumption \textbf{(S)}.  \\

In the Euclidean case, the Cheeger inequality is sharp \cite[Section 5.2.2]{TSG}. Moreover, in the case of closed Riemannian manifolds, we have the following optimality result due to Buser \cite{buser1978ungleichung}: For all $h > 0,$ $\varepsilon > 0$ and $k \in \mathbb{N}$, there is a closed 2-dimensional Riemannian manifold which has Cheeger constant $h$ and $\lambda_{k + 1}(M) < \frac{h^2}{4} + \varepsilon$. We expect sharpness in the CC-case to be a delicate and complex problem, which we decided to postpone to future research.
In Section \ref{sec:examples}, however, we look at some model examples to test our results in context. \\

Throughout the paper, we assume that all CC-spaces come equipped with a fixed smooth volume form $\omega$. In general, there is no canonical way to choose $\omega$. For Carnot groups, the natural choice is the left Haar measure. For the Grushin plane $M$, the ``natural'' volume form is then the Riemannian volume, which is defined on $M \setminus Z$.  However, this volume form blows up on the singular set, hence our results do not apply to it, though this case is studied, for example in \cite{prandi2018quantum,franceschi2020essential}. We take the agnostic point of view as to which volume form is the ``correct'' one, and just fix any smooth one. \\

The article is structured as follows: We start in Section \ref{sec:preliminaries} by defining CC-spaces, generating families and their main properties, and we define the sub-Laplacian. We then study its self-adjoint extensions and the corresponding boundary conditions. Afterwards, we recall the notion of horizontal perimeter and the coarea formula, which is an important technical tool. In Section \ref{sec:courant}, we prove Courant's theorem for Neumann and mixed boundary conditions. Then in Sections \ref{sec:DC} and \ref{sec:NC}, we state and prove Cheeger's inequality for Dirichlet and Neumann boundary conditions respectively. In Section \ref{sec:mfmc}, we present a method of obtaining lower bounds on the Cheeger constant. Finally, in Section \ref{sec:examples}, we consider how the Cheeger inequality applies to Carnot groups and to the Gruhsin cylinder. 

\section{Preliminaries}\label{sec:preliminaries}

\subsection{Carnot-Carath\'eodory spaces}\label{sec:CC}

We start by giving a very general definition of a CC-space. After that, we introduce the concept of a \emph{generating family}, which should make things more transparent.

\begin{definition}
    Let $M$ be either $\mathbb{R}^n$ or an $n$-dimensional smooth, connected, compact manifold without boundary. A \emph{horizontal structure} on $M$ is a pair $(\vb{U}, f)$ where $\vb{U} \xrightarrow{\pi_{\vb{U}}} M$ is a Euclidean vector bundle over $M$, and $f : \vb{U} \to TM$ is a morphism of vector bundles. The set of \emph{horizontal vector fields} is defined to be $$\mathfrak{X}_H(M) = \{ f \circ \sigma : \sigma \in \Gamma(\vb{U}) \} \subseteq \mathfrak{X}(M).$$
    Let $\mathrm{Lie}(\mathfrak{X}_H(M))$ be the smallest Lie subalgebra of $\mathfrak{X}(M)$ containing $\mathfrak{X}_H(M)$ and set $\mathcal{D}_p := \{ X(p) : X \in \mathrm{Lie}(\mathfrak{X}_H(M)) \} \subseteq T_p M$. We say that $(\vb{U},f)$ is \emph{bracket-generating} if $\mathcal{D}_p = T_p M$ for all $p \in M$. A \emph{CC-space} is a quadruple $(M,\vb{U},f,\omega)$ such that $(\vb{U},f)$ is a bracket-generating horizontal structure on $M$ and $\omega \in \Omega^n(M)$ is a smooth volume form. 
\end{definition}

Let $M$ be a CC-space and $v \in \mathcal{D}_p$. Define the norm
\[
    \|v\| = \min\{ |\vb{u}| : \vb{u} \in \vb{U}_p \text{ and } f(\vb{u}) = v \},
\]
where $|\cdot|$ denotes the norm on the fiber $\vb{U}_p$. It turns out \cite[Exercise 3.9]{ABB} that the norm $\| \cdot \|$ is induced by an inner product $g_p(\cdot , \cdot)$ on $\mathcal{D}_p$, which can be recovered from the norm by polarization:
\begin{equation}
    g_p(X,Y) = \frac{\|X + Y\|^2 - \|X - Y \|^2}{4}.
\end{equation}
The \emph{CC-metric} $g$ is defined pointwise: If $X, Y \in \mathfrak{X}_H(M)$, we set
\[
    g(X,Y) (p) = g_p (X(p), Y(p)). 
\]
It is often assumed that the rank $r(p) = \dim(\mathcal{D}_p)$ is constant across $M$. In this case, it is possible to find a local orthonormal frame $Y_1 , \dots , Y_k$ for the distribution. Although this simplifies life a lot, we do not make this assumption. 

\begin{definition}
    A curve $\gamma : I \to M$ is called \emph{horizontal} if there exists a measurable and essentially bounded function $\vb{u} : I \to \vb{U}$ such that $\vb{u}(t) \in \vb{U}_{\gamma(t)}$ and $\gamma'(t) = f(\vb{u}(t))$ for almost every $t \in I$. The \emph{length} of a horizontal curve is 
    \[
    \ell(\gamma) := \int_I \sqrt{g_{\gamma(t)}(\gamma'(t), \gamma'(t))} \dd{t}.
    \]
    Finally, we define the \emph{Carnot-Carath\'eodory distance} between two points $p,q \in M$ as 
    \[
    d_{CC}(p,q) := \inf \{ \ell(\gamma) : \gamma \text{ is a horizontal curve from $p$ to $q$} \}.
    \]
\end{definition}
If $M$ is a CC-space, it is a standard result \cite[Theorem 3.31]{ABB} that any two points of $M$ can be connected by a horizontal curve. Moreover, $d_{CC}$ is a distance function on $M$ whose induced topology is equivalent to the manifold topology. \\

Without loss of generality, one may assume that the bundle $\vb{U}$ is trivial, i.e. $\vb{U} \simeq M \times \mathbb{R}^m$ \cite[Section 3.1.4]{ABB}. Let $\{\vb{e}_1 , \dots , \vb{e}_m \}$ be a global orthonormal frame for $\vb{U}$, i.e. $\vb{e}_i : M \to \vb{U}$ are sections of $\vb{U}$ such that 
\[
\langle \vb{e}_i(p), \vb{e}_j(p) \rangle = \delta_{ij},
\]
where $\langle \cdot , \cdot \rangle$ denotes the inner product on the fiber $\vb{U}_p$. The vector fields $X_i := f \circ \vb{e}_i$ are called a \emph{generating family} for the horizontal structure on $M$. Many concepts in CC-geometry become clearer if viewed through the lens of a generating family. For example, any section $\sigma \in \Gamma (\vb{U})$ can be written as $\sigma(p) = \sum_{i = 1}^m u_i(p)  \vb{e}_i (p)$ for smooth functions $u_i : M \to \mathbb{R}$. Since the map $f : \vb{U} \to TM$ is linear on fibers, we get 
\[
f \circ \sigma (p) = \sum_{i = 1}^m u_i(p) X_i (p).
\]
That is, any horizontal vector field can be written as a $C^\infty (M)$-linear combination of the generating family. Similarly, a curve $\gamma : I \to M$ is horizontal if there are functions $u_i \in L^\infty (I)$ such that 
\[
\gamma'(t) = \sum_{i = 1}^m u_i (t) X_i(\gamma (t))
\]
for almost every $t \in I$. \\

The generating family $X_1 , \dots , X_m$ is not necessarily orthonormal with respect to the sub-Riemannian metric $g$. We can only guarantee this in the simple case where $f$ is injective on fibers. We do however preserve a ``completeness''-type relation. We omit the proof, which comes down to a linear algebra calculation at every point.  

\begin{proposition}
    Let $M$ be a CC-space with generating family $X_1 , \dots , X_m$. Then, 
     \begin{equation}\label{eq:h-grad-check-2}
         \sum_{i=1}^m g(X_i,X) X_i  = X.
     \end{equation}
     for all $X \in \mathfrak{X}_H(M)$. In particular, the expression $\sum_{i=1}^m g(X_i,X) X_i$ is independent of the generating family. 
\end{proposition}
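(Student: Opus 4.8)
The identity \eqref{eq:h-grad-check-2} is pointwise, so the plan is to fix $p \in M$, work inside the single Euclidean fiber $(\vb{U}_p, \langle\cdot,\cdot\rangle)$, and write $f_p\colon \vb{U}_p \to T_pM$ for the restriction of $f$. Since every horizontal vector field has the form $f\circ\sigma$, both $X(p)$ and each $X_i(p) = f_p(\vb{e}_i(p))$ lie in the image $H_p := f_p(\vb{U}_p)$, so it suffices to prove the relation for vectors of $H_p$. The key structural input is that $H_p$, equipped with the inner product $g_p$, is isometric via $f_p$ to the orthogonal complement $K_p^\perp$ of $K_p := \ker f_p$; granting this, the claim collapses to the completeness relation for the orthonormal basis $\{\vb{e}_i(p)\}$ of $\vb{U}_p$ together with linearity of $f_p$.

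First I would record the linear-algebra lemma behind the definition of $g_p$ (this is essentially \cite[Exercise 3.9]{ABB}). For $v \in H_p$ the preimage $f_p^{-1}(v)$ is an affine translate of $K_p$, so $\vb{u}\mapsto|\vb{u}|$ attains its minimum on it at the unique point $v^\sharp \in f_p^{-1}(v)\cap K_p^\perp$, and $\|v\| = |v^\sharp|$ with $v\mapsto v^\sharp$ equal to the linear inverse of $f_p|_{K_p^\perp}\colon K_p^\perp \to H_p$. Polarizing the identity $\|v\|^2 = |v^\sharp|^2$ then gives $g_p(v,w) = \langle v^\sharp, w^\sharp\rangle$, i.e. $f_p|_{K_p^\perp}$ is an isometry $(K_p^\perp,\langle\cdot,\cdot\rangle)\to(H_p,g_p)$.

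Next, fix $X\in\mathfrak{X}_H(M)$, put $\vb{w} := X(p)^\sharp\in K_p^\perp$ (so $f_p(\vb{w}) = X(p)$), and split $\vb{e}_i(p) = \vb{a}_i + \vb{b}_i$ with $\vb{a}_i\in K_p^\perp$, $\vb{b}_i\in K_p$. Then $X_i(p) = f_p(\vb{a}_i)$, and since $\vb{b}_i\perp\vb{w}$,
\[
g_p(X_i(p),X(p)) = g_p(f_p(\vb{a}_i),f_p(\vb{w})) = \langle\vb{a}_i,\vb{w}\rangle = \langle\vb{e}_i(p),\vb{w}\rangle .
\]
Substituting this and using $\sum_i\langle\vb{e}_i(p),\vb{w}\rangle\vb{e}_i(p) = \vb{w}$ (completeness of the orthonormal basis) together with linearity of $f_p$,
\[
\sum_{i=1}^m g_p(X_i(p),X(p))\,X_i(p) = f_p\!\left(\sum_{i=1}^m \langle\vb{e}_i(p),\vb{w}\rangle\,\vb{e}_i(p)\right) = f_p(\vb{w}) = X(p),
\]
which is \eqref{eq:h-grad-check-2}. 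The last assertion of the proposition is then immediate, since the common value is $X$, which does not involve a choice of generating family.

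The only step with any content is the isometric identification of $(H_p,g_p)$ with $(K_p^\perp,\langle\cdot,\cdot\rangle)$; everything else is bookkeeping. I would also flag the small caveat that $H_p$ may be a proper subspace of the bracket-generated space $\mathcal{D}_p$, but this is harmless here because every vector occurring in \eqref{eq:h-grad-check-2} is horizontal, and that smoothness of the frame $\{\vb{e}_i\}$ plays no role in the algebraic identity itself — only in ensuring it holds for an honest smooth generating family.
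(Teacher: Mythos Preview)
Your proof is correct and is exactly the kind of pointwise linear-algebra computation the paper alludes to; the paper in fact omits the proof entirely, remarking only that it ``comes down to a linear algebra calculation at every point,'' and your argument supplies precisely those details via the isometric identification $(K_p^\perp,\langle\cdot,\cdot\rangle)\cong(H_p,g_p)$ and the completeness of the orthonormal frame $\{\vb{e}_i(p)\}$.
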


\subsection{Sub-Laplacians}

We now turn to spectral geometry. In this subsection, we shall define the geometric sub-Laplacian, which is a generalization of the Laplacian on Euclidean space $\mathbb{R}^n$ in the sense that both can be written as ``divergence of gradient''. Let $(M,\mathcal{D},g, \omega)$ be a CC-space. We define the \emph{horizontal gradient} of a function $u \in C^\infty (M)$ as the unique horizontal vector field such that
\begin{equation}\label{eq:h-grad-defn}
	g( \nabla_H u, X ) = Xu, \qquad \forall X \in \mathfrak{X}_H(M).
\end{equation}
The horizontal gradient has a simpler expression in terms of a generating family \cite[Exercise 21.1]{ABB}. The proof follows directly from (\ref{eq:h-grad-check-2}). 
\begin{proposition}
    If $X_1 , \dots , X_m$ is a generating family, then
\begin{equation}\label{eq:h-grad-frame}
    \nabla_H u = \sum_{i = 1}^m ( X_i u) X_i.
\end{equation}
In particular, the expression $\sum_{i = 1}^m ( X_i u) X_i$ is independent of the generating family.
\end{proposition}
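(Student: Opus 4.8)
The plan is to read the claimed formula straight off the defining property \eqref{eq:h-grad-defn} of the horizontal gradient together with the completeness relation \eqref{eq:h-grad-check-2}. The key observation is that $\nabla_H u$ is, by definition, itself a horizontal vector field, so \eqref{eq:h-grad-check-2} is applicable to it. Before doing anything, it is worth recalling why $\nabla_H u$ is even well defined: at each $p$ the assignment $v \mapsto (du)_p(v)$ is a linear functional on the inner product space $(\mathcal{D}_p, g_p)$ — this uses that $(du)_p(X(p))$ depends only on $X(p)$ — so the Riesz representation theorem produces a unique $\nabla_H u(p) \in \mathcal{D}_p$ satisfying $g_p(\nabla_H u(p), v) = (du)_p(v)$, which is exactly \eqref{eq:h-grad-defn}. (This observation is not strictly needed, since the proposition itself will exhibit an explicit formula, but it clarifies the setup.)

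The computation then has two steps. First, apply \eqref{eq:h-grad-check-2} with the horizontal vector field $X = \nabla_H u$ to get $\nabla_H u = \sum_{i=1}^m g(X_i, \nabla_H u)\, X_i$. Second, evaluate each coefficient: since each $X_i$ is horizontal, \eqref{eq:h-grad-defn} applied with test field $X = X_i$ gives $g(\nabla_H u, X_i) = X_i u$, and $g$ is symmetric, so $g(X_i, \nabla_H u) = X_i u$. Substituting yields $\nabla_H u = \sum_{i=1}^m (X_i u)\, X_i$, which is \eqref{eq:h-grad-frame}.

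For the ``in particular'' clause, note that $\nabla_H u$ was defined in \eqref{eq:h-grad-defn} purely in terms of the metric $g$, with no reference to any generating family; hence the identity just proved shows $\sum_{i=1}^m (X_i u) X_i$ does not depend on the choice of generating family. (Equivalently, this is the special case $X = \nabla_H u$ of the independence already recorded in the preceding proposition.) I do not expect any genuine obstacle here: the entire content sits in \eqref{eq:h-grad-check-2}, whose proof is the pointwise linear-algebra computation that was deliberately omitted earlier, and the only point deserving a word of care is that feeding the non-frame vector field $\nabla_H u$ into \eqref{eq:h-grad-check-2} is legitimate precisely because that relation is stated for all $X \in \mathfrak{X}_H(M)$.
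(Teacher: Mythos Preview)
Your proof is correct and follows exactly the paper's approach: the paper simply states that the result ``follows directly from \eqref{eq:h-grad-check-2}'', which is precisely your two-step computation of applying the completeness relation to $X = \nabla_H u$ and then reading off the coefficients via \eqref{eq:h-grad-defn}. Your additional remarks on well-definedness and the independence clause are sound elaborations of what the paper leaves implicit.
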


We define the \emph{divergence} of a vector field with respect to the fixed volume form $\omega$ by the equation
\begin{equation}
	L_X \omega = \di_\omega (X) \cdot \omega, \qquad X \in \mathfrak{X}(M),
\end{equation}
where $L_X$ denotes the Lie derivative. Finally, we define the \emph{(geometric) sub-Laplacian} as the divergence of the horizontal gradient: 
\begin{equation}
    \Delta u = \di_\omega (\nabla_H u), \quad u \in C^\infty (M). 
\end{equation}
In terms of a generating family, we have \cite[p. 577]{ABB}
\begin{equation}
    \Delta u = \sum_{i = 1}^m \left( X_i^2 u + \di_\omega(X_i) X_i u \right),
\end{equation}
i.e. the sub-Laplacian is the sum of squares $\sum_{i = 1}^m X_i^2$ plus a first-order term.

\begin{remark}
    The geometric sub-Laplacian belongs to a class of second-order differential operators on $M$. If $(X_0,X_1,\dots , X_m)$ is any family of bracket-generating vector fields on $M$, then the differential operator $L = \sum_{i = 1}^m X_i^2 + X_0$ is called a sub-Laplacian \cite{gordina2016sub} on $M$. By H\"ormander's theorem \cite{hormander1967hypoelliptic}, any such operator is hypoelliptic. In \cite{ES}, the operator $\sum_{i = 1}^m X_i ^* X_i$ is considered, where $X_i^* = - X_i - \di_\omega (X_i)$ is the formal adjoint of $X_i$. This operator is equal to the geometric sub-Laplacian if $(X_1 , \dots , X_m)$ is an orthonormal frame for the distribution. 
\end{remark}

Let $\Omega \subseteq M$ be a bounded domain with a piecewise smooth boundary. We have the following simple consequence of the divergence theorem (see \cite[p. 13]{Asma14}): 

\begin{proposition}[Sub-Riemannian Gauss-Green formula]
	The identity
	\begin{equation}\label{eq:GG}
		\int_\Omega (\Delta u) v \; \omega + \int_\Omega g(\nabla_H u, \nabla_H v )\; \omega = \oint_{\partial \Omega} v \; \iota_{\nabla_H u}\;  \omega, 
	\end{equation}
    holds for all  $u,v \in C^\infty (\overline{\Omega})$.  
\end{proposition}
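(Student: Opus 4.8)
The plan is to deduce \eqref{eq:GG} from Stokes' theorem applied to the vector field $Y := v\,\nabla_H u$, combined with a Leibniz rule for the volume divergence. First I would record the product rule
\[
  \di_\omega(vX) = Xv + v\,\di_\omega(X), \qquad v \in C^\infty(M),\ X \in \mathfrak{X}(M),
\]
which follows from Cartan's magic formula: since $\omega$ is a top-degree form, $d\omega = 0$, so
\[
  L_{vX}\omega = d\,\iota_{vX}\omega = d(v\,\iota_X\omega) = dv\wedge\iota_X\omega + v\,d\iota_X\omega = dv\wedge\iota_X\omega + v\,L_X\omega .
\]
Contracting the identically vanishing $(n+1)$-form $dv\wedge\omega$ with $X$ gives the pointwise identity $dv\wedge\iota_X\omega = (Xv)\,\omega$, and dividing through by $\omega$ yields the product rule.

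Next I would take $X = \nabla_H u$, which is a genuine smooth horizontal vector field. By the defining property \eqref{eq:h-grad-defn} of the horizontal gradient, applied with the horizontal field $\nabla_H u$ in the test slot, $(\nabla_H u)\,v = g(\nabla_H v, \nabla_H u) = g(\nabla_H u, \nabla_H v)$, while $v\,\di_\omega(\nabla_H u) = v\,\Delta u$ by the very definition of the sub-Laplacian. Hence $\di_\omega(v\,\nabla_H u) = g(\nabla_H u, \nabla_H v) + v\,\Delta u$. Integrating this against $\omega$ over $\Omega$ and using the divergence theorem $\int_\Omega \di_\omega(Y)\,\omega = \oint_{\partial\Omega} \iota_Y\,\omega$, i.e. Stokes' theorem for the $(n-1)$-form $\iota_Y\omega = v\,\iota_{\nabla_H u}\omega$, gives
\[
  \int_\Omega v\,\Delta u\,\omega + \int_\Omega g(\nabla_H u, \nabla_H v)\,\omega = \oint_{\partial\Omega} v\,\iota_{\nabla_H u}\,\omega,
\]
which is \eqref{eq:GG}.

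Since $u, v \in C^\infty(\overline{\Omega})$, all integrands extend continuously to $\overline{\Omega}$ and there is no integrability obstruction, so the only point requiring care is the validity of Stokes' theorem on $\Omega$: its boundary is merely piecewise smooth, so one should either observe that the nonsmooth locus of $\partial\Omega$ carries zero $(n-1)$-dimensional measure and invoke the version of Stokes' theorem for domains with Lipschitz (piecewise smooth) boundary, or exhaust $\Omega$ from the inside by domains with smooth boundary and pass to the limit using the uniform bounds coming from smoothness of $u,v,\nabla_H u$ up to $\overline\Omega$. This is the only genuine technical point; everything else is the elementary bookkeeping above.
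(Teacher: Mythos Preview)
Your proof is correct and is exactly the argument the paper has in mind: the paper does not give a proof but simply states the identity as ``a simple consequence of the divergence theorem,'' and your write-up---apply the product rule $\di_\omega(vX)=Xv+v\,\di_\omega(X)$ with $X=\nabla_H u$, then Stokes---is precisely that consequence spelled out in detail. The care you take with the piecewise smooth boundary is appropriate but not elaborated in the paper.
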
 

Let $L^2(\Omega)$ be the space of functions on $\Omega$ which are square-integrable with respect to $\omega$. The symbols $\| \cdot \|$ and $\langle \cdot , \cdot \rangle$ without any subscript denote the $L^2$-norm and inner product respectively. We now construct the appropriate class of Sobolev spaces to study sub-Laplacians on $\Omega$. Define 
\[
S^1 (\Omega) := \{ u \in L^2(\Omega) : \nabla_H u \in L^2(\Omega) \}.
\]
Here, we understand that $\nabla_H u = \sum_{i = 1}^m (X_i u) X_i$ is defined in the sense of distributions. We equip $S^1 (\Omega)$ with the norm
\begin{equation}
	\|u\|_{S^1}^2 := \|u\|^2 + \| \nabla_H u\|^2. 
\end{equation}
We further define $S_0^1(\Omega)$ as the closure of $C^\infty_0(\Omega)$ taken with respect to the $S^1$-norm. As in the Euclidean case, functions in $S_0^1(\Omega)$ effectively satisfy Dirichlet boundary conditions. We will also need to discuss mixed boundary conditions, where we enforce Dirichlet boundary conditions on a subset $\Gamma \subseteq \partial \Omega$ and Neumann boundary conditions on $\partial \Omega \setminus \Gamma$. The set $\Gamma$ is assumed to have finitely many connected components. Mixed boundary conditions can be encoded in the space $C^\infty_{0,\Gamma}(\Omega)$ of smooth functions whose support doesn't intersect $\overline{\Gamma}$: 
\[
C_{0,\Gamma}^\infty (\Omega) := \{ u \in C^\infty (\Omega)  : \supp(u) \cap \overline{\Gamma} = \varnothing \}.
\]
We then define the corresponding space $S^1_{0,\Gamma}(\Omega)$ as the closure of $C_{0,\Gamma}^\infty (\Omega)$ taken with respect to the $S^1$-norm. 

\begin{proposition}
    Let $M$ be a CC-space and $\Omega \subseteq M$ open. Then, $S^1(\Omega), S^1_0(\Omega)$ and $S^1_{0,\Gamma}(\Omega)$ are Hilbert spaces. 
\end{proposition}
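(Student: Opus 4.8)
The plan is to prove that $S^1(\Omega)$ is complete directly from completeness of $L^2(\Omega)$, and then to read off the statements for $S^1_0(\Omega)$ and $S^1_{0,\Gamma}(\Omega)$ from the elementary fact that a closed linear subspace of a Hilbert space is again a Hilbert space; this forces the order in which the three spaces are treated. To begin, I would observe that $\|\cdot\|_{S^1}$ is the norm associated with the symmetric, positive-definite bilinear form
\[
\langle u,v\rangle_{S^1} := \langle u,v\rangle + \sum_{i=1}^m \langle X_i u,\, X_i v\rangle ,
\]
where I have used that, by the defining relation (\ref{eq:h-grad-defn}) of the horizontal gradient together with (\ref{eq:h-grad-check-2}), one has $\sum_{i=1}^m \|X_i u\|^2 = \langle \nabla_H u,\nabla_H u\rangle = \|\nabla_H u\|^2$, so this form indeed induces $\|\cdot\|_{S^1}$. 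Hence $S^1(\Omega)$ is a pre-Hilbert space, and only completeness remains to be checked.

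For completeness, let $(u_k)_k$ be a $\|\cdot\|_{S^1}$-Cauchy sequence. Then $(u_k)_k$ is Cauchy in $L^2(\Omega)$ and, for each $i$, so is $(X_i u_k)_k$; by completeness of $L^2(\Omega)$ there exist $u, v_1, \dots, v_m \in L^2(\Omega)$ with $u_k \to u$ in $L^2(\Omega)$ and $X_i u_k \to v_i$ in $L^2(\Omega)$ for each $i$. The heart of the argument is to identify $v_i$ with the distributional derivative $X_i u$. Using the formal adjoint $X_i^* = -X_i - \di_\omega(X_i)$, which maps $C^\infty_0(\Omega)$ into itself, one computes for every test function $\varphi \in C^\infty_0(\Omega)$
\[
\langle v_i,\varphi\rangle = \lim_{k\to\infty}\langle X_i u_k,\varphi\rangle = \lim_{k\to\infty}\langle u_k,\, X_i^*\varphi\rangle = \langle u,\, X_i^*\varphi\rangle ,
\]
where the two outer equalities use $L^2$-convergence and the middle one is integration by parts. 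Since the last expression is by definition $X_i u$ paired against $\varphi$, we conclude $X_i u = v_i \in L^2(\Omega)$ for every $i$; thus $\nabla_H u \in L^2(\Omega)$, so $u \in S^1(\Omega)$, and $\|u_k - u\|_{S^1}^2 = \|u_k - u\|^2 + \sum_{i=1}^m \|X_i u_k - v_i\|^2 \to 0$. This shows that $S^1(\Omega)$ is a Hilbert space.

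Finally, $S^1_0(\Omega)$ and $S^1_{0,\Gamma}(\Omega)$ are, by definition, the $\|\cdot\|_{S^1}$-closures in $S^1(\Omega)$ of the linear subspaces $C^\infty_0(\Omega)$ and $C^\infty_{0,\Gamma}(\Omega)$, respectively; hence they are closed linear subspaces of the Hilbert space $S^1(\Omega)$ and therefore Hilbert spaces in their own right. I expect the one genuinely delicate point to be the identification of the $L^2$-limit $v_i$ with the distributional derivative $X_i u$ — equivalently, the continuity of the differential operator $X_i$ from $L^2(\Omega)$ into the space of distributions on $\Omega$, which is exactly what the displayed computation establishes; the remainder is the classical completeness argument for Sobolev-type spaces.
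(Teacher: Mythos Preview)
Your proof is correct and follows the same overall strategy as the paper: establish that $S^1(\Omega)$ is a Hilbert space, then observe that $S^1_0(\Omega)$ and $S^1_{0,\Gamma}(\Omega)$ are closed subspaces and hence complete. The only difference is that the paper simply cites \cite{garofalo1996isoperimetric} for the completeness of $S^1(\Omega)$, whereas you spell out the standard Cauchy-sequence argument in full; your verification that $\sum_i \|X_i u\|^2 = \|\nabla_H u\|^2$ via (\ref{eq:h-grad-defn}) and (\ref{eq:h-grad-check-2}) is a nice touch that the paper leaves implicit.
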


\begin{proof}
    The fact that $S^1(\Omega)$ is a Hilbert space is a standard fact, see e.g. \cite[p. 1083]{garofalo1996isoperimetric}. Further, $S^1_0(\Omega)$ and $S^1_{0,\Gamma}(\Omega)$ are closed subspaces of $S^1(\Omega)$, so they are also complete. 
\end{proof}

The next Lemma will play an important role in the proof of Courant's theorem (Section \ref{sec:courant}): 

\begin{lemma}[Vanishing Lemma]\label{lem:vanishing}
	Let $u \in S^1(\Omega)$. Suppose that for all $y \in \Gamma$ and for all $\varepsilon > 0$ there exists a neighborhood $U$ of $y$ such that $|u(x)|< \varepsilon$ for all $x \in U \cap \Omega$. Then, $u \in S^1_{0,\Gamma}(\Omega)$. 
\end{lemma}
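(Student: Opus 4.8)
The plan is to reduce, by a truncation argument, to the case in which $u$ vanishes identically on a neighborhood of $\overline{\Gamma}$, and then to conclude by a cutoff-and-mollification procedure.

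\textbf{Step 1 (truncation).} For $\delta>0$ I would consider $u_\delta := T_\delta\circ u$, where $T_\delta(t)=\operatorname{sgn}(t)\,(|t|-\delta)_+$ is globally Lipschitz with $T_\delta(0)=0$ and $T'_\delta=\mathbf{1}_{\{|t|>\delta\}}$ a.e. By the chain rule for functions in $S^1(\Omega)$ composed with Lipschitz maps — which holds here because the $X_i$ are smooth and the statement is local, so the classical Stampacchia-type results apply — one has $u_\delta\in S^1(\Omega)$ with $\nabla_H u_\delta=\mathbf{1}_{\{|u|>\delta\}}\,\nabla_H u$. Now $|u_\delta-u|=\min(|u|,\delta)\to0$ pointwise and is dominated by $|u|\in L^2(\Omega)$, while $\mathbf{1}_{\{|u|>\delta\}}\nabla_H u\to\nabla_H u$ a.e.\ as $\delta\to0^+$ — using that $\nabla_H u=0$ a.e.\ on $\{u=0\}$ — and is dominated by $|\nabla_H u|$. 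By dominated convergence $u_\delta\to u$ in $S^1(\Omega)$, so it suffices to prove $u_\delta\in S^1_{0,\Gamma}(\Omega)$ for each fixed $\delta$.

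\textbf{Step 2 (cutoff near $\overline\Gamma$, then mollify).} Fix $\delta$ and apply the hypothesis with $\varepsilon=\delta$: each $y\in\Gamma$ has an open neighborhood $U_y$ in $M$ with $|u|<\delta$, hence $u_\delta\equiv0$, on $U_y\cap\Omega$. Thus $u_\delta$ vanishes a.e.\ on $V\cap\Omega$, where $V:=\bigcup_{y\in\Gamma}U_y$ is an open neighborhood of $\Gamma$; if $\Gamma$ is relatively closed in the compact boundary $\partial\Omega$ then $\overline\Gamma=\Gamma$ and $V\supseteq\overline\Gamma$, while in general $\overline\Gamma\setminus\Gamma$ lies in the lower-dimensional ``edge'' set of $\partial\Omega$, which is $S^1$-capacity-negligible, so one may still arrange $V\supseteq\overline\Gamma$ at the cost of an extra truncation of $u$ at a finite height and a logarithmic cutoff (a standard removability argument). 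Let $\rho:=d(\cdot,\overline\Gamma)$ and, for small $t>0$, choose a smooth $\eta_t:M\to[0,1]$ equal to $0$ on $\{\rho<t\}$ and to $1$ on $\{\rho\geq2t\}$. Since $\overline\Gamma$ is compact and $V$ open, $\{\rho<2t\}\cap\Omega\subseteq V\cap\Omega$ for $t$ small, so $u_\delta\,\nabla_H\eta_t=0$ and $\nabla_H(\eta_t u_\delta)=\eta_t\nabla_H u_\delta$. As $\overline\Gamma\subseteq\partial\Omega$ is disjoint from the open set $\Omega$, we have $\eta_t\to1$ pointwise on $\Omega$, so dominated convergence gives $\eta_t u_\delta\to u_\delta$ in $S^1(\Omega)$, and each $\eta_t u_\delta$ vanishes on the neighborhood $\{\rho<t\}$ of $\overline\Gamma$. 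Finally, by the density of $C^\infty(\Omega)\cap S^1(\Omega)$ in $S^1(\Omega)$ (a Meyers--Serrin-type theorem for these vector-field Sobolev spaces, proved chart by chart on the compact $\overline\Omega$ with a partition of unity) one approximates $\eta_t u_\delta$ in $S^1$ by $C^\infty(\Omega)$ functions which, after multiplication by a fixed cutoff supported in $\{\rho>t/2\}$ — legitimate since $\eta_t u_\delta$ already vanishes on $\{\rho<t\}$ — lie in $C^\infty_{0,\Gamma}(\Omega)$. Diagonalizing over $\delta$, $t$, and the mollification scale yields $u\in S^1_{0,\Gamma}(\Omega)$.

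\textbf{Where the difficulty lies.} There are two genuinely delicate points. First, the chain rule and the a.e.\ vanishing of $\nabla_H u$ on level sets of $u$ must be established for the sub-Riemannian Sobolev space $S^1(\Omega)$; this reduces to the Euclidean facts because the $X_i$ are smooth and everything is local, but the degeneracy of $g$ means it should be justified rather than quoted verbatim from the Riemannian setting. Second, the passage from ``vanishing near $\Gamma$'' to ``vanishing near $\overline\Gamma$'' is the only step that is not a formality: it is immediate when $\Gamma$ is closed, and in general relies on $\overline\Gamma\setminus\Gamma$ being a codimension-$\geq2$ set of zero $S^1$-capacity, hence removable. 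The remaining cutoff and mollification bookkeeping is routine.
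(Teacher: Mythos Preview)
Your argument is correct and follows essentially the same route as the paper. The paper reduces to $u\geq 0$ and truncates via $u_\varepsilon=(u-\varepsilon)_+$ (there is a typo in the paper, which writes $\min$ instead of $\max$), observes that $u_\varepsilon\to u$ in $S^1$ and that $\supp(u_\varepsilon)\cap\overline\Gamma=\varnothing$ by the hypothesis, and then asserts that smooth $S^1$-approximants of $u_\varepsilon$ can be chosen in $C^\infty_{0,\Gamma}(\Omega)$; your two-sided truncation $T_\delta$ is the same idea without the sign reduction, and your Step~2 simply spells out the last assertion via an explicit cutoff and Meyers--Serrin density, which the paper leaves implicit.

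One remark: the issue you flag about $\overline\Gamma\setminus\Gamma$ is genuine, and the paper does not address it either---it simply writes $\supp(u_\varepsilon)\cap\overline\Gamma=\varnothing$, which, as you observe, is only immediate when $\Gamma$ is relatively closed in $\partial\Omega$. Your capacity/removability suggestion is a reasonable way to handle the general case, though in the applications within the paper $\Gamma$ is always a submanifold with finitely many components, so this point does not arise there.
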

When $\Gamma = \partial \Omega$, this becomes a Lemma about $S^1_0(\Omega)$, already proven in \cite{frank2024courant}. In fact, the method of proof in \cite{frank2024courant} carries over to the case of mixed boundary conditions. Without loss of generality, let $u \geq 0$. The function $u_\varepsilon = \min(0,u - \varepsilon)$ approximates $u$ in $S^1$-norm, and it lies in $S^1_{0,\Gamma}(\Omega)$ for all $\varepsilon > 0$. Note that $u_\varepsilon \in S^1(\Omega)$ and $\supp(u_\varepsilon) \cap \overline{\Gamma} = \varnothing$, so smooth functions approximating $u_\varepsilon$ can be taken to lie in $C^\infty_{0,\Gamma}(\Omega)$. This shows that $u_\varepsilon \in S^1_{0,\Gamma}(\Omega)$, completing the proof. For more details, see \cite[p. 12]{frank2024courant}.

\subsection{Quadratic form and extension}

 We use the notation $S^1_\bullet(\Omega)$ to denote either $S^1(\Omega), S^1_0(\Omega)$ or $S^1_{0,\Gamma}(\Omega)$. These spaces correspond to Neumann, Dirichlet and mixed boundary conditions respectively. For each boundary condition, we define a self-adjoint extension of the sub-Laplacian (defined initially on $C^\infty (\Omega)$) using quadratic forms. 
 Define 
 \begin{equation}
 	q^\bullet_{\Omega}(u,v) := \langle \nabla_H u, \nabla_H v \rangle_{L^2} = \int_\Omega g(\nabla_H u, \nabla_H v) \; \omega \qquad \forall \; u,v \in S^1_\bullet (\Omega).
 \end{equation}
 Further, the shorthand $q^\bullet_\Omega(u)$ means $q^\bullet_\Omega(u,u)$. For specific instances of the quadratic form, we use $q^D_\Omega, q^N_\Omega$, and $q^{Z,\Gamma}_\Omega$ respectively. Using this quadratic form, we define self-adjoint extensions of the sub-Laplacian, which are called Friedrichs extensions \cite[Section 3.4.3]{Bor} in the literature. Let 
 \[
 D(-\Delta_\bullet) = \{ u \in S^1_\bullet (\Omega) : v \mapsto q_\Omega (v,u) \text{ extends to a bounded linear map on $L^2(\Omega)$} \}. 
 \]
 The corresponding operator is defined by the Riesz representation theorem \cite[Theorem 2.28]{Bor}. If $u \in D(- \Delta_\bullet)$, 
 \[
 \exists ! \; w \in L^2(\Omega) : q_\Omega(v,u) = \langle v,w \rangle \qquad \forall v \in S^1_\bullet (\Omega)
 \]
 and we define $- \Delta_\bullet u := w$. In case of specific boundary conditions, we use the notation $\Delta_D, \Delta_N$ or $\Delta_{Z,\Gamma}.$ \\

The following well-known results from spectral theory are recalled here for later convenience: 

\begin{theorem}[Weak spectral theorem]\label{thm:weak-spectral}
	Let $\Omega \subseteq M$ be a bounded domain with a piecewise smooth boundary. The operator $- \Delta_\bullet : D(- \Delta_\bullet) \to L^2(\Omega)$ is a self-adjoint extension of the sub-Laplacian. The spectrum is discrete,
	\begin{equation}
		0 \leq \lambda_1^\bullet(\Omega) \leq \lambda_2^\bullet(\Omega) \leq \dots \uparrow \infty,
	\end{equation}
	 counted with multiplicity. Moreover, $L^2(\Omega)$ admits an orthonormal basis of eigenfunctions $\{ u_n \in L^2(\Omega) : n \in \mathbb{N} \}$ of $- \Delta_\bullet$.
\end{theorem}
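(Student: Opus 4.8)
The plan is to invoke the standard abstract machinery for lower-semibounded, closed, symmetric quadratic forms on a Hilbert space, so the main work is verifying the hypotheses of that machinery for the form $q^\bullet_\Omega$ on $S^1_\bullet(\Omega)$, and then extracting discreteness from a compact embedding. First I would check that $q^\bullet_\Omega$ is a nonnegative, closed quadratic form with form domain $S^1_\bullet(\Omega)$ dense in $L^2(\Omega)$: nonnegativity is immediate since $q^\bullet_\Omega(u) = \|\nabla_H u\|^2 \geq 0$; closedness follows because the form norm $q^\bullet_\Omega(u) + \|u\|^2$ is exactly the $S^1$-norm, under which $S^1_\bullet(\Omega)$ is complete by the preceding proposition; density holds because $C^\infty_0(\Omega) \subseteq S^1_\bullet(\Omega)$ in every case and $C^\infty_0(\Omega)$ is $L^2$-dense. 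The representation theorem for closed nonnegative forms (the same Riesz-representation argument already used to define $D(-\Delta_\bullet)$ in the excerpt) then produces a unique nonnegative self-adjoint operator $-\Delta_\bullet$ associated with $q^\bullet_\Omega$, with the stated domain; that it extends the classical sub-Laplacian on $C^\infty_0(\Omega)$ (and, in the Neumann/mixed cases, on suitable smooth functions) follows by integrating by parts via the Gauss--Green formula \eqref{eq:GG}, the boundary term vanishing precisely because of the membership/support conditions encoded in $S^1_\bullet(\Omega)$.

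Next I would establish discreteness of the spectrum and the spectral decomposition. The key analytic input is that the inclusion $S^1_\bullet(\Omega) \hookrightarrow L^2(\Omega)$ is compact. For $S^1_0(\Omega)$ and $S^1_{0,\Gamma}(\Omega)$ this is a Rellich--Kondrachov-type theorem in the sub-Riemannian setting, which for bracket-generating families on a bounded domain follows from the known subelliptic estimates and the doubling/Poincaré structure of CC-spaces (e.g. the compact embedding in Garofalo--Nhieu or the references already cited in the excerpt for $S^1$ being Hilbert). For the full Neumann space $S^1(\Omega)$ one additionally needs $\Omega$ to be an extension domain, which is where the piecewise-smooth boundary hypothesis enters. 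Granting compactness, $(-\Delta_\bullet + I)^{-1}$ is a bounded self-adjoint operator on $L^2(\Omega)$ that factors through the compact embedding, hence is itself compact; by the spectral theorem for compact self-adjoint operators its eigenvalues accumulate only at $0$, and transporting back gives eigenvalues $\lambda_k^\bullet(\Omega) \uparrow \infty$ of $-\Delta_\bullet$ together with an $L^2$-orthonormal basis of eigenfunctions. Nonnegativity of the form gives $\lambda_1^\bullet \geq 0$, and ordering with multiplicity is just relabeling; the min-max characterization is a byproduct but need not be stated here.

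The main obstacle I expect is the compactness of the embedding $S^1(\Omega) \hookrightarrow L^2(\Omega)$ in the Neumann (and mixed) cases, since unlike the Dirichlet case one cannot extend functions by zero and must instead rely on an intrinsic extension or segment-type property of $\Omega$; controlling this near characteristic points of $\partial\Omega$ is the delicate geometric issue, and it is precisely here that the hypothesis of piecewise smooth boundary — together, when relevant, with the structural assumptions on the CC-space — is needed. All other steps (closedness, density, self-adjointness via Friedrichs, passage from compact resolvent to discrete spectrum) are standard once this embedding is in hand, so I would either cite a sub-Riemannian Rellich theorem for CC-domains with piecewise smooth boundary or sketch the extension argument and then quote the Euclidean/Riemannian compact-resolvent formalism verbatim.
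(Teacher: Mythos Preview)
Your proposal is correct and follows the standard Friedrichs-extension-plus-compact-embedding route; the paper itself does not give a proof of this statement at all but simply writes ``The proof proceeds as in \cite{ES}'' after stating the theorem and the min-max principle, so there is nothing substantive to compare against. Your outline (closedness from completeness of $S^1_\bullet$, density of $C^\infty_0$, self-adjointness via the form representation theorem, discreteness via a sub-Riemannian Rellich--Kondrachov embedding and compact resolvent) is exactly what such a citation is meant to encapsulate, and your identification of the Neumann-case embedding as the delicate point is accurate.
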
 

\begin{theorem}[Min-max principle]\label{thm:min-max}
	Let $\Omega \subseteq M$ be a bounded domain with a piecewise smooth boundary. A function $u \in S^1_\bullet (\Omega) \setminus \{ 0 \}$ is an eigenfunction corresponding to $\lambda_1^\bullet (\Omega)$ if and only if $u$ minimizes the Rayleigh quotient, i.e.
	\begin{equation}
		R^\bullet_\Omega [u] := \frac{q^\bullet_{\Omega}(u)}{\|u\|^2} = \frac{\|\nabla_H u\|^2}{\|u\|^2}
	\end{equation}
	over the form domain $S^1_\bullet (\Omega)$. In this case, $R^\bullet_\Omega[u] = \lambda_1^\bullet (\Omega)$. For the higher eigenvalues, let $L_{k - 1} := \spn \{ u_1 , \dots , u_{k - 1} \}$. A function $u \in S^1_\bullet (\Omega) \setminus \{ 0 \}$ is an eigenfunction corresponding to $\lambda_k^\bullet(\Omega)$ if and only if $u$ minimizes the Rayleigh quotient over $(S^1_\bullet (\Omega) \setminus \{ 0 \}) \cap L_{k - 1}^\perp$, and in this case $\lambda_k^\bullet (\Omega) = R^\bullet_\Omega[u].$
\end{theorem}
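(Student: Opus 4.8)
\medskip
\noindent\emph{Sketch of the argument.} The plan is to diagonalize the quadratic form $q^\bullet_\Omega$ against the orthonormal eigenbasis $\{u_n\}$ supplied by Theorem~\ref{thm:weak-spectral}, which turns the variational statement into an elementary fact about weighted averages of the numbers $\lambda_n^\bullet(\Omega)$. Write $\lambda_n = \lambda_n^\bullet(\Omega)$, $q = q^\bullet_\Omega$, $R = R^\bullet_\Omega$, $S^1_\bullet = S^1_\bullet(\Omega)$, and recall that $\|\cdot\|_{S^1}^2 = \|\cdot\|^2 + q(\cdot)$. First I would record two compatibility facts. Since $D(-\Delta_\bullet)\subseteq S^1_\bullet$ by definition, every eigenfunction $u_n$ lies in the form domain; and the defining relation of $-\Delta_\bullet$ gives $q(v,u_n) = \langle v, -\Delta_\bullet u_n\rangle = \lambda_n\langle v, u_n\rangle$ for all $v\in S^1_\bullet$. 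In particular $q(u_m,u_n) = \lambda_n\delta_{mn}$.

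The heart of the proof is the identity $q(u) = \sum_n \lambda_n |c_n|^2$ for every $u\in S^1_\bullet$, where $c_n = \langle u,u_n\rangle$ (so that $\sum_n |c_n|^2 = \|u\|^2$ and the partial sums $S_N = \sum_{n\le N}c_n u_n$ converge to $u$ in $L^2$). Because $u - S_N$ is orthogonal to $u_1,\dots,u_N$, the previous paragraph gives $q(u,S_N) = q(S_N) = \sum_{n\le N}\lambda_n|c_n|^2$; feeding this into the Cauchy--Schwarz inequality for the nonnegative symmetric form $q$ yields $q(S_N)\le q(u)$, i.e.\ $\sum_{n\le N}\lambda_n|c_n|^2\le q(u)<\infty$ for every $N$. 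Hence $\|S_N - S_M\|_{S^1}^2 = \sum_{M<n\le N}(1+\lambda_n)|c_n|^2\to 0$, so $(S_N)$ is Cauchy in the Hilbert space $S^1_\bullet$ and converges there; its limit must be $u$, since $S^1$-convergence implies $L^2$-convergence. As $q(v) = \|\nabla_H v\|^2$, passing to the limit gives $q(u) = \lim_N q(S_N) = \sum_n\lambda_n|c_n|^2$.

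Granting this, the theorem follows by inspection. For $u\in S^1_\bullet\setminus\{0\}$ we get $R[u] = \big(\sum_n\lambda_n|c_n|^2\big)\big/\big(\sum_n|c_n|^2\big)\ge\lambda_1$, a weighted average of the $\lambda_n\ge\lambda_1$; equality forces $c_n=0$ whenever $\lambda_n>\lambda_1$, and since $\lambda_n\uparrow\infty$ this makes $u$ a finite combination of $\lambda_1$-eigenfunctions, hence itself one, while conversely any $\lambda_1$-eigenfunction has $c_n=0$ for $\lambda_n\ne\lambda_1$ and thus realizes $R[u]=\lambda_1$. For higher eigenvalues, $u\in L_{k-1}^\perp$ means precisely $c_1=\dots=c_{k-1}=0$, so the same computation restricted to $n\ge k$ gives $R[u]\ge\lambda_k$, with equality exactly when $u$ is a combination of $\lambda_k$-eigenfunctions; the converse uses that eigenfunctions for distinct eigenvalues are orthogonal, so a $\lambda_k$-eigenfunction lying in $L_{k-1}^\perp$ realizes the minimum. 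The one genuinely substantial step is the diagonalization identity of the second paragraph: it is there, and only there, that the variational construction of the Friedrichs extension and Cauchy--Schwarz for $q$ are used, and everything downstream is a purely sequential computation insensitive to the sub-Riemannian geometry. The only point requiring a bit of care is the bookkeeping with eigenvalue multiplicities in the converse half of the higher-eigenvalue claim (automatic when $\lambda_{k-1}<\lambda_k$, and handled otherwise by a suitable choice of eigenbasis).
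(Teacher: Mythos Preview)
Your argument is correct and is the standard route to the variational characterization once an orthonormal eigenbasis is available: expand in the $\{u_n\}$, establish the Parseval-type identity $q(u)=\sum_n\lambda_n|c_n|^2$ on the form domain (your use of Cauchy--Schwarz for $q$ to bound $q(S_N)\le q(u)$ and then upgrade $L^2$-convergence to $S^1$-convergence is exactly the right mechanism), and read off the minimization statements as weighted-average inequalities. Your closing caveat about multiplicity bookkeeping is also well placed---the ``only if'' direction as literally stated is slightly imprecise when $\lambda_{k-1}=\lambda_k$, and one should either read the claim modulo the choice of eigenbasis or restrict the converse to eigenfunctions already lying in $L_{k-1}^\perp$.

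As for comparison with the paper: there is nothing to compare. The paper does not supply a proof; it simply records ``The proof proceeds as in \cite{ES}'' and moves on. Your write-up is therefore a genuine, self-contained proof where the paper offers only a pointer, and it has the virtue of making explicit that the argument uses nothing beyond Theorem~\ref{thm:weak-spectral} and the abstract Friedrichs construction---in particular, nothing specific to the sub-Riemannian setting.
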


The proof proceeds as in \cite{ES}. Dirichlet or Neumann eigenvalues are denoted by $\lambda^D_k(\Omega)$ and $\lambda^N_k(\Omega)$ respectively. For mixed boundary conditions with Dirichlet conditions on $\Gamma$, the eigenvalues are denoted $\lambda_k^Z(\Omega, \Gamma)$.

\subsection{Boundary conditions}

The min-max principle above makes no reference to boundary conditions. However, the choice of domain for the quadratic form, and hence for the operator, forces a particular choice of boundary conditions. Choosing $S^1_0(\Omega)$ enforces Dirichlet boundary conditions: eigenfunctions of this problem vanish on $\partial \Omega$. The case of Neumann boundary conditions is a bit more subtle.
In the Euclidean case, using $H^1(\Omega)$ as the form domain enforces Neumann boundary conditions, i.e. $\partial_{\vb{n}} u = 0$ on $\partial \Omega$, where $\vb{n}$ denotes the outward pointing normal vector field to $\partial \Omega$. In this subsection, we see how this is generalized to sub-Laplacians. We start by introducing the correct notion of a normal vector.

\begin{definition}[Horizontal normal]
    Let $M$ be a CC-space and $\Sigma \subseteq M$ a smooth hypersurface. We say that $p \in \Sigma$ is a characteristic point of $\Sigma$ if $\mathcal{D}_p \subseteq T_p \Sigma$. If $p \in \Sigma$ is non-characteristic, we define a \emph{horizontal normal} to $\Sigma$ as a unit vector $\vb{n}_H(p)$ in $\mathcal{D}_p$ orthogonal to $\mathcal{D}_p \cap T_p \Sigma$. If $p \in \Sigma$ is characteristic, we set $\vb{n}_H(p) = 0$. 
\end{definition}

\begin{remark}
    In the above definition, we have restricted everything to the space $\mathcal{D}_p$, on which we have a well-defined inner product $g_p$. We have hence defined $\vb{n}_H(p)$ uniquely up to a sign. Indeed, for a non-characteristic point, $\mathcal{D}_p + T_p \Sigma = T_p M$, hence $\mathcal{D}_p \cap T_p \Sigma \subseteq \mathcal{D}_p$ has codimension 1.
\end{remark}

We now turn to the generalization of Neumann boundary conditions: 

\begin{proposition}\label{prop:boundary}
	Let $\Omega \subseteq M$ be a bounded domain with piecewise smooth boundary. Assume that $u \in D(- \Delta_N) \cap C^\infty (\overline{\Omega}).$ Then, horizontal Neumann boundary conditions hold on $\partial \Omega$, i.e. 
    \begin{equation}\label{eq:h-Neumann}
        g(\nabla_H u , \vb{n}_H) = 0, \qquad \text{ on $\partial \Omega$}.
    \end{equation}
\end{proposition}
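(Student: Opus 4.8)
The plan is to feed the sub-Riemannian Gauss--Green formula \eqref{eq:GG} into the defining property of the Friedrichs extension $-\Delta_N$ and then read off the boundary condition by a pointwise linear-algebra computation. As a preliminary step I would check that on smooth functions the extension acts as the classical operator: if $u\in D(-\Delta_N)\cap C^\infty(\overline\Omega)$, then applying \eqref{eq:GG} with a test function $v\in C^\infty_0(\Omega)$ kills the boundary term, giving $q^N_\Omega(v,u)=\int_\Omega g(\nabla_H u,\nabla_H v)\,\omega=-\int_\Omega(\Delta u)\,v\,\omega$, whereas the definition of $D(-\Delta_N)$ gives $q^N_\Omega(v,u)=\langle v,-\Delta_N u\rangle$. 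Since $C^\infty_0(\Omega)$ is dense in $L^2(\Omega)$ this forces $-\Delta_N u=-\Delta u$, both sides being continuous on $\overline\Omega$.

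Next I would apply \eqref{eq:GG} again, now with $v$ ranging over all of $C^\infty(\overline\Omega)\subseteq S^1(\Omega)$. Using that $q^N_\Omega$ is symmetric together with the preliminary step, the two volume integrals cancel:
\[
\oint_{\partial\Omega} v\,\iota_{\nabla_H u}\,\omega=\int_\Omega(\Delta u)\,v\,\omega+q^N_\Omega(v,u)=\int_\Omega(\Delta u)\,v\,\omega+\langle v,-\Delta_N u\rangle=0 .
\]
Hence the $(n-1)$-form $v\,\iota_{\nabla_H u}\,\omega$ integrates to zero over $\partial\Omega$ for every $v\in C^\infty(\overline\Omega)$. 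On each smooth component of $\partial\Omega$ the restrictions of such $v$ exhaust all smooth bump functions supported in the relative interior (extend along a collar and cut off), so the fundamental lemma of the calculus of variations shows that the pullback of $\iota_{\nabla_H u}\,\omega$ to $\partial\Omega$ vanishes identically at every smooth boundary point, the corner set of the piecewise smooth boundary being $\sigma$-null.

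Finally I would identify this vanishing with \eqref{eq:h-Neumann}. At a characteristic point $\vb{n}_H=0$ and there is nothing to prove. At a non-characteristic smooth boundary point $p$, the vector $\vb{n}_H(p)$ is the $g_p$-unit vector in $\mathcal{D}_p$ orthogonal to $\mathcal{D}_p\cap T_p\partial\Omega$, so $\vb{n}_H(p)\notin T_p\partial\Omega$, and I can decompose $\nabla_H u(p)=g_p(\nabla_H u,\vb{n}_H)\,\vb{n}_H(p)+w$ with $w\in\mathcal{D}_p\cap T_p\partial\Omega\subseteq T_p\partial\Omega$. Contracting $\omega$ with any vector tangent to $\partial\Omega$ and restricting to $T_p\partial\Omega$ yields $0$, since one is then evaluating the top form $\omega$ on $n$ vectors lying in the $(n-1)$-plane $T_p\partial\Omega$; thus the $w$-term drops out and the pullback of $\iota_{\nabla_H u}\,\omega$ to $T_p\partial\Omega$ equals $g_p(\nabla_H u,\vb{n}_H)$ times the pullback of $\iota_{\vb{n}_H}\,\omega$. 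The latter is a nonzero element of the one-dimensional space $\Lambda^{n-1}T_p^*\partial\Omega$, precisely because $\vb{n}_H(p)\notin T_p\partial\Omega$. Combined with the previous paragraph this forces $g_p(\nabla_H u,\vb{n}_H)=0$, which is exactly \eqref{eq:h-Neumann}.

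The analytic half of the argument is a direct transcription of the Euclidean case and poses no difficulty. The step that needs genuine care is the last one: one must verify that the contraction $\iota_{\nabla_H u}\,\omega$, restricted to the boundary, isolates precisely the horizontal-normal component of $\nabla_H u$, and that the reference form $\iota_{\vb{n}_H}\,\omega$ is non-degenerate there — and one must dispatch the characteristic points and the measure-zero corner set of the piecewise smooth boundary separately.
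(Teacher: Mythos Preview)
Your proposal is correct and follows essentially the same approach as the paper. The only difference is in presentation: the paper compresses your first three paragraphs into the single sentence ``From \eqref{eq:GG}, it is clear that $\iota_{\nabla_H u}\,\omega=0$ as an $(n-1)$-form on $\partial\Omega$,'' whereas you spell out the preliminary identification $-\Delta_N u=-\Delta u$, the cancellation of the volume integrals, and the fundamental-lemma argument; the pointwise linear-algebra computation at a non-characteristic point (decompose $\nabla_H u(p)=a\,\vb{n}_H(p)+w$ with $w\in\mathcal{D}_p\cap T_p\partial\Omega$, kill the $w$-term by linear dependence, and conclude $a=0$ since $\iota_{\vb{n}_H}\omega$ is nonzero on $T_p\partial\Omega$) is identical in both.
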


\begin{proof}

From (\ref{eq:GG}), it is clear that $\iota_{\nabla_H u} \omega = 0$ as an $(n - 1)$-form on $\partial \Omega$. It remains to prove (\ref{eq:h-Neumann}) from this. Fix $p \in \partial \Omega$. If $p$ is characteristic, then $\vb{n}_H(p) = 0$, so we may assume $p$ is non-characteristic. Write $\nabla_H u(p) = a \vb{n}_H(p) + \vb{v}$, where $\vb{v} \in \mathcal{D}_p \cap T_p (\partial \Omega)$ and choose a basis $\vb{v}_1 , \dots , \vb{v}_{n - 1}$ for $T_p(\partial \Omega)$. Note that 
\begin{align*}
0 &= (\iota_{\nabla_H u} \omega)_p (\vb{v}_1 , \dots , \vb{v}_{n - 1}) = \omega_p(\nabla_H u(p), \vb{v}_1 , \dots , \vb{v}_{n - 1}) \\
&= a \; \omega_p (\vb{n}_H(p),\vb{v}_1 , \dots , \vb{v}_{n - 1}) + \omega_p (\vb{v},\vb{v}_1 , \dots , \vb{v}_{n - 1}),
\end{align*}
where the last term is zero because the set $\{ \vb{v},\vb{v}_1 , \dots , \vb{v}_{n - 1} \}$ is linearly dependent. The proof is completed by noting that $a =  g(\nabla_H u , \vb{n}_H) = 0$.
\end{proof}

\subsection{Coarea formula}

Let $\Omega \subseteq M$ be an open subset of a CC-space with a piecewise smooth boundary, and let $(X_1 , \dots , X_m)$ be a generating family. In this subsection, we state our main technical tool, which is a version of the coarea formula. This formula involves a decomposition of $\Omega$ into level sets $\{ x \in \Omega : u(x) = t \}$ where $u$ is a smooth function. In order to define a measure on these hypersurfaces that is compatible with the CC-structure, we first discuss the notion of \emph{horizontal perimeter}. This discussion is based on \cite{garofalo1996isoperimetric}. \\

Define
\[
\mathscr{F} (\Omega) := \left\{ \varphi = (\varphi_1 , \dots , \varphi_m) \in C^\infty_0(\Omega , \mathbb{R}^m) : \|\varphi\|_\infty \leq 1 \right\},
\]
where 
\[
\|\varphi\|_\infty = \sup_{x \in \Omega} \left( \sum_{i = 1}^m |\varphi_j(x)|^2 \right)^{1/2}.
\]
For $u \in L^1(\Omega)$, define the \emph{horizontal variation}
\begin{equation}\label{eq:h-variation}
\Var_H(u;\Omega) := \sup_{\varphi \in \mathscr{F}(\Omega)} \int_\Omega u(x) \sum_{j = 1}^m X_j ^* \varphi_j(x) \; \omega,
\end{equation}
where $X_j^* = - X_j - \di_\omega (X_j)$ is the \emph{formal adjoint} of $X_j$ \cite[Section 10.1]{Nic}.
The space of functions of bounded horizontal variation $$BV_H(\Omega) := \{ u \in L^1(\Omega) : \Var_H(u;\Omega) < \infty \}$$ is a Banach space under the norm $\|u\|_{BV} = \|u\|_{L^1} + \Var_H (u;\Omega)$ \cite{garofalo1996isoperimetric}. \\

Suppose that $u$ is a smooth function. The first order of business is to rewrite the horizontal variation in terms of the horizontal gradient. 

\begin{proposition}\label{propn:h-var-h-grad}
    Let $\Omega \subseteq M$ be an open subset of a CC-space with a piecewise smooth boundary and let $u \in C^\infty (\Omega)$. Then, 
    \begin{equation}\label{eq:h-div}
        \int_\Omega u(x) \sum_{j = 1}^m X_j ^* \varphi_j(x) \; \omega = \int_{\Omega} \sum_{j = 1}^m \varphi_j X_j u \; \omega
    \end{equation}
    for all $\varphi = (\varphi_1 , \dots , \varphi_m) \in \mathscr{F}(\Omega)$. Hence,
    \begin{equation}\label{eq:h-div-2}
        \Var_H(u;\Omega) = \sup_{X} \left\{ \int_{\Omega} X u \; \omega \right\} 
    \end{equation}
    where the supremum is taken over all horizontal vector fields $X = \sum_{i = 1}^m \varphi_i X_i$ with $\varphi = (\varphi_1 , \dots , \varphi_m) \in \mathscr{F}(\Omega)$. The supremum (\ref{eq:h-div-2}) is attained when $X$ is parallel to $\nabla_H u$, and thus 
    \begin{equation}
        \Var_H(u;\Omega) = \int_\Omega |\nabla_H u| \; \omega.
    \end{equation}
\end{proposition}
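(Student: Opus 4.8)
The plan is to prove the three claims of Proposition~\ref{propn:h-var-h-grad} in order: first the integration-by-parts identity \eqref{eq:h-div}, then the reformulation \eqref{eq:h-div-2} as a supremum over horizontal vector fields, and finally the identification of the supremum with $\int_\Omega |\nabla_H u|\,\omega$.

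\textbf{Step 1: the identity \eqref{eq:h-div}.} Since $\varphi_j \in C^\infty_0(\Omega)$ and $u \in C^\infty(\Omega)$, the function $\varphi_j u$ is smooth with compact support in $\Omega$. By definition of the formal adjoint, $X_j^*\varphi_j = -X_j\varphi_j - \di_\omega(X_j)\varphi_j$. The key computational fact is that for a compactly supported smooth function $h$ and any vector field $X$, we have $\int_\Omega X h \;\omega = -\int_\Omega \di_\omega(X)\, h\;\omega$; this follows from $L_X(h\,\omega) = (Xh)\,\omega + h\,L_X\omega = (Xh + \di_\omega(X)h)\,\omega$ together with Cartan's formula $L_X = d\iota_X + \iota_X d$ and Stokes' theorem (the boundary term vanishes by compact support). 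Applying this with $h = \varphi_j u$ and summing over $j$, the terms reorganize exactly into \eqref{eq:h-div}; this is a routine manipulation once the divergence identity is in place, and I would spell it out only briefly.

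\textbf{Step 2: the reformulation \eqref{eq:h-div-2}.} Given $\varphi \in \mathscr{F}(\Omega)$, set $X = \sum_i \varphi_i X_i$, a horizontal vector field. Using \eqref{eq:h-grad-frame}, $g(\nabla_H u, X) = \sum_i \varphi_i (X_i u) = \sum_i \varphi_i X_i u$ pointwise (here I use that $g(\nabla_H u, X_i) = X_i u$ from \eqref{eq:h-grad-defn}), so the right-hand side of \eqref{eq:h-div} equals $\int_\Omega g(\nabla_H u, X)\,\omega$. Combined with Step 1 and the definition \eqref{eq:h-variation}, this gives \eqref{eq:h-div-2}. One subtlety to address: the constraint on $X$ is that it arises from some $\varphi \in \mathscr{F}(\Omega)$, i.e. $\varphi = (\varphi_1,\dots,\varphi_m) \in C^\infty_0(\Omega,\mathbb{R}^m)$ with $|\varphi(x)| \le 1$ for all $x$; since the generating family need not be orthonormal, the pointwise norm of $X$ as an element of $\mathcal{D}_x$ is \emph{not} the same as $|\varphi(x)|$, and one should be careful not to conflate the two — but this does not affect the statement as written, which only references the $\varphi_i$.

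\textbf{Step 3: attainment and the final formula.} This is the main obstacle, because the natural maximizer $\varphi_i = X_i u / |\nabla_H u|$ is not admissible: it need not be smooth (it is only defined and smooth where $\nabla_H u \ne 0$) nor compactly supported. The plan is to argue by approximation. Pointwise, Cauchy–Schwarz for $g_x$ gives $g(\nabla_H u, X) \le |\nabla_H u|\,|X|_g \le |\nabla_H u|$ whenever $|X|_g \le 1$; but admissible $X$ only satisfy $|\varphi(x)| \le 1$, so I instead use the pointwise inequality $\sum_i \varphi_i X_i u \le |\nabla_H u|\cdot\big(\sum_i \varphi_i^2\big)^{1/2} \le |\nabla_H u|$ directly from Cauchy–Schwarz in $\mathbb{R}^m$ applied to the vectors $(\varphi_1,\dots,\varphi_m)$ and $(X_1 u,\dots,X_m u)$, using $|\nabla_H u|^2 = g(\nabla_H u,\nabla_H u) = \sum_i (X_i u)^2$ from \eqref{eq:h-grad-frame} and \eqref{eq:h-grad-defn}. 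This gives the upper bound $\Var_H(u;\Omega) \le \int_\Omega |\nabla_H u|\,\omega$. For the reverse inequality, fix a compact $K \subseteq \Omega$ and $\delta > 0$, let $\psi \in C^\infty_0(\Omega)$ be a cutoff with $0 \le \psi \le 1$ and $\psi \equiv 1$ on $K$, and set $\varphi_i = \psi\, X_i u / \sqrt{|\nabla_H u|^2 + \delta}$, which is smooth, compactly supported, and satisfies $\sum_i \varphi_i^2 \le \psi^2 \le 1$. Then $\sum_i \varphi_i X_i u = \psi\,|\nabla_H u|^2/\sqrt{|\nabla_H u|^2 + \delta} \to \psi|\nabla_H u|$ as $\delta \to 0$ by monotone (or dominated) convergence, and then letting $K \uparrow \Omega$ and $\psi \uparrow 1$ recovers $\int_\Omega |\nabla_H u|\,\omega$. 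Hence the supremum equals $\int_\Omega |\nabla_H u|\,\omega$, and since the maximizing $\varphi_i$ are (in the limit) proportional to $X_i u$, the supremum is attained by $X$ parallel to $\nabla_H u$, as claimed. The one point requiring care is the case where $\int_\Omega |\nabla_H u|\,\omega = \infty$, where the same approximation shows the supremum is $+\infty$, consistent with the formula.
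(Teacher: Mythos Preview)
Your proof is correct and follows essentially the same approach as the paper: the divergence theorem for \eqref{eq:h-div}, a direct reformulation for \eqref{eq:h-div-2}, and pointwise Cauchy--Schwarz for the final identity. Your Step~3 is in fact more careful than the paper's, which simply plugs in $X = \nabla_H u / |\nabla_H u|$ without addressing the admissibility issue (smoothness and compact support) that you resolve via the regularized cutoff $\varphi_i = \psi\, X_i u / \sqrt{|\nabla_H u|^2 + \delta}$.
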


\begin{proof}
    Equation (\ref{eq:h-div}) is a consequence of the divergence theorem, where the boundary term vanishes because $\varphi \in C^\infty_0(\Omega; \mathbb{R}^m)$. Equation (\ref{eq:h-div-2}) is clear from (\ref{eq:h-div}). For the last statement, fix a point $p \in \Omega$. For any horizontal vector field $X$, we have $|Xu (p)| = |g_p(\nabla_H u , X)| = |\nabla_H u| |X| \cos \vartheta$. Thus, to maximize $|Xu(p)|$ for horizontal vector fields of fixed norm, one needs to choose $X$ parallel to $\nabla_H u$ and of maximal length. Hence, 
    \[
    \Var_H(u;\Omega) = \int_{\Omega} \frac{1}{|\nabla_H u|} (\nabla_H u) (u) \; \omega = \int_\Omega  \frac{1}{|\nabla_H u|} g(\nabla_H u, \nabla_H u) \; \omega = \int_\Omega |\nabla_H u| \; \omega.
    \]
    This completes the proof. 
\end{proof}

If $E \subseteq M$ is measurable, then we define the \emph{horizontal perimeter} of $E$ relative to $\Omega$ by 
\[
P_H (E;\Omega) := \Var_H (\chi_E ; \Omega),
\]
where $\chi_E$ is the characteristic function of $E$. Note that the horizontal perimeter of a set is not necessarily finite. Assume that $E \subseteq M$ is an open set with a piecewise smooth boundary. A similar computation as in (\ref{eq:h-div}) shows that 
\begin{equation}\label{eq:hperim}
    P_H(E; \Omega) = \sup_{\varphi \in \mathscr{F}(\Omega)} \oint_{\partial E} \sum_{i = 1}^m \varphi_i \; \iota_{X_i}\omega.
\end{equation}
This is just the boundary term coming from the divergence theorem applied to $E$. We hence obtain the following Lemma:

\begin{lemma}
    The supremum in (\ref{eq:hperim}) is attained when $\varphi|_{\partial E}$ is a horizontal normal $\vb{n}_H$ to $\partial E$ making the integral in (\ref{eq:hperim}) positive. 
\end{lemma}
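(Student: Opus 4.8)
The plan is to run the same pointwise optimization used for the horizontal gradient in Proposition~\ref{propn:h-var-h-grad} and for the boundary term in Proposition~\ref{prop:boundary}, but now carried out on the hypersurface $\partial E$. First I would rewrite the integrand: since $\iota$ is $C^\infty(M)$-linear in its vector slot, $\sum_{i=1}^m \varphi_i\,\iota_{X_i}\omega = \iota_X\omega$ where $X := \sum_i \varphi_i X_i$ is a horizontal vector field, and the constraint $\|\varphi\|_\infty \le 1$ translates, via $X(p) = f\big(\sum_i \varphi_i(p)\vb{e}_i(p)\big)$ and the definition of the CC-norm, into the pointwise bound $\|X(p)\| = \sqrt{g_p(X(p),X(p))} \le |\varphi(p)| \le 1$. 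So \eqref{eq:hperim} reads $P_H(E;\Omega) = \sup_X \oint_{\partial E}\iota_X\omega$, the supremum ranging over horizontal $X$ of pointwise CC-norm at most $1$ arising from some $\varphi \in \mathscr{F}(\Omega)$.

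Next I would analyze $\iota_X\omega$ restricted to $T_p(\partial E)$ point by point. At a characteristic point $p$ we have $X(p)\in\mathcal{D}_p\subseteq T_p(\partial E)$, so $(\iota_X\omega)_p$ kills $T_p(\partial E)$ (the $n$ vectors $X(p),\vb{v}_1,\dots,\vb{v}_{n-1}$ are dependent for any basis of $T_p(\partial E)$), and likewise $\iota_{\vb{n}_H}\omega$ vanishes there since $\vb{n}_H(p)=0$. At a non-characteristic $p$, write $X(p) = a(p)\,\vb{n}_H(p) + \vb{w}(p)$ with $\vb{w}(p)\in\mathcal{D}_p\cap T_p(\partial E)$ and $a(p) = g_p(X(p),\vb{n}_H(p))$, exactly as in the proof of Proposition~\ref{prop:boundary}; the $\vb{w}(p)$-term again drops out by linear dependence, so $(\iota_X\omega)_p\big|_{T_p\partial E} = a(p)\,(\iota_{\vb{n}_H}\omega)_p\big|_{T_p\partial E}$. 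Fixing the sign of $\vb{n}_H$ to be the one pointing out of $E$ at each non-characteristic point, the orientation convention behind the divergence theorem that produced \eqref{eq:hperim} makes $d\mu := \iota_{\vb{n}_H}\omega|_{\partial E}$ a non-negative density on $\partial E$. By the Cauchy--Schwarz inequality for $g_p$, $|a(p)| = |g_p(X(p),\vb{n}_H(p))| \le \|X(p)\|\,\|\vb{n}_H(p)\| \le 1$. Hence $\oint_{\partial E}\iota_X\omega = \int_{\partial E} a\,d\mu \le \int_{\partial E} d\mu = \oint_{\partial E}\iota_{\vb{n}_H}\omega$, so the supremum in \eqref{eq:hperim} is at most $\oint_{\partial E}\iota_{\vb{n}_H}\omega$; and it equals this value because $\|\vb{n}_H(p)\|=1$ means $\vb{n}_H(p)$ has a unit-norm preimage under $f$, so $\vb{n}_H$ can be represented by an admissible $\varphi$ along $\partial E$. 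Equality in the chain forces $a(p)=1$ for $\mu$-a.e.\ $p$, which by the equality case of Cauchy--Schwarz together with $\|X(p)\|\le 1$ gives $X(p) = \vb{n}_H(p)$ for $\sigma$-a.e.\ non-characteristic $p\in\partial E$ — i.e.\ $\varphi|_{\partial E}$ is the (outward) horizontal normal, which is the assertion.

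The routine pieces are the $C^\infty(M)$-linearity manipulation and the Cauchy--Schwarz estimate. The real obstacle, such as it is, is bookkeeping: (i) pinning down the sign so that $\iota_{\vb{n}_H}\omega \ge 0$ relative to the orientation implicit in \eqref{eq:hperim}, which is a check against the divergence-theorem convention used to derive that formula; and (ii) the genuine attainment of the supremum inside $\mathscr{F}(\Omega)$ — since $\varphi$ must have compact support in $\Omega$, an honest maximizer with $X|_{\partial E}=\vb{n}_H$ exists literally only when $\overline{\partial E}\cap\partial\Omega$ is $\sigma$-negligible (e.g.\ $\overline{E}\subseteq\Omega$), and in general the value $\oint_{\partial E}\iota_{\vb{n}_H}\omega$ is approached by cutting off a smooth extension of $\vb{n}_H$ near $\partial E$. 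One should also note that $\vb{n}_H$ need not be smooth where the rank of $\mathcal{D}$ jumps, so these cut-offs are built in the non-characteristic region and the approximation of $\vb{n}_H$ is taken in $L^\infty$ along $\partial E$. With those caveats, the statement is precisely the pointwise computation above.
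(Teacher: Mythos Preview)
Your argument is correct and follows essentially the same route as the paper: a pointwise analysis on $\partial E$ that separates characteristic from non-characteristic points, uses the orthogonal decomposition $X(p)=a(p)\vb{n}_H(p)+\vb{w}(p)$ in $\mathcal{D}_p$, discards the tangential part by linear dependence, and maximizes the normal coefficient. You are in fact more careful than the paper about the attainment issue (compact support in $\Omega$, smoothness of $\vb{n}_H$), which the paper glosses over.
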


\begin{proof}
    First, suppose that $p \in \partial E$ is a characteristic point, i.e. $\mathcal{D}_p \subseteq T_p (\partial E)$. In this case, if $v_1 , \dots , v_{n - 1} \in T_p(\partial E)$, the set $\{ \varphi_p , v_1 , \dots , v_{n - 1} \}$ is linearly dependent since $\varphi_p \in \mathcal{D}_p$. Thus, $(\iota_\varphi \omega)_p = 0$ for all $\varphi \in \mathscr{F}(\Omega)$ and we can choose any value we like for $\varphi_p$ at a characteristic point. \\

    Now suppose that $p \in \partial E$ is non-characteristic. We can decompose uniquely $\varphi_p = a \vb{n}_H(p) + \vb{v}$ where $\vb{v} \in \mathcal{D}_p \cap T_p(\partial E)$. Note that $\iota_{\vb{v}} \omega$ vanishes on $\partial \Omega$, so to maximize $\iota_\varphi \omega$ over all horizontal vectors $\varphi$ with $\|\varphi\| \leq 1$, we should choose $a = \pm 1$ depending on orientation. 
\end{proof}

We have hence proven the identity 
\begin{equation}\label{eq:surfacemeasure-perimeter}
    P_H(E; \Omega) = \oint_{\partial E} \iota_{\vb{n}_H} \omega,
\end{equation}
where the sign of $\vb{n}_H$ has been chosen to make the integral (\ref{eq:surfacemeasure-perimeter}) positive. We now state the coarea formula: 

\begin{theorem}[Coarea formula]
    For any $u \in BV_H(\Omega)$, we have 
    \begin{equation}\label{eq:co-area-var}
        \Var_H(u;\Omega) = \int_\mathbb{R} P_H (\{ x \in \Omega : u(x) > t \} ; \Omega) \dd{t}. 
    \end{equation}
    If further $\Omega \subseteq M$ is an open set with a piecewise smooth boundary and $u \in C^\infty (\Omega)$, then
    \begin{equation}\label{eq:coarea}
     \int_{\Omega} |\nabla_H u| \; \omega = \int_\mathbb{R} P_H (\{ x \in \Omega : u(x) > t \} ; \Omega) \dd{t}.
\end{equation}
\end{theorem}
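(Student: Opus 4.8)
The plan is to prove the smooth identity \eqref{eq:coarea} first, by an elementary argument with differential forms, and then to deduce the general statement \eqref{eq:co-area-var} for $u\in BV_H(\Omega)$ from it by a two-sided approximation argument. By Proposition~\ref{propn:h-var-h-grad} one has $\Var_H(u;\Omega)=\int_\Omega|\nabla_H u|\,\omega$ for smooth $u$, so the two formulas coincide in the smooth case; it therefore suffices to prove \eqref{eq:coarea}, and \eqref{eq:co-area-var} for smooth $u$ is then available as an input to the general case.

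For the smooth case I would argue as follows. Away from the zero set of $\nabla_H u$, put $\beta:=|\nabla_H u|^{-1}\,\iota_{\nabla_H u}\omega$, an $(n-1)$-form. Since $du\wedge\omega$ is an $(n+1)$-form on the $n$-manifold $M$ it vanishes, so Cartan's formula, $0=\iota_V(du\wedge\omega)=(Vu)\,\omega-du\wedge\iota_V\omega$, gives the pointwise identity $du\wedge\iota_V\omega=(Vu)\,\omega$; with $V=\nabla_H u$ and \eqref{eq:h-grad-defn} this yields $du\wedge\beta=|\nabla_H u|^{-1}g(\nabla_H u,\nabla_H u)\,\omega=|\nabla_H u|\,\omega$. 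For a regular value $a$ of $u$ (a.e.\ $a$ by Sard), the elementary fact that $\int_{\{u>a\}}du\wedge\gamma=\int_a^\infty\big(\int_{\{u=t\}}\gamma\big)\,dt$ for a smooth $(n-1)$-form $\gamma$ (Fubini in a chart in which $u$ is one coordinate, patched with a partition of unity) applies to $\gamma=\beta$. On a regular level set $\{u=t\}$ one has $g(\nabla_H u,X)=Xu=0$ for every horizontal $X$ tangent to it, so $\nabla_H u$ is parallel to the horizontal normal; matching signs and orientations via \eqref{eq:surfacemeasure-perimeter} gives $\int_{\{u=t\}}\beta=\oint_{\{u=t\}}\iota_{\vb{n}_H}\omega=P_H(\{u>t\};\Omega)$. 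Hence $\int_{\{u>a\}}|\nabla_H u|\,\omega=\int_a^\infty P_H(\{u>t\};\Omega)\,dt$, and letting $a\to-\infty$ through regular values (monotone convergence on both sides) proves \eqref{eq:coarea}. The delicate point here is the set $\{\nabla_H u=0\}$, on which $\beta$ is a priori undefined: for a.e.\ $t$ the level set $\{u=t\}$ meets neither the critical set of $u$ (by Sard) nor more than a surface-null set of characteristic points (a Balogh-type fact), and $|\nabla_H u|\,\omega$ vanishes there, so this set is harmless on both sides.

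For general $u\in BV_H(\Omega)$ I would prove the two inequalities separately. For ``$\le$'': from the layer-cake representation $u(x)=\int_0^\infty\chi_{\{u>t\}}(x)\,dt-\int_{-\infty}^0\big(1-\chi_{\{u>t\}}(x)\big)\,dt$, Fubini, and the identity $\int_\Omega\sum_j X_j^*\varphi_j\,\omega=-\int_\Omega\di_\omega\big(\sum_j\varphi_jX_j\big)\,\omega=0$ for $\varphi\in\mathscr{F}(\Omega)$ (divergence theorem, compact support), one gets $\int_\Omega u\sum_j X_j^*\varphi_j\,\omega=\int_{\mathbb{R}}\big(\int_\Omega\chi_{\{u>t\}}\sum_j X_j^*\varphi_j\,\omega\big)\,dt\le\int_{\mathbb{R}}P_H(\{u>t\};\Omega)\,dt$, and taking the supremum over $\varphi$ gives $\Var_H(u;\Omega)\le\int_{\mathbb{R}}P_H(\{u>t\};\Omega)\,dt$. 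For ``$\ge$'' I would invoke a strict smooth-approximation theorem for $BV_H$: there are $u_k\in C^\infty(\Omega)$ with $u_k\to u$ in $L^1(\Omega)$ and $\int_\Omega|\nabla_H u_k|\,\omega\to\Var_H(u;\Omega)$. Applying the smooth case to each $u_k$, using $\int_{\mathbb{R}}\|\chi_{\{u_k>t\}}-\chi_{\{u>t\}}\|_{L^1(\Omega)}\,dt=\|u_k-u\|_{L^1(\Omega)}\to 0$ to extract a subsequence with $\chi_{\{u_k>t\}}\to\chi_{\{u>t\}}$ in $L^1(\Omega)$ for a.e.\ $t$, the lower semicontinuity of $E\mapsto P_H(E;\Omega)=\Var_H(\chi_E;\Omega)$ under $L^1$-convergence (a supremum of $L^1$-continuous linear functionals), and Fatou's lemma, one obtains $\int_{\mathbb{R}}P_H(\{u>t\};\Omega)\,dt\le\liminf_k\int_{\mathbb{R}}P_H(\{u_k>t\};\Omega)\,dt=\liminf_k\int_\Omega|\nabla_H u_k|\,\omega=\Var_H(u;\Omega)$. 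Together with ``$\le$'' this finishes the proof.

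The main obstacle is the strict smooth-approximation theorem for $BV_H(\Omega)$ used in the reverse inequality. Since the $X_j$ have variable coefficients, ordinary mollification does not commute with them, so a Meyers--Serrin-type construction must be localized via a partition of unity and the commutator terms $[X_j,\,\cdot\,*\rho_\varepsilon]$ controlled; this is the technical core and is exactly what is carried out for CC-spaces in \cite{garofalo1996isoperimetric}, so in practice I would cite it rather than reproduce it. The only other point that needs care is the critical/characteristic-set issue in the smooth case noted above.
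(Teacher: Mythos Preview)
Your argument is correct, but its logical direction is the reverse of the paper's. The paper proves nothing here: it simply cites \cite[equation (1.21)]{garofalo1996isoperimetric} for the full $BV_H$ statement \eqref{eq:co-area-var} and then reads off \eqref{eq:coarea} from Proposition~\ref{propn:h-var-h-grad}. You instead establish the smooth identity \eqref{eq:coarea} from scratch via the differential-forms computation $du\wedge\iota_{\nabla_H u}\omega=|\nabla_H u|^2\,\omega$ and a Fubini/Sard argument on level sets, and then build \eqref{eq:co-area-var} on top of it by the standard two-inequality scheme (layer cake plus Fubini for ``$\le$'', strict $BV_H$-approximation plus lower semicontinuity and Fatou for ``$\ge$''). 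What your route buys is a self-contained, geometrically transparent proof of the smooth case that makes explicit why the horizontal perimeter appears---you actually see $\nabla_H u/|\nabla_H u|$ become $\vb{n}_H$ on level sets---at the cost of having to handle the characteristic/critical set carefully, which you correctly flag as the delicate step. It does not, however, free you from \cite{garofalo1996isoperimetric}: you still need their strict approximation theorem for the reverse inequality in the $BV_H$ case, so in the end both proofs depend on that reference, just for different statements. In effect you have reconstructed the proof that lives inside \cite{garofalo1996isoperimetric} rather than citing its output; this is more informative but considerably longer than the paper's two-line deduction.
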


\begin{proof}
    Equation (\ref{eq:co-area-var}) is a result of \cite[equation (1.21)]{garofalo1996isoperimetric}, and (\ref{eq:coarea}) follows from (\ref{eq:co-area-var}) and Proposition \ref{propn:h-var-h-grad}.
\end{proof}

\section{Courant's nodal domain theorem}\label{sec:courant}

In order to prove generalizations of the Cheeger inequality, we need to make use of Courant's nodal domain theorem. Typically, the proofs are only written down in the case of Dirichlet boundary conditions. Since Courant's theorem is interesting in its own right, we decided to give full proofs here. The structure of the proof draws from \cite{ES}, but it has been extended to the case of Neumann or mixed boundary conditions. \\

Let $M$ be a CC-space and let $\Omega \subseteq M$ be a bounded connected domain with a piecewise smooth boundary. Let $u$ be an eigenfunction of the spectral problem $- \Delta_\bullet u = \lambda u$ on $\Omega$ corresponding to $\lambda = \lambda_k$. It does not matter if we consider Dirichlet, Neumann or mixed boundary conditions. Denote by $Z_u$ the nodal set of $u$, i.e. 
\[
Z_u := \{ p \in \Omega : u(p) = 0 \}.
\]
The connected components of $\Omega \setminus Z_u$ are referred to as \emph{nodal domains} of $u$. Courant's theorem states that $u$ has at most $k$ nodal domains. Because of the analytical assumptions \textbf{(S)} and \textbf{(C)}, we have 
\begin{theorem}[cf. \cite{ES} Section 1.3]
	If \textbf{(S)} is satisfied, then eigenfunctions of $- \Delta_\bullet$ are smooth up to the boundary $\partial \Omega$. If \textbf{(C)} is satisfied, then the unique continuation property holds, i.e. any solution to $(\Delta_\bullet - \lambda) u = 0$ that vanishes on a nonempty open subset of $\Omega$ vanishes identically in $\Omega$.
\end{theorem}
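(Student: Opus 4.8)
The plan is to treat the two assertions with different tools — H\"ormander hypoellipticity for the smoothness claim, and a Carleman/analyticity argument for the unique continuation claim — and to observe that everything except the Neumann and mixed boundary regularity is already contained in \cite{ES}. Interior smoothness is the same for all three boundary conditions: in a generating frame $\Delta = \sum_{i=1}^m X_i^2 + X_0$ with $X_0 = \sum_i \di_\omega(X_i)\,X_i$, and since the $X_i$ are bracket-generating, $\Delta-\lambda$ is hypoelliptic by H\"ormander's theorem \cite{hormander1967hypoelliptic}; hence every $L^2$-eigenfunction is $C^\infty$ on the open set $\Omega$. So \textbf{(S)} is used only near $\partial\Omega$: if $\Omega = M$ there is nothing to prove, and otherwise \textbf{(S)} guarantees that $\partial\Omega$ is smooth and free of characteristic points.

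For the Dirichlet problem, $C^\infty(\overline\Omega)$-regularity at a non-characteristic boundary point follows from the subelliptic estimates up to the boundary for the associated bilinear form (Derridj, Jerison), exactly as carried out in \cite{ES}, and I would simply quote that argument. The Neumann and mixed cases are the part that has to be added. For Neumann data one works with the weak formulation: an eigenfunction $u \in D(-\Delta_N)$ satisfies $q^N_\Omega(v,u) = \lambda\langle v,u\rangle$ for all $v \in S^1(\Omega)$, which is precisely the variational form of the second boundary value problem for $\Delta$; the subelliptic up-to-the-boundary regularity theory for this problem — again using absence of characteristic points, so that the conormal direction stays transverse to $\partial\Omega$ — yields $u \in C^\infty(\overline\Omega)$, after which (\ref{eq:GG}) recovers the pointwise identity $g(\nabla_H u,\vb{n}_H)=0$ of Proposition~\ref{prop:boundary}. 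For mixed conditions one localizes: near an interior point of the Dirichlet part $\overline\Gamma$ the problem is locally Dirichlet, and near a point of $\partial\Omega\setminus\overline\Gamma$ it is locally Neumann, so smoothness holds up to $\partial\Omega\setminus\partial\Gamma$; since $\Gamma$ has finitely many components with smooth relative boundary, the residual set $\partial\Gamma$ is lower-dimensional and does not interfere with the nodal-domain counting of Section~\ref{sec:courant}.

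Unique continuation is a purely interior statement, hence insensitive to the boundary condition, and under \textbf{(C)} there are two cases. If $\dim M = 2$: at a point where $\dim\mathcal{D}_p = 2$ the operator $\Delta - \lambda$ is uniformly elliptic with smooth coefficients, where UCP is classical in the plane (Aronszajn's estimate, or the Bers--Nirenberg theory of pseudo-analytic functions); at a point of the singular set, in suitable local coordinates $\Delta$ is a Baouendi--Grushin type operator in two variables plus lower-order terms, for which UCP follows from a Carleman estimate; since $\Omega$ is connected, patching gives the global statement. If instead $M$, $\omega$ and $X_1,\dots,X_m$ are real-analytic, I would invoke the unique continuation result for real-analytic sub-Laplacians used in \cite{ES}, whose proof is interior and carries over unchanged.

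The main obstacle is the Neumann (and a fortiori the mixed) boundary regularity: one must verify that the subelliptic up-to-the-boundary estimates known for the Dirichlet problem have a genuine analogue for the weak Neumann condition extracted from $q^N_\Omega$, and that non-characteristicity is exactly the hypothesis making this work. The mixed case then adds only the bookkeeping around the interface $\partial\Gamma$, which is handled by excision.
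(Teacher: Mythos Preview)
The paper does not supply its own proof of this theorem: it is stated with the attribution ``cf.\ \cite{ES} Section 1.3'' and no argument follows in the text. So there is no paper proof to compare your proposal against beyond the bare citation. Your outline is a reasonable reconstruction of what such a proof should contain --- interior hypoellipticity via H\"ormander, Derridj--Jerison type estimates at non-characteristic boundary points for the Dirichlet problem, and UCP via Carleman estimates in dimension two or analytic hypoellipticity in the real-analytic case --- and you correctly identify that the Neumann and mixed boundary regularity is the piece not already in \cite{ES}.

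One point worth flagging: you yourself observe that for mixed boundary conditions smoothness up to $\partial\Omega$ can genuinely fail along the interface $\partial\Gamma$, and you handle this by saying the failure set is lower-dimensional and hence harmless for the nodal-domain count. That is fine for the downstream application, but it is strictly weaker than the theorem as literally stated (``smooth up to the boundary $\partial\Omega$''). The paper is silent on this subtlety, and in fact the second proof in Section~\ref{sec:second} bypasses assumption \textbf{(S)} entirely via Theorem~\ref{thm:ImprovedRestriction}, so the mixed-boundary regularity question never becomes load-bearing. Your acknowledgement of this obstacle is appropriate; just be aware that what you have sketched establishes slightly less than the unqualified statement in the mixed case.
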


\begin{remark}
    For Carnot groups $G$, there is a diffeomorphism $F : \mathbb{R}^n \to G$ by some system of graded exponential coordinates. Thus, a bounded domain $\Omega \subseteq G$ can never be all of $M$, and assumption \textbf{(S)} reduces to the boundary of $\Omega$ being non-characteristic. Moreover, in the system of coordinates given by $F$, the vector fields $X_1, \dots , X_m$ have polynomial coefficients \cite[Proposition 2.4]{cassano2016some}. Thus, assumption \textbf{(C)} is superfluous for Carnot groups. 
\end{remark}

%The existence of certain cutoff functions is needed several times in our proofs. We state here the precise properties that we require from these functions. The proof of their existence is the same as in \cite{anne1992bornes}, but replacing the distance to $\partial \Omega$ with the distance to $\Gamma \neq \varnothing$. 

\begin{theorem}[Courant's nodal domain theorem] 
	Let $M$ be a CC-space. Let $\Omega \subseteq M$ be an open connected domain with piecewise smooth boundary, and suppose that assumptions \textbf{(S)} and \textbf{(C)} hold. 
	Let $u$ be an eigenfunction of the spectral problem $- \Delta_\bullet u = \lambda u$ with Dirichlet, Neumann, or mixed boundary conditions corresponding to $\lambda = \lambda_k^\bullet (\Omega)$. Then, $u$ has at most $k$ nodal domains. 
\end{theorem}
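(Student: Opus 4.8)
The plan is to argue by contradiction along the lines of the classical Euclidean proof, with the min-max principle (Theorem~\ref{thm:min-max}) and the unique continuation property furnished by assumption~\textbf{(C)} carrying the weight. Suppose $u$ is an eigenfunction corresponding to $\lambda = \lambda_k^\bullet(\Omega)$ that possesses at least $k+1$ nodal domains; select $k+1$ of them, say $\Omega_1,\dots,\Omega_{k+1}$, and set $v_j := u\,\chi_{\Omega_j} \in L^2(\Omega)$, i.e. $u$ on $\Omega_j$ and $0$ elsewhere. These functions are nonzero and have pairwise disjoint supports, so $\langle v_i,v_j\rangle = 0$ for $i\neq j$.

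The first step is to check that each $v_j$ lies in $S^1_\bullet(\Omega)$ and is a Dirichlet eigenfunction of $\Omega_j$ with eigenvalue $\lambda$, so that in particular $q^\bullet_\Omega(v_j) = \lambda\|v_j\|^2$ and $q^\bullet_\Omega(v_i,v_j) = 0$ for $i\neq j$. By assumption~\textbf{(S)}, $u$ is smooth up to $\partial\Omega$; using this together with the fact that $u$ vanishes on $\partial\Omega_j \cap \Omega \subseteq Z_u$ one verifies, as in~\cite{ES}, that $v_j \in S^1(\Omega)$ with distributional horizontal gradient $\nabla_H v_j = \chi_{\Omega_j}\nabla_H u$. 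For Dirichlet conditions $v_j$ vanishes on all of $\partial\Omega_j$, hence $v_j \in S^1_0(\Omega)$; for mixed conditions, since $u$ is continuous up to the boundary and vanishes on $\overline{\Gamma}$, so does $v_j$, and the Vanishing Lemma (Lemma~\ref{lem:vanishing}) yields $v_j \in S^1_{0,\Gamma}(\Omega)$; for Neumann conditions $S^1_\bullet(\Omega) = S^1(\Omega)$ and nothing further is needed. With $v_j \in S^1_\bullet(\Omega)$ in hand, testing the eigenvalue identity $q^\bullet_\Omega(u,w) = \lambda\langle u,w\rangle$ against $w = v_j$ and using $\nabla_H v_j = \chi_{\Omega_j}\nabla_H u$ gives $q^\bullet_\Omega(v_j) = q^\bullet_\Omega(u,v_j) = \lambda\langle u,v_j\rangle = \lambda\|v_j\|^2$; alternatively this follows from the Gauss--Green formula~\eqref{eq:GG} on $\Omega_j$, whose boundary term vanishes by the Dirichlet condition or the horizontal Neumann condition of Proposition~\ref{prop:boundary}.

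Now put $V := \spn\{v_1,\dots,v_{k+1}\}$, a $(k+1)$-dimensional subspace of $S^1_\bullet(\Omega)$, and let $L_{k-1} = \spn\{u_1,\dots,u_{k-1}\}$ be spanned by the first $k-1$ eigenfunctions. Writing $v = \sum_j c_j v_j$, the $k$ linear conditions $\langle v,u_i\rangle = 0$ $(1\le i\le k-1)$ and $c_{k+1} = 0$ on the $k+1$ coefficients admit a nontrivial solution; fix such a $v\neq 0$ (it is nonzero since the $v_j$ have disjoint supports). By the orthogonality relations of the previous step, $q^\bullet_\Omega(v) = \sum_j c_j^2\,\lambda\|v_j\|^2 = \lambda\|v\|^2$, hence $R^\bullet_\Omega[v] = \lambda = \lambda_k^\bullet(\Omega)$. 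Since $v \in L_{k-1}^\perp\setminus\{0\}$, the min-max principle (Theorem~\ref{thm:min-max}) forces $v$ to be an eigenfunction of $-\Delta_\bullet$ for $\lambda_k^\bullet(\Omega)$. By assumption~\textbf{(S)}, $v$ is then smooth; but $v$ vanishes identically on $\Omega_{k+1}$ (because $c_{k+1} = 0$), a nonempty open subset of $\Omega$, so by the unique continuation property from assumption~\textbf{(C)} we get $v \equiv 0$ on $\Omega$, contradicting $v\neq 0$. Therefore $u$ has at most $k$ nodal domains.

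I expect the genuine obstacle to be the first step in the Neumann and mixed cases: showing $v_j \in S^1_\bullet(\Omega)$ and $q^\bullet_\Omega(v_j) = \lambda\|v_j\|^2$. For Dirichlet this is precisely~\cite{ES}; the new ingredients are (i) the distributional-gradient identity $\nabla_H v_j = \chi_{\Omega_j}\nabla_H u$, which relies on boundary regularity from~\textbf{(S)} and on the structure of the nodal set (measure zero, a smooth hypersurface off a lower-dimensional subset), and (ii) for mixed conditions, deducing $v_j \in S^1_{0,\Gamma}(\Omega)$ from the Vanishing Lemma, which requires $u$ to be continuous up to and vanishing on $\overline{\Gamma}$. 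Once these are secured, the dimension count and the min-max/unique-continuation endgame are identical to the Euclidean situation. It is also worth noting that the route bypassing~\textbf{(S)} via the mixed-boundary extension of~\cite{frank2024courant} mentioned in the introduction runs along the same lines, with $v_j \in S^1_\bullet(\Omega)$ established directly rather than through boundary regularity.
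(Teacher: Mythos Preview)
Your contradiction scheme---build the $v_j$, find a combination in $L_{k-1}^\perp$ with Rayleigh quotient $\lambda_k$, invoke min-max and then unique continuation---is exactly the paper's argument, and your observation that $q^\bullet_\Omega(v_j)=\lambda\|v_j\|^2$ can be read off directly by testing the weak eigenvalue identity for $u$ against $w=v_j$ is in fact a cleaner route than the paper's detour through the mixed eigenvalue problem on $\Omega_j$.

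The one place where the paper invests substantially more effort is precisely the step you flag: proving $v_j\in S^1_\bullet(\Omega)$ with $\nabla_H v_j=\chi_{\Omega_j}\nabla_H u$. You propose to get this from smoothness of $u$ (via \textbf{(S)}) together with ``structure of the nodal set (\dots a smooth hypersurface off a lower-dimensional subset)'', but the paper never establishes such a structure theorem for $Z_u$, and an integration-by-parts argument on $\Omega_j$ would need $\partial\Omega_j$ to be regular enough for the divergence theorem. The paper instead gives two self-contained routes: the first proof builds explicit cutoff functions $\chi_n$ localizing near $\Gamma_i=\overline{\Omega_i}\cap(\Gamma\cup Z_u)$ (Lemma~\ref{lem:cutoff_existence}) and shows $\chi_n\cdot u|_{\Omega_i}\to u|_{\Omega_i}$ in $S^1$-norm, exploiting only that $u$ vanishes on $\Gamma_i$ and is smooth up to the boundary; the second proof (Theorem~\ref{thm:ImprovedRestriction}) avoids \textbf{(S)} altogether by an approximation $w_j=\min(\psi_i,(\varphi_j)_+)$ and a weak-compactness argument, with the Vanishing Lemma supplying the membership in $S^1_{0,\Gamma_i}$. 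Your sketch is morally closest to the second proof, but the passage from $u_+\in S^1$ to $u\chi_{\Omega_j}\in S^1$ for an \emph{individual} connected component is exactly the content of that Improved Restriction theorem and should not be treated as automatic.
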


\begin{remark}
	As we will see in the second proof, assumption \textbf{(S)} is superfluous. 
\end{remark}

\subsection{First proof}

We now give two proofs of Courant's theorem. The first proof follows closely \cite{ES}. We start by constructing cutoff functions which localize close to $\Gamma$. For the next Lemma, and hence for the rest of the first proof, we restrict to the case where $\Gamma \subseteq \partial \Omega$ is a submanifold. This avoids the technicalities of analyzing the properties function $x \mapsto d(x,\Gamma)$ is degenerate cases. Since the second proof does not make this assumption, there is no loss of generality.  

\begin{lemma}\label{lem:cutoff_existence}
	Let $M$ be either $\mathbb{R}^n$ or a smooth closed  $n$-dimensional manifold, and let $\Omega \subseteq M$ be a bounded open set with piecewise smooth boundary. Let $\Gamma \subseteq \partial \Omega$ be a submanifold. There exists a Riemannian metric $\bar{g}$ on $M$, and functions $\chi_n \in C_0^\infty (\overline{\Omega})$ such that 
	\begin{enumerate}
		\item $\chi_n(x) = 1$ for $d_{\bar{g}}(x,\Gamma) \geq \frac{1}{n}$;
		\item $\chi_n(x) = 0$ for $d_{\bar{g}}(x,\Gamma) \leq \frac{1}{2n}$; 
		\item $\| \bar{\nabla} \chi_n\|_\infty \leq C n$; 
		\item $\| \bar{\nabla}\bar{\nabla} \chi_n \|_\infty \leq C n^2$,
	\end{enumerate}
	where $\bar{\nabla}$ denotes the covariant derivative taken with respect to $\overline{g}$. 
	In particular, we have that $\| X_i \chi_n \|_\infty \leq C n$ and $\|X_i^* \chi_n \| \leq C n$ and $\| \Delta \chi_n \|_\infty \leq C n^2$ for all $i = 1,2,\dots,m$ and all $n \in \mathbb{N}.$
\end{lemma}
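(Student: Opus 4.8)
The plan is to take $\chi_n=\psi(n^2\rho^2)$, where $\rho$ is the distance to $\Gamma$ in a suitable auxiliary metric and $\psi$ is one fixed one-variable cutoff; the point of using the \emph{square} of the distance is that $d(\cdot,\Gamma)^2$ is genuinely smooth near $\Gamma$ with bounded Hessian, so the $1/\rho$ blow-up of the Hessian of $d(\cdot,\Gamma)$ never needs to be tracked. First I would fix any Riemannian metric $\bar g$ on $M$ — the Euclidean one when $M=\mathbb R^n$, and one produced from a partition of unity otherwise — and set $\rho:=d_{\bar g}(\cdot,\Gamma)$. Since $\Gamma$ is a compact submanifold, the normal exponential map of $\bar g$ gives a tubular neighbourhood $U$ of $\Gamma$ in $M$ on which the nearest-point projection onto $\Gamma$ is smooth and exhibits $\rho^2$ as the smooth function $(y,v)\mapsto|v|_{\bar g}^2$ on the normal bundle; hence $\rho^2\in C^\infty(U)$, with $\bar\nabla(\rho^2)$ and $\bar\nabla\bar\nabla(\rho^2)$ bounded on $U$ after shrinking. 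I would shrink $U$ once more so that $\bar\rho:=\sup_U\rho$ is small and $\{x\in\overline\Omega:\rho(x)\le\bar\rho/2\}\subseteq U$. (When $\Gamma$ has a boundary one must first deal with its edges; this is exactly the sort of technicality the hypothesis on $\Gamma$ is there to avoid, and it is bypassed completely by the second proof.)

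Next I would fix $\psi\in C^\infty(\mathbb R;[0,1])$ with $\psi\equiv0$ on $(-\infty,\tfrac14]$ and $\psi\equiv1$ on $[1,\infty)$, and for $n$ with $1/n\le\bar\rho/2$ set
\[
\chi_n(x):=\begin{cases}\psi\bigl(n^2\rho(x)^2\bigr),& x\in U\cap\overline\Omega,\\ 1,& x\in\overline\Omega\setminus\{\rho<1/n\}.\end{cases}
\]
On the overlap $\{\rho\ge1/n\}$ one has $n^2\rho^2\ge1$, so both lines give $1$; thus $\chi_n$ is well defined, and since $\rho^2$ is smooth on the open set $U$ while $\chi_n\equiv1$ elsewhere, $\chi_n$ is smooth up to $\partial\Omega$, i.e.\ $\chi_n\in C_0^\infty(\overline\Omega)$. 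Conditions (1) and (2) are immediate, as $\rho\ge1/n$ forces $n^2\rho^2\ge1$ and $\rho\le1/(2n)$ forces $n^2\rho^2\le\tfrac14$; the finitely many small $n$ with $1/n>\bar\rho/2$ I would handle by a plain smooth Urysohn function, equal to $1$ on $\{\rho\ge1/n\}$ and $0$ on $\{\rho\le1/(2n)\}$, absorbed into the constant $C$. For (3) and (4) I would differentiate: on $U\cap\overline\Omega$, $\bar\nabla\chi_n=n^2\psi'(n^2\rho^2)\,\bar\nabla(\rho^2)$ and
\[
\bar\nabla\bar\nabla\chi_n=n^4\psi''(n^2\rho^2)\,\bar\nabla(\rho^2)\otimes\bar\nabla(\rho^2)+n^2\psi'(n^2\rho^2)\,\bar\nabla\bar\nabla(\rho^2).
\]
Now $\psi'(n^2\rho^2)$ and $\psi''(n^2\rho^2)$ vanish unless $\tfrac14\le n^2\rho^2\le1$, i.e.\ $\tfrac1{2n}\le\rho\le\tfrac1n$, and there $|\bar\nabla(\rho^2)|=2\rho|\bar\nabla\rho|\le 2/n$ and $|\bar\nabla(\rho^2)\otimes\bar\nabla(\rho^2)|\le 4/n^2$; with the uniform bounds on $\psi',\psi''$ and on $\bar\nabla\bar\nabla(\rho^2)$ over $U$ this gives $\|\bar\nabla\chi_n\|_\infty\le Cn$ and $\|\bar\nabla\bar\nabla\chi_n\|_\infty\le Cn^2$.

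For the ``in particular'' clause I would use that $\overline\Omega$ is compact and the $X_i$ and $\omega$ smooth, so $|X_i|_{\bar g}$, $|\bar\nabla_{X_i}X_i|_{\bar g}$, $|\di_\omega(X_i)|$ are bounded on $\overline\Omega$ and $\omega(\Omega)<\infty$. Then $|X_i\chi_n|=|d\chi_n(X_i)|\le|\bar\nabla\chi_n|\,|X_i|_{\bar g}\le Cn$ by (3); writing $X_i^*\chi_n=-(X_i\chi_n+\di_\omega(X_i)\chi_n)$ and using $|\chi_n|\le 1$ gives $\|X_i^*\chi_n\|\le Cn$ in $L^2$; and $X_i^2\chi_n=\bar\nabla\bar\nabla\chi_n(X_i,X_i)+\bar\nabla\chi_n(\bar\nabla_{X_i}X_i)$ together with (3) and (4) gives $|X_i^2\chi_n|\le Cn^2$, whence $|\Delta\chi_n|\le\sum_i\bigl(|X_i^2\chi_n|+|\di_\omega(X_i)|\,|X_i\chi_n|\bigr)\le Cn^2$. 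The only step that is not routine is the one invoked in the first paragraph: the smoothness of $\rho^2=d_{\bar g}(\cdot,\Gamma)^2$ near $\Gamma$ and the boundedness of its first two covariant derivatives there. This is standard for a compact submanifold, but it is exactly where one relies on $\Gamma$ being nice — a tubular neighbourhood of $\Gamma$ (one-sided, relative to $\overline\Omega$) has to exist even though $\partial\Omega$ is only piecewise smooth — and it is the reason the paper also provides the second proof, which dispenses with the assumption on $\Gamma$. Everything after that is the chain rule together with the localization of $\psi',\psi''$ to the shell $\{\tfrac1{2n}\le\rho\le\tfrac1n\}$.
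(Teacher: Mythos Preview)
Your argument is correct, and it takes a genuinely different route from the paper. The paper follows Shi and composes a one-variable profile $\eta$ directly with the distance $d(\cdot,\Gamma)$, setting $\xi_\delta(x)=\eta\bigl((d(x,\Gamma)-\delta/2)/(\delta/4)\bigr)$ and then $\chi_n=1-\xi_{\delta_n}$; the gradient bound is immediate from $|\bar\nabla d|\le 1$, but the Hessian bound requires an external input, namely a Hessian comparison theorem for the distance to a submanifold under a lower $k$-Ricci bound (the reference is to Chahine), which is what controls $\bar\nabla\bar\nabla d$ on the shell $\{d\sim\delta\}$. Your choice $\chi_n=\psi(n^2\rho^2)$ sidesteps this entirely: because $\rho^2$ is honestly smooth on a tubular neighbourhood of $\Gamma$ with uniformly bounded first and second covariant derivatives, the chain rule together with the localisation $\tfrac{1}{2n}\le\rho\le\tfrac{1}{n}$ of $\psi',\psi''$ gives (3) and (4) without any comparison geometry. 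What your approach buys is self-containment and a cleaner mechanism; what the paper's approach buys is a direct link to the classical construction and a formulation that does not pass through the tubular neighbourhood theorem. Both arguments share the same soft spot you flagged --- the smoothness of $\rho$ (or $\rho^2$) near the boundary of $\Gamma$ when $\Gamma$ is not a closed submanifold --- and both are superseded for that reason by the second proof in Section~\ref{sec:second}.
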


\begin{proof}[Sketch of the proof]
	We follow the strategy of \cite{shi1989deforming}, taking a function $\eta \in C^\infty (\mathbb{R})$ such that $\eta(x) = 1$ when $x \leq 0$, $\eta(x) = 0$ when $x \geq 1$ and $\eta(x) \in [0,1]$. Moreover, we require that 
	\[
	\begin{cases}
		\eta'(x) \leq 0, \qquad &\forall \; x \in \mathbb{R} \\
		|\eta''(x)|\leq 8, \qquad &\forall \; x \in \mathbb{R} \\ 
		\frac{|\eta'(x)|^2}{\eta(x)} \leq 16, \qquad &\forall \; x \leq 1.
	\end{cases} 
	\]
	Let $\delta > 0$ and put 
	\[
	\xi_\delta (x) := \eta \left( \frac{d(x,\Gamma) - \delta/2}{\delta/4} \right), \qquad \forall x \in \Omega.
	\]
	With the same computations as \cite{shi1989deforming}, we have $\xi_\delta \in C^\infty (\Omega)$, $\xi_\delta(x) = 1$ when $d(x,\Gamma) \leq \delta/2$, $\xi_\delta(x) = 0$ when $d(x,\Gamma) \geq 3\delta/4$ and $\xi_\delta(x) \in [0,1]$. Moreover, 
	\[
	\bar{\nabla}_\beta \xi_\delta (x) = \frac{4}{\delta} \eta' \cdot \bar{\nabla}_\beta d(x,\Gamma)
	\]
	and 
	\[
	\bar{\nabla}_\alpha \bar{\nabla}_\beta \xi_\delta(x) = \frac{4}{\delta} \eta' \cdot \bar{\nabla}_\alpha \bar{\nabla}_\beta d(x,\Gamma) + \frac{16}{\delta^2} \eta'' \cdot \bar{\nabla}_\alpha d(x,\Gamma) \cdot \bar{\nabla}_\beta d(x,\Gamma).
	\]
	Since $d(x,\Gamma)$ is a distance function, we have $\| \bar{\nabla} d(x,\Gamma) \| \leq 1$ so the gradient of $\xi_\delta$ is bounded by $C/\delta$. The first term in the Hessian is bounded by a version of the Hessian comparison theorem for the distance function $d(x,\Gamma)$, see for example \cite[Theorem 1.6]{chahine2020volume}. The required lower bound on the $k$-Ricci curvature is obtained by letting $\bar{g}$ be the Euclidean metric if $M = \mathbb{R}^n$ and $\bar{g}$ arbitrary if $M$ is a closed manifold. In the latter case, the curvature assumption follows from compactness. The result is finally obtained by choosing $\delta_n \sim \frac{1}{n}$. 
\end{proof}

We now prove Courant's theorem for the most general case of mixed boundary conditions. Let $\Gamma \subseteq \partial \Omega$ denote the part of the boundary where we enforce Dirichlet boundary conditions. Let $u$ be an eigenfunction of $- \Delta_\bullet$ and let $\Omega_i$ be a nodal domain. We consider two cases, as in Figure \ref{fig:domain}: 
\begin{enumerate}
	\item $\overline{\Omega_i} \cap \partial \Omega = \varnothing$;
	\item $\overline{\Omega_i} \cap \partial \Omega \neq \varnothing$.
\end{enumerate}

\begin{figure}
    \centering
    \includegraphics[scale = 0.5]{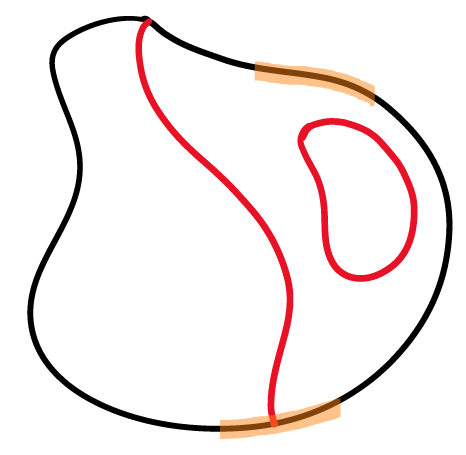}
    \caption{A domain $\Omega \subseteq M$ with nodal set $Z_u$ in red and Dirichlet boundary conditions enforced on $\Gamma \subseteq \partial \Omega$ in orange. One of the nodal domains is separated from the boundary of $\Omega$, while the other two touch it.}
    \label{fig:domain}
\end{figure}

In the first case, the boundary of $\Omega_i$ is a subset of $Z_u$, so that $u|_{\Omega_i}$ solves a spectral problem with Dirichlet boundary conditions. In this case, $u|_{\Omega_i} \in S_0^1(\Omega_i)$. This case is treaded in detail in \cite{ES}, hence we focus on the second case, where instead $u|_{\Omega_i}$ solves a mixed spectral problem on $\Omega_i$. Dirichlet boundary conditions are enforced on $\Gamma_i := \overline{\Omega_i} \cap (\Gamma \cup Z_u)$, and we will show that $u|_{\Omega_i} \in S^1_{0,\Gamma_i}(\Omega_i)$.

\begin{lemma}\label{lem:cutoff}
Suppose that \textbf{(S)} holds. Consider an eigenfunction $u$ for $- \Delta_\bullet$ corresponding to $\lambda_k$ and a nodal domain $\Omega_i$. Then, the restriction $u|_{\Omega_i}$ belongs to the space $S_{0,\Gamma_i}^1(\Omega_i)$, where $\Gamma_i$ is defined above. Moreover, $u|_{\Omega_i} \in D(- \Delta_{Z, \Gamma_i})$ and it is an eigenfunction corresponding to $\lambda_k$. Furthermore, if $\psi_i$ is defined by \begin{equation}\label{eq:psii}
	\psi_i(x) := \begin{cases}
		u(x), \qquad &\forall \; x \in \Omega_i \\
		0, \qquad   &\text{otherwise}
	\end{cases}
\end{equation}
then $\psi_i \in S^1_\bullet(\Omega).$
\end{lemma}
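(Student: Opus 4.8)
\textbf{Proof plan for Lemma \ref{lem:cutoff}.}

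The plan is to reduce everything to the Vanishing Lemma (Lemma \ref{lem:vanishing}) and the cutoff functions of Lemma \ref{lem:cutoff_existence}. First I would handle the claim that $u|_{\Omega_i} \in S^1_{0,\Gamma_i}(\Omega_i)$. Since \textbf{(S)} holds, the eigenfunction $u$ is smooth up to $\partial\Omega$, so $u|_{\Omega_i}$ and its horizontal gradient are bounded on $\Omega_i$; in particular $u|_{\Omega_i} \in S^1(\Omega_i)$. The set $\Gamma_i = \overline{\Omega_i} \cap (\Gamma \cup Z_u)$ consists of two pieces: on $Z_u$ the function $u$ vanishes by continuity, and on $\Gamma$ the function $u$ vanishes because it satisfies Dirichlet conditions there (for the Neumann part of $\partial\Omega$ there is nothing on $\Gamma_i$ to worry about). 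Hence for every $y \in \Gamma_i$ and every $\varepsilon > 0$ there is a neighborhood $U$ of $y$ in $M$ with $|u(x)| < \varepsilon$ for all $x \in U \cap \Omega_i$, by continuity of $u$ up to the boundary. Applying Lemma \ref{lem:vanishing} with $\Omega$ replaced by $\Omega_i$ and $\Gamma$ by $\Gamma_i$ gives $u|_{\Omega_i} \in S^1_{0,\Gamma_i}(\Omega_i)$.

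Next I would show $u|_{\Omega_i} \in D(-\Delta_{Z,\Gamma_i})$ with eigenvalue $\lambda_k$. By definition of the form domain it suffices to check that $v \mapsto q^{Z,\Gamma_i}_{\Omega_i}(v, u|_{\Omega_i}) = \int_{\Omega_i} g(\nabla_H v, \nabla_H u)\,\omega$ extends to a bounded functional on $L^2(\Omega_i)$, and that this functional equals $\langle v, \lambda_k u \rangle$. For $v \in C^\infty_{0,\Gamma_i}(\Omega_i)$, the support of $v$ is compactly contained in $\Omega_i \cup (\partial\Omega \setminus \overline{\Gamma_i})$; on this set $u$ is a smooth classical solution of $-\Delta u = \lambda_k u$ satisfying the horizontal Neumann condition along $\partial\Omega \setminus \overline{\Gamma_i}$ (which is where the original problem had Neumann conditions, so Proposition \ref{prop:boundary} applies). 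Thus the Gauss-Green formula \eqref{eq:GG} on $\Omega_i$ gives $\int_{\Omega_i} g(\nabla_H v, \nabla_H u)\,\omega = -\int_{\Omega_i}(\Delta u) v\,\omega + \oint_{\partial\Omega_i} v\,\iota_{\nabla_H u}\omega = \lambda_k \int_{\Omega_i} u v\,\omega$, where the boundary term vanishes because $v$ vanishes near $Z_u \cap \partial\Omega_i$ and near $\Gamma \cap \partial\Omega_i$, while $\iota_{\nabla_H u}\omega = 0$ on the remaining (Neumann) part of $\partial\Omega_i$. This identity extends from $C^\infty_{0,\Gamma_i}(\Omega_i)$ to all of $S^1_{0,\Gamma_i}(\Omega_i)$ by density, and the right-hand side is manifestly $L^2$-bounded in $v$, so $u|_{\Omega_i} \in D(-\Delta_{Z,\Gamma_i})$ and $-\Delta_{Z,\Gamma_i}(u|_{\Omega_i}) = \lambda_k\, u|_{\Omega_i}$.

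Finally, for the statement about $\psi_i$: the function $\psi_i$ agrees with $u$ on $\Omega_i$ and is zero elsewhere. It clearly lies in $L^2(\Omega)$. To see $\psi_i \in S^1_\bullet(\Omega)$ one computes $\nabla_H \psi_i$ in the distributional sense: I would argue that $\nabla_H \psi_i = \chi_{\Omega_i}\nabla_H u$ with no singular contribution along $\partial\Omega_i$, because $u$ vanishes continuously on the part of $\partial\Omega_i$ interior to $\Omega$ (namely $Z_u$) and on the part lying in $\Gamma$, so no jump occurs there; along the part of $\partial\Omega_i$ in $\partial\Omega\setminus\overline{\Gamma_i}$ there is no issue since that is the boundary of $\Omega$ itself and $S^1_\bullet$ does not see it. Concretely, testing against a vector field $\varphi \in C^\infty_0(\Omega;\mathbb R^m)$ and using the cutoffs $\chi_n$ from Lemma \ref{lem:cutoff_existence} adapted to $\Gamma_i$, one approximates $\psi_i$ by $\chi_n \psi_i \in C^\infty_0$-type functions with uniformly bounded $S^1$-norm (here one uses $\|X_i\chi_n\|_\infty \le Cn$ together with $|u|, |\nabla_H u|$ being $O(1/n)$ on the shrinking collar $\{d_{\bar g}(\cdot,\Gamma_i)\le 1/n\}$, which follows from smoothness of $u$ up to the boundary and $u|_{\Gamma_i}=0$) and deduces $\psi_i \in S^1_\bullet(\Omega)$. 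For the Neumann case $\bullet = N$ this is all that is needed since $S^1_N(\Omega) = S^1(\Omega)$; for the Dirichlet and mixed cases one checks in addition that $\psi_i$ vanishes on the appropriate part of $\partial\Omega$, which it does because $u$ did.

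\textbf{Main obstacle.} The routine parts are the two applications of the Vanishing Lemma and of Gauss-Green; the genuinely delicate point is the $\psi_i$ claim — namely justifying that extension by zero across the interface $Z_u$ does not produce a distributional singular part in $\nabla_H \psi_i$ and that the result lands in the correct space $S^1_\bullet(\Omega)$ (rather than merely $S^1_{0,\Gamma_i}(\Omega_i)$ glued trivially). This is where assumption \textbf{(S)} (smoothness up to $\partial\Omega$, hence the quantitative decay of $u$ near $\Gamma_i$) and the second- and first-derivative bounds on the cutoffs $\chi_n$ are really used; I expect most of the technical work to be a careful collar estimate showing $\int_{\{d_{\bar g}(\cdot,\Gamma_i)\le 1/n\}} |\nabla_H\chi_n|^2 |u|^2 \,\omega \to 0$.
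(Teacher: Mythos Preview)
Your argument is essentially correct, but it routes the cutoff work differently from the paper and, as a result, misidentifies where the difficulty lies.

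For the first claim, you invoke the Vanishing Lemma directly to get $u|_{\Omega_i}\in S^1_{0,\Gamma_i}(\Omega_i)$. That is fine and is in fact the idea behind the paper's \emph{second} proof (Section~\ref{sec:second}). The paper's \emph{first} proof instead constructs an explicit approximating sequence $u_n=\chi_n\cdot u|_{\Omega_i}\in C^\infty_{0,\Gamma_i}(\Omega_i)$ using the cutoffs of Lemma~\ref{lem:cutoff_existence} and shows $u_n\to u|_{\Omega_i}$ in $S^1$-norm via the mean-value estimate $|u|=O(1/n)$ on the collar. Your step~2 (domain membership and eigenvalue) matches the paper's argument.

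The key difference is your step~3. You flag $\psi_i\in S^1_\bullet(\Omega)$ as the main obstacle and propose to redo the collar estimate there. In the paper's first proof this step is essentially free: once you already have the sequence $u_n=\chi_n u|_{\Omega_i}$ from step~1, you simply extend $u_n$ by zero to all of $\Omega$ to obtain $\psi_{n,i}\in C^\infty_{0,\Gamma}(\Omega)$, and the same $S^1$-convergence gives $\psi_{n,i}\to\psi_i$. So the cutoff work is done once, in step~1, not in step~3. By using the Vanishing Lemma in step~1 you skipped that construction and then had to rebuild it later; that is not wrong, just inefficient.

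One small slip: you write that $|\nabla_H u|$ is $O(1/n)$ on the shrinking collar. That is neither true in general nor needed. What you need (and what the paper uses) is $|u|=O(1/n)$ on the collar, which comes from smoothness of $u$ up to the boundary and $u|_{\Gamma_i}=0$ via the mean value theorem; $|\nabla_H u|$ is merely bounded, and that suffices because the collar has shrinking volume.
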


\begin{proof}
Throughout this proof, we fix $i$ and consider the nodal domain $\Omega_i$. Denote by $v$ the restriction $u|_{\Omega_i}$. Note that, by \textbf{(S)}, the function $v$ is smooth up to the boundary $\partial \Omega_i$, as it is the restriction of $u$, which is smooth up to $\partial \Omega$. The first step is to prove that $v \in S^1_{0,\Gamma_i}(\Omega_i)$. \\

 Take cutoff functions $\chi_n$ as in Lemma \ref{lem:cutoff_existence} applied to $\Omega_i$, and define $u_n = \chi_n \cdot v$. It is clear that $u_n \in C^\infty_{0,\Gamma_i}(\Omega_i)$. The same estimates as in \cite[Lemma 2.0.1]{anne1992bornes} show that $u_n \xrightarrow{n \to \infty} v$ in $S^1$-norm. For example, note that
 \[
 \|X_j(u_n - v)\| = \|X_j ((1 - \chi_n) v) \| \leq \|X_j(1 - \chi_n) \cdot v \| + \|(1 - \chi_n) \cdot  X_j v \|
 \]
 for all $j$. The second term tends to zero because $X_j v \in L^2$. The function $X_j(1 - \chi_n)$ has support only close to $\Gamma_i$, where $v$ vanishes. Thus, the mean value theorem can be used to show that the first term also goes to zero as $n \to \infty.$ For the details, see \cite[page 5]{anne1992bornes}\footnote{For the convenience of the reader, the mean value theorem is called \emph{le th\'eor\`eme des accroissements finis} in French.}. Thus, $\nabla_H u_n \to \nabla_H v$ in $L^2$ and $v \in S^1_{0,\Gamma_i}(\Omega_i)$. \\

	 To prove that $v \in D(- \Delta_{Z, \Gamma})$, we have to prove that $w \mapsto q^{Z, \Gamma_i}_{\Omega_i}(v,w)$ extends to a bounded linear map on $L^2$. First, let $w \in C^\infty_{0,\Gamma_i}(\Omega_i)$. Recalling that $u_n \to v$, by definition, we have
 \begin{align*}
	 	q_{\Omega_i} (v, w) &= \lim\limits_{n \to \infty} q_{\Omega_i} (\chi_n \cdot v,w)\\
	 	 &= \lim\limits_{n \to \infty} \int_{\Omega_i} g(\nabla_H(\chi_n \cdot  v), \nabla_H w) \; \omega \\
	 	 &= \lim\limits_{n \to \infty} \int_{\Omega_i} - \Delta u_n \cdot  w \; \omega \text{ + \cancel{boundary term}} \\
	 	 &= \lim\limits_{n \to \infty} \langle \Delta u_n , w \rangle =  \langle \Delta v , w \rangle = - \lambda_k \langle v , w \rangle,
	 	\end{align*}
	where the boundary term is zero because of Proposition \ref{prop:boundary}. By density, the result extends to $S^1_{0,\Gamma_i}(\Omega_i)$. We conclude that $w \mapsto q^{Z, \Gamma_i}_{\Omega_i}(v,w)$ is bounded and that $u$ is an eigenfunction corresponding to $\lambda_k$. \\
	 
	 Finally, we show that the function $\psi_i \in S^1_{0,\Gamma}(\Omega)$ so that it is a valid test function in the min-max principle. Define the function $\psi_{n,i}$ by 
	 \[
	 \psi_{n,i}(x) := \begin{cases}
	 	u_n(x), \qquad & \forall \; x \in \Omega_i \\
	 	0, \qquad &\text{otherwise}
	 \end{cases}
	 \]
	It is clear that $\psi_{n,i} \in C^\infty_{0,\Gamma}(\Omega)$ and that $\psi_{n,i} \to \psi_i$ tends to $\psi_i$ in $S^1$-norm, so the proof is complete. 
\end{proof}

\begin{proof}[Proof of Courant's theorem]
    Consider the case of mixed boundary conditions. Let $u$ be an eigenfunction associated to $\lambda_k = \lambda_k^{Z,\Gamma}(\Omega)$, where $\Gamma \subseteq \partial \Omega$ has finitely many connected components. Assume for contradiction that $u$ has at least $k + 1$ nodal domains $\Omega_1 , \dots , \Omega_{k + 1}$.

For a fixed nodal domain $\Omega_i$, define the function $\psi_i(x)$ as in (\ref{eq:psii}). 
The restriction $\psi_i|_{\Omega_i} = u|_{\Omega_i}$ lies in $S^1_{0,\Gamma_i} (\Omega_i)$. Furthermore, it belongs to the domain $D(- \Delta_{Z,\Gamma_i})$ of the sub-Laplacian on $\Omega_i$ and it is an eigenfunction corresponding to $\lambda_k$. By the min-max principle, the Rayleigh quotient of $\psi_i$ is $\lambda_k$. Indeed: 
\begin{equation}
	R^{Z,\Gamma}_\Omega [\psi_i] = \frac{\int_{\Omega} g(\nabla_H \psi_i , \nabla_H \psi_i) \; \omega}{\int_\Omega |\psi_i|^2 \; \omega} = \frac{\int_{\Omega_i} g(\nabla_H \psi_i , \nabla_H \psi_i) \; \omega}{\int_{\Omega_i} |u|^2 \; \omega} = R^{Z,\Gamma_i}_{\Omega_i}\left[ u|_{\Omega_i} \right] = \lambda_k.
\end{equation}
Consider the $k$-dimensional subspace $L := \spn \{ \psi_1 , \dots , \psi_k \}$. Note that $L \subseteq H^1_{0,\Gamma}(\Omega)$ and that $R^{Z,\Gamma}_{\Omega}[f] = \lambda_k$ for all $f \in L$. If the orthonormal basis of eigenfunctions is denoted by $\{ u_j : j \in \mathbb{N} \}$, note that we can choose $f \in L$ such that $f$ is orthogonal to $u_1,u_2,\dots,u_{k - 1}$. Again by the min-max principle, we have that 
\begin{equation}
	\lambda_k = \min_{v \in S_0^1(\Omega) \setminus \{0\}, \; v \perp u_1 , u_2 , \dots u_{k - 1} } R_\Omega[v],
\end{equation}
and this minimum is attained if and only if $v$ is an eigenfunction corresponding to $\lambda_k$. As such, the function $f$ chosen above is an eigenfunction corresponding to $\lambda_k$ that vanishes identically on the nodal domain $\Omega_{k + 1}.$ Thus, by unique continuation, $f = 0$ identically on $\Omega$, a contradiction.

\end{proof}

\subsection{Second proof}\label{sec:second}

In this subsection, we present a second proof of Lemma \ref{lem:cutoff}, and hence a second proof of Courant's theorem. This proof is based on the recent paper \cite{frank2024courant}, and avoids assumption \textbf{(S)}. As we have seen, the main difficulty in the proof is to show that the restriction of an eigenfunction to a nodal domain lives in the correct Sobolev space. We achieve this by means of the following result, which is a slight generalization of \cite[Theorem 2.2]{frank2024courant}.

\begin{theorem}[Improved restriction]\label{thm:ImprovedRestriction}
	Let $u \in S^1_{0,\Gamma}(\Omega) \cap C(\Omega)$ and let $\Omega_i$ be a connected component of $\{ u \neq 0 \}$. Then, $u|_{\Omega_i} \in S^1_{0,\Gamma_i}(\Omega_i)$, where $\Gamma_i = \overline{\Omega_i} \cap (\Gamma \cup Z_u)$.   
\end{theorem}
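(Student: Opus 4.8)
The plan is to reduce the mixed-boundary statement to the Dirichlet statement of \cite[Theorem 2.2]{frank2024courant} combined with the Vanishing Lemma (Lemma \ref{lem:vanishing}). First I would set $v := u|_{\Omega_i}$ and observe that $v \in C(\Omega_i)$ and, since $\Omega_i$ is a connected component of $\{u \neq 0\}$, the function $v$ does not change sign on $\Omega_i$; without loss of generality assume $v \geq 0$ on $\Omega_i$. The first task is to show $v \in S^1(\Omega_i)$, i.e. that $\nabla_H v \in L^2(\Omega_i)$. This is where I would invoke the argument of \cite[Theorem 2.2]{frank2024courant}: truncations $v_\varepsilon := \min(v,\varepsilon^{-1})$ or, better, the functions $\min(0, u-\varepsilon)$ already appearing in the sketch of Lemma \ref{lem:vanishing}, extended by zero outside $\Omega_i$, give elements of $S^1_{0,\Gamma}(\Omega)$ that, restricted to $\Omega_i$, converge to $v$ in $S^1$-norm; in particular $\nabla_H v$ is the distributional horizontal gradient and lies in $L^2$. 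The key point is that extension by zero of these truncations across $\partial \Omega_i$ stays in $S^1_{0,\Gamma}(\Omega)$ precisely because $u$ vanishes on $\partial \Omega_i \cap \Omega$ and because $u$ was already in $S^1_{0,\Gamma}(\Omega)$ near $\Gamma$.

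Next I would identify the ``Dirichlet part'' of $\partial \Omega_i$. The boundary $\partial \Omega_i$ (taken inside $M$) decomposes as $(\partial \Omega_i \cap \Omega) \cup (\partial \Omega_i \cap \partial \Omega)$. On $\partial \Omega_i \cap \Omega$ the function $u$ is continuous and must vanish (a boundary point of a connected component of $\{u\neq 0\}$ that lies in the open set $\Omega$ cannot have $u \neq 0$, else it would be interior to the component), so $\partial \Omega_i \cap \Omega \subseteq Z_u$. On $\partial \Omega_i \cap \partial \Omega$, split further into the part meeting $\overline{\Gamma}$ and the rest. Thus $\Gamma_i = \overline{\Omega_i} \cap (\Gamma \cup Z_u)$ is exactly the portion of $\partial \Omega_i$ where I can expect $v$ to vanish in the appropriate sense.

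Then I would verify the hypothesis of the Vanishing Lemma for $v$ on $\Gamma_i$: for every $y \in \Gamma_i$ and every $\delta > 0$ there is a neighbourhood $U$ of $y$ with $|v(x)| < \delta$ for all $x \in U \cap \Omega_i$. For $y \in Z_u \cap \overline{\Omega_i}$ this follows from continuity of $u$ at $y$ (where $u(y)=0$) — note $u$ is continuous on all of $\Omega$, and points of $Z_u$ on $\partial\Omega_i$ are approached from within $\Omega_i$ with $u \to 0$. For $y \in \overline{\Gamma} \cap \overline{\Omega_i}$ the required smallness near $y$ is inherited from the fact that $u \in S^1_{0,\Gamma}(\Omega)$; here I would either quote directly that elements of $S^1_{0,\Gamma}(\Omega) \cap C(\Omega)$ satisfy the uniform-smallness condition near $\overline{\Gamma}$ (the converse direction implicit in the setup of mixed boundary conditions), or, to be self-contained, approximate $u$ by functions in $C^\infty_{0,\Gamma}(\Omega)$ and use continuity to transfer smallness near $\overline\Gamma$. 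Once the hypothesis is checked, Lemma \ref{lem:vanishing} applied on $\Omega_i$ with $\Gamma$ replaced by $\Gamma_i$ yields $v = u|_{\Omega_i} \in S^1_{0,\Gamma_i}(\Omega_i)$, which is the claim.

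The main obstacle I anticipate is the behaviour near $\overline{\Gamma} \cap \overline{\Omega_i}$: unlike the interior nodal set, where vanishing is automatic from $u(y)=0$ and continuity, on the $\Gamma$-part one must genuinely use that $u$ lies in $S^1_{0,\Gamma}(\Omega)$ (not merely that it is continuous), and translate membership in that Sobolev space into the pointwise uniform-smallness condition required by the Vanishing Lemma. This is essentially the content that \cite[Theorem 2.2]{frank2024courant} handles in the pure Dirichlet case; the ``slight generalization'' amounts to carrying the cutoff argument through while only cutting off near $\Gamma$ rather than all of $\partial\Omega$, which is exactly the mechanism already used in the proof of Lemma \ref{lem:vanishing}.
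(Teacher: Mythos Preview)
Your strategy diverges from the paper's at exactly the point you flag as ``the main obstacle,'' and that is where a genuine gap appears. You want to apply the Vanishing Lemma directly to $v = u|_{\Omega_i}$, which requires checking uniform smallness of $v$ near every point of $\overline{\Gamma}\cap\overline{\Omega_i}$. But $u$ is only assumed continuous on $\Omega$, not up to $\partial\Omega$, and membership in $S^1_{0,\Gamma}(\Omega)$ gives no pointwise control near $\Gamma$: convergence of $C^\infty_{0,\Gamma}$ approximants is in the $S^1$-norm, which does not imply local uniform convergence, so ``use continuity to transfer smallness near $\overline{\Gamma}$'' does not go through. The converse to the Vanishing Lemma that you would like to quote is not available in this generality.

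The paper circumvents this entirely. Rather than applying Lemma~\ref{lem:vanishing} to $v$ itself, it builds approximants $w_j = \min(\psi_i,(\varphi_j)_+)$ from the $S^1$-approximating sequence $\varphi_j\in C^\infty_{0,\Gamma}(\Omega)$ and the zero-extension $\psi_i$. For each $j$ the function $w_j$ vanishes near $\Gamma$ \emph{by construction} (since $\supp\varphi_j\cap\overline{\Gamma}=\varnothing$) and vanishes on $Z_u$ (since $\psi_i$ does), so the Vanishing Lemma applies cleanly to each $v_j = w_j|_{\Omega_i}$, giving $v_j\in S^1_{0,\Gamma_i}(\Omega_i)$. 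One then checks that $(v_j)$ is bounded in $S^1(\Omega_i)$ and converges to $u|_{\Omega_i}$ in $L^2$, and a weak-compactness argument in the closed subspace $S^1_{0,\Gamma_i}(\Omega_i)$ finishes. The key idea you are missing is to apply the Vanishing Lemma to carefully constructed approximants rather than to $u$ directly, and to close with weak limits rather than norm convergence.
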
 

\begin{proof}
    The first step is to prove that $\psi_i$ defined by equation (\ref{eq:psii}) lies in $S^1_{\mathrm{loc}}(\Omega)$. Let $\varphi \in C^\infty_0(\Omega)$ be a test function and note that $\psi_i \in C(\Omega)$. Then, $\varphi \cdot \psi_i \in C(\Omega)$ and $\varphi \cdot \psi_i$ agrees with $\varphi \cdot u$ on $\Omega_i$. The latter function lies in $S^1(\Omega_i) \cap C(\Omega_i)$, because $\varphi \in C^1(\Omega)$ and $u \in S^1(\Omega)$. Moreover, it vanishes on $\partial \Omega$, since $\varphi$ has compact support in $\Omega$. Hence, by the Vanishing Lemma, $(\varphi \cdot \psi_i) |_{\Omega_i} \in S^1_0(\Omega_i)$ (and hence also in $S^1_{0,\Gamma_i}(\Omega_i)$). As the test function $\varphi$ was arbitrary, we conclude that $\psi_i \in S^1_{\mathrm{loc}}(\Omega)$. \\

    The next step is to construct a bounded sequence $(v_j) \subseteq S^1_{0,\Gamma_i}(\Omega_i)$ which converges to $u|_{\Omega_i}$ in $L^2(\Omega_i)$. Assuming that this has been done, we conclude as follows: Extract a weakly convergent subsequence $v_j \xrightarrow{w} v$ in $S^1_{0,\Gamma_i}(\Omega_i)$. This subsequence converges weakly to $v$ in $L^2$ and strongly to $u|_{\Omega_i}$ in $L^2$. We conclude that $v = u|_{\Omega_i}$ almost everywhere, and thus $u|_{\Omega_i} \in S^1_{0,\Gamma_i}(\Omega_i)$. \\

    It remains to construct $v_j$. By definition, for any $u \in S^1_{0,\Gamma}$, there are functions $\varphi_j \in C^\infty_{0,\Gamma}(\Omega)$ such that $\varphi_j \to u$ in $S^1$-norm, where we recall that
	\[
	C^\infty_{0,\Gamma}(\Omega) = \{ \varphi \in C^\infty(\Omega) : \supp(\varphi) \cap \overline{\Gamma} = \varnothing \}.
	\] 
	We then define 
	\begin{equation}
		w_j = \min (\psi_i, (\varphi_j)_+)
	\end{equation}
	and $v_j = w_j|_{\Omega_i}$. By the first part of the proof, $w_j \in S^1_{\mathrm{loc}}(\Omega)$.The same estimates as \cite[p. 12]{frank2024courant} show that
	\[
	\sum_{k = 1}^m \int_{\Omega_i} |X_k v_j|^2 \; \omega \leq \sum_{k = 1}^m \int_\Omega \left( |X_k u|^2 + |X_i \varphi_j|^2 \right) \; \omega 
	\]
	and 
	\[
	\int_{\Omega_i} (v_j - u)^2 \; \omega \leq \int_\Omega (u - \varphi_j)^2 \; \omega \to 0, \qquad j \to \infty 
	\]
	These bounds show - since $u \in S^1(\Omega)$ and $(\varphi_j)$ converges in $S^1(\Omega)$ - that $v_j \in S^1(\Omega)$ and $(v_j)$ is a bounded sequence in $S^1$ and it converges to $u|_{\Omega_i}$ in $L^2$. It remains to prove that $v_j \in S^1_{0,\Gamma_i}(\Omega_i)$. \\
	
	This follows from an application of the vanishing Lemma. Note that $w_j \in C(\Omega)$ and that its restriction $v_j$ to $\Omega_i$ vanishes on $\Gamma_i$. Indeed, $\Gamma_i$ consists of parts coming from the nodal line $Z_u$ and parts coming from $\Gamma$. The function $w_j$ vanishes on both parts: On $Z_u$ because $\psi_i$ vanishes there and on $\Gamma$ because the support of $\varphi_j$ is disjoint from $\overline{\Gamma}$. This completes the proof. 
\end{proof}

Note that Theorem \ref{thm:ImprovedRestriction} easily implies Lemma \ref{lem:cutoff}, so that we have proven Courant's theorem independently of assumption \textbf{(S)}.

\section{Dirichlet-Cheeger inequality}\label{sec:DC}

Let $M$ be a CC-space and let $\Omega \subseteq M$ be a bounded open connected subset with piecewise smooth boundary. To prove a Cheeger inequality, we follow the steps outlined in \cite{TSG}. 

\begin{lemma}[Layer cake]
	Let $M$ be a smooth manifold with smooth volume $\omega$. If $f \geq 0$ is a smooth function, then
	\begin{equation}
		\int_0^\infty \omega( \{f > t \} ) \dd{t} = \int_M f \; \omega,
	\end{equation}
    where $\omega( \{f > t \} ) := \omega ( \{ p \in M : f(p) > t \} )$ is the $\omega$-volume of the super-level set. 
\end{lemma}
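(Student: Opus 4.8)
The plan is to prove the layer cake formula by a direct application of Tonelli's theorem on the product space $M \times (0,\infty)$, exploiting that $f \geq 0$ and that everything in sight is nonnegative, so no integrability hypotheses are needed beyond measurability (which is automatic since $f$ is smooth).

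\textbf{Step 1: Set up the double integral.} First I would observe that for every $p \in M$ we have the elementary identity
\[
f(p) = \int_0^\infty \chi_{\{t < f(p)\}} \dd{t} = \int_0^\infty \chi_{\{(p,t) \,:\, f(p) > t\}}(p,t) \dd{t},
\]
simply because the integrand is $1$ on the interval $(0, f(p))$ and $0$ elsewhere. Integrating this over $M$ against $\omega$ gives
\[
\int_M f \; \omega = \int_M \left( \int_0^\infty \chi_{\{f > t\}}(p,t) \dd{t} \right) \omega(p).
\]

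\textbf{Step 2: Swap the order of integration.} The set $E := \{ (p,t) \in M \times (0,\infty) : f(p) > t \}$ is measurable in the product $\sigma$-algebra: it is the preimage of the open set $\{(x,t) : x > t\} \subseteq \mathbb{R}^2$ under the continuous map $(p,t) \mapsto (f(p), t)$, hence open, hence measurable. Since $\chi_E \geq 0$ and both $\omega$ and Lebesgue measure on $(0,\infty)$ are $\sigma$-finite, Tonelli's theorem applies and lets me exchange the integrals:
\[
\int_M \left( \int_0^\infty \chi_E(p,t) \dd{t} \right) \omega(p) = \int_0^\infty \left( \int_M \chi_E(p,t) \, \omega(p) \right) \dd{t}.
\]
For fixed $t$, the inner integral is $\int_M \chi_{\{f > t\}} \; \omega = \omega(\{f > t\})$, which is exactly the super-level set volume appearing in the statement. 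Combining the displays yields $\int_M f \; \omega = \int_0^\infty \omega(\{f>t\}) \dd{t}$, as claimed.

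\textbf{Main obstacle.} There is essentially no hard part here; the only point requiring a moment's care is justifying the use of Tonelli, i.e. checking measurability of $E$ and $\sigma$-finiteness of $\omega$. The latter holds because $M$ is either $\mathbb{R}^n$ or a compact manifold, so $\omega$ is $\sigma$-finite (indeed finite in the compact case). One should also note that $\omega(\{f > t\})$ may be infinite for small $t$ when $M = \mathbb{R}^n$, but this causes no problem: Tonelli's theorem allows the value $+\infty$ on both sides, so the identity holds as an equality in $[0,\infty]$, and in particular the left side is finite precisely when the right side is.
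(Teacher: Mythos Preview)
Your proof is correct; the paper itself gives no argument and simply cites \cite[Theorem 1.13]{LiebLoss}, whose proof is precisely the Tonelli computation you wrote out. So you are in complete agreement with the paper's approach, only more explicit.
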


\begin{proof}
	This follows directly from \cite[Theorem 1.13]{LiebLoss}. 
\end{proof}

\begin{definition}[Dirichlet-Cheeger constant]
	Let $M$ be a CC-space and $\Omega \subseteq M$ a bounded domain with piecewise smooth boundary. Define the Dirichlet-Cheeger constant by 

    \begin{equation}\label{eq:DC-constant}
		h_D (\Omega) := \inf_A \frac{P_H(A;\Omega)}{\omega(A)},
	\end{equation}
	where the infimum is taken over all bounded subsets $A \subseteq \Omega$ with piecewise smooth boundary $\partial A$, which are compactly contained in $\Omega$.

\end{definition}

\begin{remark}
    In view of (\ref{eq:surfacemeasure-perimeter}), we could replace the numerator by $\sigma(\partial A)$, where $\sigma (\partial A) = \left| \oint_{\partial A} \iota_{\vb{n}_H} \omega \right|$ and $\vb{n}_H$ is a horizontal normal to the boundary. We take this point of view in the next section.
\end{remark}

\begin{lemma}\label{lem:CheegerLemma}
	If $f \geq 0$ is a smooth function which vanishes on $\partial \Omega$, then
	\begin{equation}
		\int_\Omega |\nabla_H f| \; \omega \geq h_D(\Omega) \int_\Omega f \; \omega.
	\end{equation}
\end{lemma}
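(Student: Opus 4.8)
The plan is to combine the coarea formula (equation~(\ref{eq:coarea})), the definition of the Dirichlet-Cheeger constant (equation~(\ref{eq:DC-constant})), and the Layer cake formula, applied to the super-level sets of $f$. This is the standard Federer--Fleming argument adapted to the sub-Riemannian setting, and all the required ingredients are already in place.

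First I would apply the coarea formula to $f$, writing
\begin{equation}
    \int_\Omega |\nabla_H f| \; \omega = \int_0^\infty P_H(\{ f > t \} ; \Omega) \dd{t},
\end{equation}
where the integral over $\mathbb{R}$ reduces to an integral over $(0,\infty)$ since $f \geq 0$. The key point is that for (almost) every $t > 0$, the super-level set $A_t := \{ x \in \Omega : f(x) > t \}$ is an admissible competitor in the infimum defining $h_D(\Omega)$: because $f$ vanishes on $\partial \Omega$ and is smooth, $A_t$ is compactly contained in $\Omega$, and by Sard's theorem its boundary $\partial A_t = \{ f = t \}$ is a smooth hypersurface for almost every $t$. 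Hence $P_H(A_t;\Omega) \geq h_D(\Omega) \, \omega(A_t)$ for a.e. $t > 0$. Integrating this inequality in $t$ and applying the Layer cake formula to the right-hand side gives
\begin{equation}
    \int_\Omega |\nabla_H f| \; \omega \geq h_D(\Omega) \int_0^\infty \omega(A_t) \dd{t} = h_D(\Omega) \int_\Omega f \; \omega,
\end{equation}
which is the claim.

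The main obstacle is the regularity/admissibility of the super-level sets: one must ensure that $A_t$ has piecewise smooth boundary and is compactly contained in $\Omega$ for enough values of $t$ to make the integration valid. This is handled by invoking Sard's theorem to exclude the measure-zero set of critical values of $f$, and by observing that $f$ vanishing continuously on $\partial \Omega$ forces $\overline{A_t} \subseteq \Omega$ for $t > 0$. A minor additional point is that the coarea formula as stated in~(\ref{eq:coarea}) requires $u \in C^\infty(\Omega)$, which $f$ satisfies; one should note that $P_H(A_t;\Omega)$ agrees with the horizontal perimeter used in the definition of $h_D$, which is exactly the content of~(\ref{eq:surfacemeasure-perimeter}) together with the preceding remark identifying $P_H(A;\Omega)$ with $\sigma(\partial A)$. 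Everything else is a routine application of Tonelli's theorem to justify the interchange of integrals.
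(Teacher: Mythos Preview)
Your proposal is correct and follows essentially the same route as the paper: apply the coarea formula to $f$, observe that the super-level sets $\{f>t\}$ are admissible competitors for $h_D(\Omega)$ because $f$ vanishes on $\partial\Omega$, and conclude via the layer cake representation. The paper's proof is slightly terser (it does not explicitly invoke Sard's theorem or Tonelli), but the argument is identical in substance.
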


\begin{proof}
	By the coarea formula (\ref{eq:coarea}), we have
	\[
	\int_\Omega |\nabla_H f| \; \omega = \int_0^\infty P_H (\{ f > t \}; \Omega) \dd{t}.
	\]
	Because $f$ vanishes on $\partial \Omega$, the set $\{ f > t \}$ is compactly embedded in $\Omega$, so it can be used as a ``test set'' for the Dirichlet-Cheeger constant.

    We conclude that 
	\[
	h_D(\Omega) \leq \frac{P_H(\{ f > t \};\Omega)}{\omega(\{ f > t \})},
	\]
	and thus 
	\[
	\int_\Omega |\nabla_H f| \; \omega \geq h_D (\Omega )  \int_0^\infty \omega(\{f > t\}) \dd{t} =  h_D(\Omega) \int_\Omega f \; \omega,
	\]
	by the layer cake representation. 
\end{proof}

\begin{theorem}[Dirichlet-Cheeger inequality]
	Let $M$ be a CC-space, and let $\Omega \subseteq M$ a bounded domain with piecewise smooth boundary. Then,
	\begin{equation}\label{eq:DC}
	\lambda_1^D(\Omega) \geq \frac{1}{4} h_D(\Omega)^2.
	\end{equation}
\end{theorem}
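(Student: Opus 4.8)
The plan is to follow the classical Cheeger argument, now that all the necessary sub-Riemannian tools are in place. The eigenvalue $\lambda_1^D(\Omega)$ is characterized by the min-max principle (Theorem \ref{thm:min-max}) as the infimum of the Rayleigh quotient $\|\nabla_H u\|^2/\|u\|^2$ over $u \in S_0^1(\Omega) \setminus \{0\}$. By density and a standard approximation argument it suffices to test the inequality on smooth functions $u \in C_0^\infty(\Omega)$, which in particular vanish on $\partial\Omega$.

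The key idea is to apply Lemma \ref{lem:CheegerLemma} not to $u$ directly but to $f = u^2$. First I would note that $f = u^2 \geq 0$ is smooth and vanishes on $\partial\Omega$ whenever $u$ does, so Lemma \ref{lem:CheegerLemma} gives
\begin{equation}
\int_\Omega |\nabla_H (u^2)| \; \omega \geq h_D(\Omega) \int_\Omega u^2 \; \omega = h_D(\Omega) \|u\|^2.
\end{equation}
Next, using $\nabla_H(u^2) = 2u \nabla_H u$ and hence $|\nabla_H(u^2)| = 2|u|\,|\nabla_H u|$, I would bound the left-hand side by the Cauchy--Schwarz inequality in $L^2(\Omega,\omega)$:
\begin{equation}
\int_\Omega 2|u|\,|\nabla_H u| \; \omega \leq 2 \left( \int_\Omega u^2 \; \omega \right)^{1/2} \left( \int_\Omega |\nabla_H u|^2 \; \omega \right)^{1/2} = 2\|u\|\,\|\nabla_H u\|.
\end{equation}
Combining the two displays yields $2\|u\|\,\|\nabla_H u\| \geq h_D(\Omega)\|u\|^2$, i.e. $\|\nabla_H u\| \geq \tfrac{1}{2} h_D(\Omega)\|u\|$, and squaring and dividing by $\|u\|^2$ gives $R_\Omega^D[u] \geq \tfrac14 h_D(\Omega)^2$. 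Taking the infimum over $u$ and invoking the min-max principle gives $\lambda_1^D(\Omega) \geq \tfrac14 h_D(\Omega)^2$.

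There is no serious obstacle here; the argument is the direct transcription of the Euclidean proof, and every analytic ingredient — the coarea formula, the layer-cake formula, Lemma \ref{lem:CheegerLemma}, and the min-max principle — has already been established in the sub-Riemannian setting. The only minor points to be careful about are: (i) justifying the reduction to $u \in C_0^\infty(\Omega)$, which follows from the density of $C_0^\infty(\Omega)$ in $S_0^1(\Omega)$ together with continuity of the Rayleigh quotient (so one can realize $\lambda_1^D(\Omega)$ as an infimum over smooth compactly supported functions); (ii) the harmless case $\|\nabla_H u\| = 0$, which forces $u$ constant and hence $u \equiv 0$ since $u \in S_0^1(\Omega)$, so it does not occur for $u \neq 0$; and (iii) noting that $u^2$ is genuinely admissible in Lemma \ref{lem:CheegerLemma}, which only requires smoothness, nonnegativity, and vanishing on $\partial\Omega$. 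All of these are routine, so the proof is short.
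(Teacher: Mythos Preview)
Your proposal is correct and follows essentially the same route as the paper: apply Lemma \ref{lem:CheegerLemma} to $u^2$, use $|\nabla_H(u^2)| = 2|u|\,|\nabla_H u|$ together with Cauchy--Schwarz, and combine. The only cosmetic difference is that the paper works directly with an eigenfunction $u$ for $\lambda_1^D(\Omega)$ (so that $\|\nabla_H u\| = \sqrt{\lambda_1^D(\Omega)}\,\|u\|$ immediately), whereas you bound the Rayleigh quotient for all $u\in C_0^\infty(\Omega)$ and then take the infimum; both lead to the same conclusion in the same number of lines.
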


\begin{proof}
	Let $u$ be an eigenfunction corresponding to $\lambda_1^D(\Omega)$. Note that 
	\[
	\int_\Omega |\nabla_H(u^2)|\;\omega = 2\int_{\Omega} |u| |\nabla_H u| \; \omega \leq 2 \|u\| \|\nabla_H u\| = 2 \sqrt{\lambda_1^D(\Omega)} \|u\|^2,
	\]
	by the min-max principle. Next, applying Lemma \ref{lem:CheegerLemma} to $u^2$, we obtain
	\[
	\int_\Omega |\nabla_H (u^2)| \; \omega \geq h_D(\Omega) \int_{\Omega} u^2 \; \omega =  h_D(\Omega) \|u\|^2.
	\]
	Combining the previous two lines and rearranging gives (\ref{eq:DC}). 
\end{proof}

\begin{remark}
    To prove that the bound (\ref{eq:DC}) is non-trivial, one would need an isoperimetric inequality on CC-spaces of the form $\frac{\sigma (\partial A)}{\omega(A)} \geq C$. Such inequalities are obtained for CC-spaces where $\dim (\mathcal{D}_p) = k$ is constant in \cite{prandi2019sub}. In particular, when $M$ is a Carnot group (see Section \ref{sec:carnot}), we have \cite[Corollary 9]{prandi2019sub} that 
    \[
    \frac{\sigma(\partial A)}{\omega (A)} \geq \frac{2 \pi |\mathbb{S}^{k - 1}|}{|\mathbb{S}^k| \mathrm{diam}_H(A)},
    \]
    where $k = \dim(\mathcal{D}_p)$, which is always constant for Carnot groups, and $\mathrm{diam}_H(A)$ is the \emph{horizontal diameter} of $A$. 
\end{remark}

\section{Neumann-Cheeger inequality}\label{sec:NC}

A similar inequality holds for Neumann boundary conditions. The proof is slightly more technical than in the Dirichlet case. For instance, it relies on the nodal domain Theorem, and thus on assumption \textbf{(C)}. 
The structure of the proof is based on \cite{TSG}. We first need to define a measure $\sigma$ on the hypersurfaces of $M$ and a Cheeger constant that is adapted to the Neumann boundary conditions. 

\begin{definition}[Surface measure]
   Let $\Sigma \subseteq M$ be a smooth hypersurface. Define 
   \begin{equation}\label{eq:surfacemeasure}
       \sigma (\Sigma) :=  \int_{\Sigma} \iota_{\vb{n}_H} \omega,
   \end{equation}
   where $\vb{n}_H$ is a smooth horizontal normal to $\Sigma$ chosen such that the integral in (\ref{eq:surfacemeasure}) is positive.
\end{definition}

\begin{remark}
    For the Dirichlet-Cheeger inequality, we effectively defined a surface measure $P_H(A; \Omega)$ only for hypersurfaces which are the boundary of some open set $A \subseteq \Omega$. In the Neumann case, we need a notion of area for more general hypersurfaces $\Sigma \subseteq \Omega$. Luckily, the definition (\ref{eq:surfacemeasure}) agrees with the horizontal perimeter for boundaries $\partial A$ of open subsets $A \subseteq \Omega$ because of (\ref{eq:surfacemeasure-perimeter}). 
\end{remark}

\begin{definition}[Neumann-Cheeger constant]
	Let $\Omega \subseteq M$ be a bounded connected domain with piecewise smooth boundary. We define the Neumann-Cheeger constant by
	\begin{equation}
	h_N(\Omega) := \inf_\Sigma \frac{\sigma(\Sigma)}{\min \{ \omega(\Omega_1) , \omega(\Omega_2) \}},
	\end{equation} 
	where the infimum is taken over all piecewise smooth (not necessarily connected) hypersurfaces $\Sigma \subseteq \Omega$ that separate $\Omega$ into two disjoint open sets $\Omega_1$ and $\Omega_2$. 
\end{definition}

The idea for proving Cheeger's inequality in the Neumann case is to use the nodal set $\{ u = 0 \}$ as a separating hypersurface. To make this rigorous, we first require a lemma.

\begin{lemma}\label{lem:nodalneumann}
	Let $M$ be a CC-space, and let $\Omega \subseteq M$ be a connected bounded domain with piecewise smooth boundary. Assume that \textbf{(C)} holds. Then, the first eigenvalue $\lambda^N_1(\Omega) = 0$, and its corresponding eigenfunctions are constant. Moreover, $\lambda_2^N(\Omega) > 0$ and its corresponding eigenfunctions have precisely two nodal domains. 
\end{lemma}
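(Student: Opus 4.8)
The plan is to treat the three assertions in order, using the min-max principle (Theorem \ref{thm:min-max}), the weak spectral theorem (Theorem \ref{thm:weak-spectral}), and Courant's nodal domain theorem together with the unique continuation property granted by \textbf{(C)}.

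First I would show $\lambda_1^N(\Omega) = 0$ with constant eigenfunctions. The constant function $u \equiv 1$ lies in $S^1(\Omega)$ since $\Omega$ is bounded, and $\nabla_H 1 = 0$, so $R_\Omega^N[1] = 0$; since the Rayleigh quotient is nonnegative, this forces $\lambda_1^N(\Omega) = 0$ and $1$ is an eigenfunction. Conversely, if $u \in S^1(\Omega)$ satisfies $R_\Omega^N[u] = 0$, then $\|\nabla_H u\|^2 = 0$, so $X_i u = 0$ in the distributional sense for every generating vector field $X_i$. Because the $X_i$ are bracket-generating, iterated brackets annihilate $u$ as well, and by a standard argument (the orbit of any point under the horizontal flows is all of the connected set $\Omega$, by the Chow-Rashevskii reachability used in Section \ref{sec:CC}) $u$ is locally constant, hence constant on the connected set $\Omega$. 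One should note $u$ is smooth by hypoellipticity of the sub-Laplacian, so ``$X_i u = 0$'' holds classically. This shows $\lambda_1^N(\Omega)$ is simple with eigenspace the constants.

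Next, $\lambda_2^N(\Omega) > 0$: by the weak spectral theorem the spectrum is discrete and $\lambda_2^N(\Omega) \geq \lambda_1^N(\Omega) = 0$; if $\lambda_2^N(\Omega) = 0$ then $0$ would have multiplicity at least two, contradicting the simplicity just established. Finally, for the nodal count of a $\lambda_2^N$-eigenfunction $u$: by Courant's theorem (applicable since \textbf{(C)} holds, and \textbf{(S)} is superfluous by the second proof), $u$ has at most two nodal domains. It has at least one. Suppose it had exactly one, i.e. $u$ has constant sign on $\Omega \setminus Z_u$; then $\int_\Omega u\,\omega \neq 0$ unless $u \equiv 0$ on a set of positive measure — but $u \perp 1$ in $L^2$ (being a $\lambda_2$-eigenfunction orthogonal to the $\lambda_1$-eigenspace), so $\int_\Omega u\,\omega = 0$, forcing $u$ to change sign, a contradiction. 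Hence $u$ has exactly two nodal domains.

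The main obstacle I expect is the rigorous justification that $\nabla_H u = 0$ (distributionally) on a connected CC-space forces $u$ constant — this is the sub-Riemannian analogue of ``connected $+$ gradient zero $\Rightarrow$ constant'' and requires either invoking hypoellipticity to pass to a smooth representative and then the Chow-Rashevskii theorem along horizontal curves, or a direct Poincaré-type argument. Everything else is a routine application of the min-max principle and the already-proven Courant theorem.
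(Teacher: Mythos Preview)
Your proposal is correct and follows essentially the same approach as the paper: constants give $\lambda_1^N=0$; conversely $R_\Omega^N[u]=0$ forces $X_i u=0$ and hence $u$ constant by horizontal connectivity (the paper phrases this simply as ``any two points in $\Omega$ can be connected by a horizontal curve''); then Courant bounds the nodal count above by two, and orthogonality to constants forces a sign change, giving exactly two. Your additional care about hypoellipticity and the distributional-to-classical passage is a reasonable elaboration of what the paper leaves implicit, but it is not a different argument.
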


\begin{proof}
	It is clear that any constant function is an eigenfunction corresponding to $\lambda = 0$. Conversely, assume that $u$ is an eigenfunction corresponding to $\lambda = 0$. Then, by the min-max principle, $R^N_\Omega[u] = 0$, and hence $X_i u = 0$ for all $i = 1,2,\dots,m$. Since any two points in $\Omega$ can be connected by a horizontal curve, it follows that $u$ is constant. Thus, $\lambda^N_2(\Omega) > 0$. By Courant's nodal domain theorem, any corresponding eigenfunction has at most 2 nodal domains. To see that it has \textit{at least} 2 nodal domains, we argue by contradiction. If $u$ would have only one nodal domain, it would not be allowed to change sign in $\Omega$. However, by the weak spectral theorem, $u$ is orthogonal to constants, which is a contradiction. 	
\end{proof}

The Neumann-Cheeger inequality is a lower bound on $\lambda_2^N(\Omega)$. Let $u$ be a corresponding eigenfunction with nodal set $Z_u$ and nodal domains $\Omega_\pm$. Without loss of generality, assume that $\omega(\Omega_+) \leq \omega (\Omega_-)$. If not, replace $u$ by $-u$.  We consider the restriction $u|_{\Omega_+},$ where $\Omega_+ = \{ x \in \Omega : u(x) > 0 \}.$ The boundary of $\Omega_+$ consists of $\partial \Omega$ and $Z_u$, and $u|_{\Omega_+}$ solves a mixed spectral problem. Indeed: 

\begin{lemma}\label{lem:mixed}
	Let $M$ be a CC-space, and let $\Omega \subseteq M$ be a connected bounded domain with piecewise smooth boundary. Then, in the setting above, $u|_{\Omega_+}$ is an eigenfunction of the mixed spectral problem on $\Omega_+$ with Dirichlet boundary conditions enforced on $Z_u$. Moreover, the corresponding eigenvalue is $\lambda_1^Z(\Omega_+,Z_u)$. 
\end{lemma}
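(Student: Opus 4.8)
The plan is to reduce this statement to the machinery already developed for Courant's theorem, specifically Theorem \ref{thm:ImprovedRestriction} (Improved restriction) and Lemma \ref{lem:cutoff}. First I would observe that $u$, being an eigenfunction of $-\Delta_N$ on $\Omega$ corresponding to $\lambda_2^N(\Omega)$, is smooth in the interior by hypoellipticity and, via the unique continuation consequence of \textbf{(C)}, has exactly two nodal domains by Lemma \ref{lem:nodalneumann}. The nodal domain $\Omega_+ = \{ u > 0 \}$ is a connected component of $\{ u \neq 0 \}$, and $u \in S^1(\Omega) = S^1_{0,\varnothing}(\Omega) \cap C(\Omega)$ (the pure Neumann case, $\Gamma = \varnothing$). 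Applying Theorem \ref{thm:ImprovedRestriction} with $\Gamma = \varnothing$ gives that $u|_{\Omega_+} \in S^1_{0,\Gamma_+}(\Omega_+)$ where $\Gamma_+ = \overline{\Omega_+} \cap Z_u$, i.e. Dirichlet conditions are enforced exactly on the nodal set $Z_u$ and Neumann conditions on the remaining boundary $\partial \Omega_+ \setminus Z_u \subseteq \partial \Omega$. This identifies the correct form domain $S^1_{0,Z_u}(\Omega_+)$.

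Next I would show $u|_{\Omega_+} \in D(-\Delta_{Z,\Gamma_+})$ and that it is an eigenfunction with eigenvalue $\lambda_2^N(\Omega)$. This is the content of the middle portion of Lemma \ref{lem:cutoff} (which, as noted in the excerpt, follows from Theorem \ref{thm:ImprovedRestriction} without assumption \textbf{(S)}): for any test function $w \in C^\infty_{0,\Gamma_+}(\Omega_+)$ one computes
\begin{equation*}
q^{Z,\Gamma_+}_{\Omega_+}(u|_{\Omega_+}, w) = \int_{\Omega_+} g(\nabla_H u, \nabla_H w)\,\omega = \int_{\Omega_+} (-\Delta u)\, w\,\omega = \lambda_2^N(\Omega) \int_{\Omega_+} u\, w\,\omega,
\end{equation*}
where the boundary term from the Gauss--Green formula (\ref{eq:GG}) vanishes: on $Z_u$ because $w$ has support disjoint from $\overline{\Gamma_+} \supseteq \overline{Z_u \cap \overline{\Omega_+}}$, and on $\partial \Omega_+ \setminus Z_u \subseteq \partial \Omega$ because $u$ satisfies horizontal Neumann conditions there by Proposition \ref{prop:boundary}. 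By density this extends to all of $S^1_{0,\Gamma_+}(\Omega_+)$, so $v \mapsto q^{Z,\Gamma_+}_{\Omega_+}(u|_{\Omega_+}, v)$ is $L^2$-bounded, placing $u|_{\Omega_+}$ in the domain and making it an eigenfunction with eigenvalue $\lambda_2^N(\Omega)$.

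Finally I would argue that this eigenvalue is in fact the \emph{first} mixed eigenvalue $\lambda_1^Z(\Omega_+, Z_u)$. By the min-max principle (Theorem \ref{thm:min-max}), $\lambda_1^Z(\Omega_+, Z_u)$ is the minimum of the Rayleigh quotient over $S^1_{0,Z_u}(\Omega_+) \setminus \{0\}$, and it is attained exactly on first eigenfunctions. Since $u|_{\Omega_+} > 0$ throughout $\Omega_+$, it does not change sign; but every eigenfunction for an eigenvalue strictly above the bottom of the spectrum must be orthogonal to the (positive) ground state and hence must change sign. Therefore $u|_{\Omega_+}$ must itself be a ground state, i.e. $\lambda_2^N(\Omega) = \lambda_1^Z(\Omega_+, Z_u)$. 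I expect the main obstacle to be bookkeeping the boundary decomposition $\partial \Omega_+ = (Z_u \cap \overline{\Omega_+}) \cup (\partial \Omega \cap \overline{\Omega_+})$ carefully enough to justify that the Gauss--Green boundary term splits into a piece killed by the Dirichlet-type support condition and a piece killed by the Neumann condition of Proposition \ref{prop:boundary}; everything else is a direct transcription of the nodal-domain arguments already in hand.
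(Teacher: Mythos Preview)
Your proposal is correct and follows essentially the same route as the paper: invoke Lemma~\ref{lem:cutoff} (via Theorem~\ref{thm:ImprovedRestriction}) to place $u|_{\Omega_+}$ in the right domain and identify it as a mixed eigenfunction with eigenvalue $\lambda_2^N(\Omega)$, then use positivity to force it to be the ground state. The only substantive difference is that you assert the ground state of the mixed problem is positive as a known fact, whereas the paper supplies the one-line justification: for any first eigenfunction $u_1$, the function $|u_1|$ has the same Rayleigh quotient, hence is also a minimizer and therefore a non-negative first eigenfunction; orthogonality of the strictly positive $u|_{\Omega_+}$ to this $|u_1|$ is then impossible.
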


\begin{proof}

It follows from Lemma \ref{lem:cutoff} that $u_+ := u|_{\Omega_+}$ is an eigenfunction of the mixed spectral problem, so it remains to show that it corresponds to the \emph{first} eigenvalue. To this end, we prove that there exists an eigenfunction $u_1$ corresponding to $\lambda_1^Z(\Omega, Z_u)$ that is strictly positive in $\Omega_+$. Assuming we have shown this, let $\lambda$ denote the eigenvalue corresponding to $u_+.$ If $\lambda > \lambda_1^Z(\Omega_+, Z_u)$, then $u_+$ would have to be orthogonal to $u_1$, which is not possible, as both functions are strictly positive in $\Omega_+$. \\

Thus, we only need to prove that there is a positive eigenfunction. Let $u_1$ be a nontrivial eigenfunction corresponding to $\lambda_1^Z(\Omega_+, Z_u)$. As in \cite{ES}, we note that $|u_1| \in D(\hat{q}^{Z,Z_u}_{\Omega_+})$ and that $R^{Z,Z_u}_{\Omega_+}[|u_1|] = R^{Z,Z_u}_{\Omega_+}[u_1]$. This follows essentially from the result of \cite[Exercise 5.18]{Evans}. Hence, $|u_1|$ is also a minimizer of the Rayleigh quotient and hence it is an eigenfunction corresponding to $\lambda_1^Z(\Omega_+ , Z_u)$. In conclusion, $|u_1|$ is the desired non-negative eigenfunction and the proof is complete.
\end{proof}

We now establish a Cheeger inequality for mixed spectral problems. 

\begin{definition}[Mixed Cheeger constant]
Let $M$ be a CC-space, and let $\Omega \subseteq M$ a bounded domain with piecewise smooth boundary. Let $\Gamma \subseteq \partial \Omega$ with finitely many connected components. We define the mixed Cheeger constant by 
\begin{equation}
h_Z(\Omega, \Gamma) := \inf_A \frac{P_H(A; \Omega)}{\omega (A)} = \inf_A \frac{\sigma (\partial A)}{\omega (A)},
\end{equation}	
where the infimum is taken over all open sets $A \subseteq \Omega$ with piecewise smooth boundary such that $\partial A \cap \Gamma = \varnothing.$ Since the proof is practically identical to that of the Dirichlet-Cheeger inequality, we omit the proof. 
\end{definition}

\begin{theorem}[Mixed Cheeger inequality]\label{thm:mixedcheeger}
Let $M$ be a CC-space, and let $\Omega \subseteq M$ a bounded domain with piecewise smooth boundary. Let $\Gamma \subseteq \partial \Omega$ with finitely many connected components. Then, we have 
\begin{equation}
\lambda_1^Z(\Omega, \Gamma) \geq \frac{1}{4} h_Z(\Omega, \Gamma)^2.
\end{equation} 
\end{theorem}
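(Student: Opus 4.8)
The plan is to mimic the proof of the Dirichlet-Cheeger inequality almost verbatim, replacing the space $S^1_0(\Omega)$ by $S^1_{0,\Gamma}(\Omega)$ throughout. The only place where Dirichlet conditions were genuinely used in the Dirichlet case was in Lemma \ref{lem:CheegerLemma}: the test function $f$ was required to vanish on $\partial\Omega$ so that every super-level set $\{f>t\}$ is compactly contained in $\Omega$ and hence admissible in the infimum defining $h_D(\Omega)$. For the mixed constant $h_Z(\Omega,\Gamma)$ the admissible competitors are open sets $A\subseteq\Omega$ with piecewise smooth boundary such that $\partial A\cap\Gamma=\varnothing$, so the analogue of Lemma \ref{lem:CheegerLemma} should read: if $f\geq 0$ is smooth on $\overline\Omega$ and vanishes on $\Gamma$, then $\int_\Omega|\nabla_H f|\,\omega\geq h_Z(\Omega,\Gamma)\int_\Omega f\,\omega$.

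First I would establish this mixed analogue of Lemma \ref{lem:CheegerLemma}. Apply the coarea formula (\ref{eq:coarea}) to write $\int_\Omega|\nabla_H f|\,\omega=\int_0^\infty P_H(\{f>t\};\Omega)\,\dd{t}$. The key point is that for a.e.\ $t>0$ the super-level set $A_t=\{f>t\}$ has $\partial A_t\cap\Gamma=\varnothing$: since $f$ vanishes on $\Gamma$ and $\Gamma$ is contained in $\partial\Omega$, a boundary point of $A_t$ lying on $\Gamma$ would have to satisfy $f=t>0$ there, a contradiction (one may need Sard's theorem to ensure $\partial A_t$ is piecewise smooth for a.e.\ $t$, exactly as in the Dirichlet case). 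Hence $A_t$ is an admissible competitor, so $P_H(A_t;\Omega)\geq h_Z(\Omega,\Gamma)\,\omega(A_t)$, and integrating in $t$ together with the layer cake representation $\int_0^\infty\omega(\{f>t\})\,\dd{t}=\int_\Omega f\,\omega$ gives the claim.

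Then I would run the variational argument. Let $u\in S^1_{0,\Gamma}(\Omega)$ be an eigenfunction corresponding to $\lambda_1^Z(\Omega,\Gamma)$, which we may take smooth enough to apply the above (approximating by $C^\infty_{0,\Gamma}$ functions if necessary, or invoking the regularity statement). As in the Dirichlet proof,
\begin{equation*}
\int_\Omega|\nabla_H(u^2)|\,\omega=2\int_\Omega|u|\,|\nabla_H u|\,\omega\leq 2\|u\|\,\|\nabla_H u\|=2\sqrt{\lambda_1^Z(\Omega,\Gamma)}\,\|u\|^2
\end{equation*}
by Cauchy--Schwarz and the min-max principle. Since $u$ vanishes on $\Gamma$, so does $u^2$, and the mixed Cheeger lemma applied to $f=u^2$ yields $\int_\Omega|\nabla_H(u^2)|\,\omega\geq h_Z(\Omega,\Gamma)\int_\Omega u^2\,\omega=h_Z(\Omega,\Gamma)\|u\|^2$. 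Combining the two displays and cancelling $\|u\|^2$ gives $2\sqrt{\lambda_1^Z(\Omega,\Gamma)}\geq h_Z(\Omega,\Gamma)$, i.e.\ the desired inequality after squaring.

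The main obstacle is really just bookkeeping around the boundary: making sure that the competitor sets $\{u^2>t\}$ genuinely satisfy $\partial A\cap\Gamma=\varnothing$ and have piecewise smooth boundary for a.e.\ $t$, and that $u^2$ is regular enough (smooth up to $\partial\Omega$, or at least approximable in the relevant norm by functions in $C^\infty_{0,\Gamma}(\overline\Omega)$) for the coarea formula and the algebraic manipulation $|\nabla_H(u^2)|=2|u|\,|\nabla_H u|$ to be valid. None of this is deep, but it is the only substantive difference from the Dirichlet argument, which is why the authors chose to omit the details.
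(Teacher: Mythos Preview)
Your proposal is correct and is exactly the approach the paper has in mind: the authors explicitly omit the proof, stating that it is ``practically identical to that of the Dirichlet-Cheeger inequality,'' and your write-up supplies precisely those identical steps with the obvious replacement of $S^1_0(\Omega)$ by $S^1_{0,\Gamma}(\Omega)$ and the observation that $\{f>t\}$ is an admissible competitor for $h_Z(\Omega,\Gamma)$ because $f$ vanishes on $\Gamma$. The bookkeeping caveats you flag (Sard, regularity of $u$) are the same ones implicitly swept under the rug in the Dirichlet argument and do not represent any new difficulty.
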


\begin{lemma}\label{lem:comparisonDN}
	Let $M$ be a CC-space, and let $\Omega \subseteq M$ a bounded domain with piecewise smooth boundary. Let $\Gamma \subseteq \partial \Omega$ with finitely many connected components. Then, we have
	\begin{equation}
		h_{Z}(\Omega_+,Z_u) \geq h_N(\Omega). 
	\end{equation}
\end{lemma}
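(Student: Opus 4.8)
The plan is to show that any set $A$ which is a valid competitor for $h_Z(\Omega_+, Z_u)$ can be turned into a separating hypersurface $\Sigma$ for $\Omega$ in the sense of the Neumann-Cheeger constant, in a way that does not increase the ratio. So let $A \subseteq \Omega_+$ be an open set with piecewise smooth boundary such that $\partial A \cap Z_u = \varnothing$. The key observation is that $\partial A$ decomposes into two pieces: the part lying in the interior of $\Omega$ (equivalently, in $\Omega_+$), call it $\Sigma := \partial A \cap \Omega$, and the part lying on $\partial \Omega$. Since Dirichlet conditions for the mixed problem on $\Omega_+$ are enforced on $Z_u$, the competitor condition $\partial A \cap Z_u = \varnothing$ means $\partial A$ only touches the ``Neumann part'' $\partial \Omega_+ \setminus Z_u \subseteq \partial \Omega$. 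Hence $\Sigma = \partial A \cap \Omega$ is a hypersurface contained in $\Omega$, and it separates $\Omega$ into $A$ and $\Omega \setminus \overline{A}$.

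First I would make precise that $\Sigma$ genuinely separates $\Omega$ into two open sets $\Omega_1 = A$ and $\Omega_2 = \Omega \setminus \overline{A}$, both nonempty (nonemptiness of $\Omega_2$ holds because $A \subseteq \Omega_+$ and $\Omega_- \neq \varnothing$ by Lemma \ref{lem:nodalneumann}, so in particular $\omega(\Omega \setminus \overline A) \geq \omega(\Omega_-) > 0$). Then, by definition of $\sigma$ and the identification (\ref{eq:surfacemeasure-perimeter}) between $\sigma(\partial A)$ and $P_H(A;\Omega_+)$, together with the fact that $\partial A$ meets $\partial \Omega$ only in the interior-to-$\Omega_+$ sense — more carefully, the portion of $\partial A$ lying on $\partial \Omega$ contributes nothing to $\sigma$ relative to $\Omega$ but may or may not contribute to $P_H(A;\Omega_+)$; since $P_H(A;\Omega_+) = \Var_H(\chi_A;\Omega_+)$ only sees the part of $\partial A$ inside $\Omega_+$, we get $\sigma(\Sigma) \leq P_H(A;\Omega_+) = \sigma(\partial A)$ where $\sigma(\partial A)$ here is really the $\Omega_+$-relative perimeter. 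Thus
\begin{equation}
\frac{\sigma(\Sigma)}{\min\{\omega(A), \omega(\Omega\setminus\overline A)\}} \leq \frac{\sigma(\Sigma)}{\omega(A)} \leq \frac{P_H(A;\Omega_+)}{\omega(A)}.
\end{equation}
Taking the infimum over all such competitors $A$ on the right gives $h_N(\Omega) \leq h_Z(\Omega_+, Z_u)$, as desired.

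The main obstacle I anticipate is bookkeeping about which pieces of $\partial A$ land where and ensuring the perimeter inequality goes in the right direction: $P_H(A;\Omega_+)$ is a \emph{relative} perimeter that ignores the part of $\partial A$ on $\partial \Omega_+$, while $\sigma(\Sigma)$ must be computed for $\Sigma$ as a hypersurface sitting inside $\Omega$ — one needs $\Sigma = \partial A \cap \Omega$ to equal (up to measure zero) $\partial A \cap \Omega_+$, which uses $A \subseteq \Omega_+$ and that $\partial A$ avoids $Z_u$, so that $\partial A$ cannot cross from $\Omega_+$ into $\Omega_-$. A secondary subtlety is the requirement that $\Sigma$ be piecewise smooth and that $\Omega_1, \Omega_2$ be honest open sets separated by it; this is inherited from the piecewise smoothness of $\partial A$ and $\partial \Omega$, though one should note $\Sigma$ may fail to be connected, which is permitted in the definition of $h_N$. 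Once these topological/measure-theoretic identifications are in place, the inequality is immediate from the definitions and (\ref{eq:surfacemeasure-perimeter}).
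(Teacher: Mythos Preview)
Your approach is essentially the paper's: take a competitor $A$ for $h_Z(\Omega_+, Z_u)$ and use $\Sigma = \partial A \cap \Omega_+$ as a separating hypersurface for $h_N(\Omega)$. The paper phrases this as a proof by contradiction; you do it directly.

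There is one slip, however. The step
\[
\frac{\sigma(\Sigma)}{\min\{\omega(A), \omega(\Omega\setminus\overline A)\}} \leq \frac{\sigma(\Sigma)}{\omega(A)}
\]
is not automatic: since $\min\{x,y\} \leq x$ always, dividing by the minimum makes the fraction \emph{larger}, not smaller. What you actually need is $\omega(A) \leq \omega(\Omega\setminus\overline A)$, so that the minimum equals $\omega(A)$ and the displayed step is an equality. You have the ingredients but did not assemble them: the standing hypothesis set up just before the lemma is $\omega(\Omega_+) \leq \omega(\Omega_-)$, and combining $A \subseteq \Omega_+$ with $\Omega_- \subseteq \Omega \setminus \overline{A}$ gives
\[
\omega(A) \leq \omega(\Omega_+) \leq \omega(\Omega_-) \leq \omega(\Omega\setminus\overline A).
\]
The paper uses exactly this chain. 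Your observation that $\omega(\Omega\setminus\overline A) \geq \omega(\Omega_-) > 0$ secures nonemptiness of $\Omega_2$ but does not, by itself, yield the volume comparison with $\omega(A)$.

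A minor remark: your worry about whether $\sigma(\Sigma) \leq P_H(A;\Omega_+)$ is overcautious. The relative perimeter $P_H(A;\Omega_+)$ by definition only sees $\partial A \cap \Omega_+$, which is precisely $\Sigma$ (since $\partial A$ avoids $Z_u$), so $\sigma(\Sigma) = P_H(A;\Omega_+)$ via (\ref{eq:surfacemeasure-perimeter}); the paper simply writes $\sigma(\partial A \cap \Omega_+)$ in the numerator from the start.
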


\begin{proof}
Assume for contradiction that $h_{Z}(\Omega_+,Z_u) < h_N(\Omega)$. By definition of infimum, there must be a specific set $A \subseteq \Omega_+$ with piecewise smooth boundary and $A \cap Z_u  = \varnothing$ such that 
\[
\frac{\sigma(\partial A \cap \Omega_+)}{\omega(A)} < h_N(\Omega).
\]
Notice that $\Gamma := \partial A \cap \Omega_+$ is a hypersurface meeting the definition of $h_N(\Omega)$, so that 
\[
h_N(\Omega) \leq \frac{\sigma(\Gamma)}{\min \{  \omega(\Omega_+), \omega(\Omega_-) \}} = \frac{\sigma(\Gamma)}{\omega(\Omega_+)}\leq \frac{\sigma(\Gamma)}{\omega(A)} < h_N(\Omega),
\]
where we used that $\omega(\Omega_-) \geq \omega(\Omega_+) \geq \omega(A)$. This is a contradiction, so the proof is complete. 
\end{proof}

\begin{theorem}[Neumann-Cheeger inequality]
	Let $M$ be a CC-space, and let $\Omega \subseteq M$ be a connected bounded domain with piecewise smooth boundary. Assume that \textbf{(C)} holds. Then, we have 
	\begin{equation}
	\lambda_2^N(\Omega) \geq \frac{1}{4}h_N(\Omega)^2
	\end{equation}
\end{theorem}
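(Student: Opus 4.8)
The plan is to chain together the results already established in this section. First I would invoke Lemma \ref{lem:nodalneumann}, which uses assumption \textbf{(C)}: it gives $\lambda_2^N(\Omega) > 0$, and any associated eigenfunction $u$ has exactly two nodal domains $\Omega_+ = \{ u > 0 \}$ and $\Omega_- = \{ u < 0 \}$, so that the nodal set $Z_u$ is a piecewise smooth hypersurface separating $\Omega$ into the two disjoint open sets $\Omega_+$ and $\Omega_-$. After possibly replacing $u$ by $-u$, I may assume $\omega(\Omega_+) \leq \omega(\Omega_-)$, so that $\min\{ \omega(\Omega_+), \omega(\Omega_-) \} = \omega(\Omega_+)$. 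In particular, $Z_u$ is admissible in the infimum defining $h_N(\Omega)$.

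Next I would identify the spectral problem solved by the restriction $u_+ := u|_{\Omega_+}$. By Lemma \ref{lem:cutoff} (equivalently Theorem \ref{thm:ImprovedRestriction}), $u_+$ lies in $S^1_{0, Z_u}(\Omega_+)$ and is an eigenfunction of the mixed sub-Laplacian $-\Delta_{Z, Z_u}$ on $\Omega_+$ with Dirichlet conditions enforced on $Z_u$, corresponding to the same eigenvalue $\lambda_2^N(\Omega)$. By Lemma \ref{lem:mixed}, this eigenvalue is in fact the \emph{first} mixed eigenvalue, i.e. $\lambda_1^Z(\Omega_+, Z_u) = \lambda_2^N(\Omega)$. (Concretely, one computes $R^{Z, Z_u}_{\Omega_+}[u_+] = \|\nabla_H u\|^2_{L^2(\Omega_+)} / \|u\|^2_{L^2(\Omega_+)}$, which equals $\lambda_2^N(\Omega)$ since $u$ is a genuine eigenfunction on $\Omega$; Lemma \ref{lem:mixed} ensures this Rayleigh quotient realizes the bottom of the mixed spectrum.)

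With this identification in hand the estimate is immediate. Applying the Mixed Cheeger inequality (Theorem \ref{thm:mixedcheeger}) to the pair $(\Omega_+, Z_u)$ gives
\[
\lambda_2^N(\Omega) = \lambda_1^Z(\Omega_+, Z_u) \geq \tfrac14 \, h_Z(\Omega_+, Z_u)^2,
\]
and Lemma \ref{lem:comparisonDN} gives $h_Z(\Omega_+, Z_u) \geq h_N(\Omega)$. Combining these two inequalities yields $\lambda_2^N(\Omega) \geq \tfrac14 h_N(\Omega)^2$, as claimed.

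The argument is short precisely because the work has been front-loaded into the preparatory lemmas. The one place that genuinely needs care is the identification $\lambda_1^Z(\Omega_+, Z_u) = \lambda_2^N(\Omega)$: it is not automatic that the restriction of $u$ realizes the \emph{bottom} of the mixed spectrum rather than some higher mixed eigenvalue, and this is exactly what Lemma \ref{lem:mixed} supplies, via the strict positivity of a first mixed eigenfunction on $\Omega_+$ together with orthogonality of eigenfunctions at distinct eigenvalues. A minor secondary point is bookkeeping on the volumes — ensuring $\omega(\Omega_-) \geq \omega(\Omega_+) \geq \omega(A)$ for competitor sets $A \subseteq \Omega_+$ — but this is already handled inside Lemma \ref{lem:comparisonDN}. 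Everything else (the nodal count, the restriction lemma, the mixed inequality) may be quoted directly.
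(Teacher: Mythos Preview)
Your proposal is correct and follows exactly the paper's own proof, which simply chains Lemma~\ref{lem:mixed}, Theorem~\ref{thm:mixedcheeger}, and Lemma~\ref{lem:comparisonDN} to obtain $\lambda_2^N(\Omega) = \lambda_1^Z(\Omega_+, Z_u) \geq \tfrac14 h_Z(\Omega_+, Z_u)^2 \geq \tfrac14 h_N(\Omega)^2$. Your additional remarks (invoking Lemma~\ref{lem:nodalneumann} explicitly and flagging the role of Lemma~\ref{lem:mixed} in identifying the bottom of the mixed spectrum) are accurate elaborations of steps the paper treats as already set up.
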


\begin{proof}
	It follows from successively applying Lemma \ref{lem:mixed}, Theorem \ref{thm:mixedcheeger} and Lemma \ref{lem:comparisonDN} that 
	\[
	\lambda_2^N(\Omega) = \lambda_1^Z(\Omega_+, Z_u) \geq \frac{1}{4} h_Z^2(\Omega_+, Z_u) \geq \frac{1}{4} h^2_N(\Omega),
	\]
	completing the proof.
\end{proof}

\section{Max flow min cut result}\label{sec:mfmc}

In this section, we present a technique to lower bound the Cheeger constants, both in the Dirichlet and Neumann case. This approach is based on \cite{grieser2006first}.

\begin{theorem}[Max flow min cut]\label{thm:MFMC}
    Let $M$ be a CC-space, and let $\Omega \subseteq M$ a bounded domain with piecewise smooth boundary. Let $V \in \mathfrak{X}_H(\Omega)$ be a horizontal vector field and let $h \in \mathbb{R}$ be such that $\|V\| \leq 1$ and $\di_\omega (V) \geq h$ pointwise in $\Omega$. Then, $h_D(\Omega) \geq h$. 
\end{theorem}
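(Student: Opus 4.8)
The plan is to prove Theorem \ref{thm:MFMC} by testing the definition \eqref{eq:DC-constant} of $h_D(\Omega)$ against an arbitrary admissible set $A$ and using the divergence theorem to relate the flux of $V$ through $\partial A$ to the perimeter $P_H(A;\Omega)$ on one side and to $\int_A \di_\omega(V)\,\omega$ on the other. Concretely, fix a bounded open $A \subseteq \Omega$ with piecewise smooth boundary, compactly contained in $\Omega$. The key chain of (in)equalities I would establish is
\begin{equation}
h\,\omega(A) \leq \int_A \di_\omega(V)\,\omega = \oint_{\partial A} \iota_V \omega \leq \oint_{\partial A} \iota_{\vb{n}_H}\omega = P_H(A;\Omega).
\end{equation}
The first inequality is just the hypothesis $\di_\omega(V) \geq h$ integrated over $A$ (using $\omega(A) \geq 0$). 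The middle equality is the divergence theorem applied on $A$, which is legitimate since $\overline{A} \subseteq \Omega$ so $V$ is smooth on a neighborhood of $\overline{A}$ and there is no contribution from $\partial\Omega$. The last step is the pointwise bound on the boundary integrand: decomposing $V(p) = a\,\vb{n}_H(p) + \vb{w}$ with $\vb{w} \in \mathcal{D}_p \cap T_p(\partial A)$ (at non-characteristic points; at characteristic points the integrand vanishes as in the Lemma following \eqref{eq:hperim}), one has $\iota_V\omega = a\,\iota_{\vb{n}_H}\omega$ on $\partial A$ because $\iota_{\vb{w}}\omega$ restricts to zero on $\partial A$, and $|a| \leq \|V\| \leq 1$. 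This gives $\oint_{\partial A}\iota_V\omega \leq \oint_{\partial A}\iota_{\vb{n}_H}\omega$, where $\vb{n}_H$ is oriented to make the right-hand side positive, i.e. equal to $P_H(A;\Omega)$ by \eqref{eq:surfacemeasure-perimeter}.

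Having established $h\,\omega(A) \leq P_H(A;\Omega)$ for every admissible $A$, I would conclude by rearranging to $\frac{P_H(A;\Omega)}{\omega(A)} \geq h$ (valid when $\omega(A) > 0$; sets of measure zero can be discarded since they do not affect the infimum, or handled trivially) and taking the infimum over all such $A$ to obtain $h_D(\Omega) = \inf_A \frac{P_H(A;\Omega)}{\omega(A)} \geq h$.

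I expect the main obstacle to be the careful justification of the divergence-theorem step and the boundary-integrand estimate in the sub-Riemannian setting: one must check that $V$ being merely a horizontal vector field (with $\|V\| \leq 1$ in the CC-metric, a pointwise constraint on $\mathcal{D}_p$) still yields $|a| = |g_p(V,\vb{n}_H)| \leq \|V\|\,\|\vb{n}_H\| \leq 1$ via Cauchy--Schwarz on the inner product $g_p$ restricted to $\mathcal{D}_p$, and that the characteristic points of $\partial A$ — where $\vb{n}_H = 0$ by definition — genuinely contribute nothing, which follows exactly as in the proof of the Lemma around \eqref{eq:hperim}. Everything else is a direct transcription of the Euclidean max-flow min-cut argument of \cite{grieser2006first}, with $\iota_{\vb{n}_H}\omega$ playing the role of the surface element and $\di_\omega$ the role of the Euclidean divergence.
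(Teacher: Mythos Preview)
Your proposal is correct and follows essentially the same approach as the paper: the same chain of (in)equalities via the divergence theorem, the same decomposition $V = a\,\vb{n}_H + \vb{w}$ at non-characteristic points with $|a| = |g(V,\vb{n}_H)| \leq 1$, and the same treatment of characteristic points where the integrand vanishes. The only cosmetic difference is the order in which you write the chain and your explicit mention of Cauchy--Schwarz rather than the equivalent $\|V\|\,\|\vb{n}_H\|\cos\vartheta$ formulation.
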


\begin{proof}
    Let $V$ be as above, and let $A\subseteq \Omega$ be a test set for the Dirichlet-Cheeger constant. The basic chain of inequalities is as follows: 
    \[
    \sigma(\partial A) \geq \oint_{\partial A} \iota_V \omega = \int_A \di_\omega (V) \; \omega \geq h \cdot  \omega (A),
    \]
    The equality in the middle is just the divergence theorem, while the last inequality is trivial, so it remains to prove the first inequality. \\

    If $p \in \partial A$ is characteristic, then $V_p \in \mathcal{D}_p \subseteq T_p(\partial A)$. Hence if $\vb{v}_1 , \dots , \vb{v}_{n-1}$ is a basis for $T_p(\partial A)$, then
    \[
    \iota_V \omega_p (\vb{v_1 , \dots , \vb{v}_{n-1}}) = \omega_p (V, \vb{v_1 , \dots , \vb{v}_{n-1}}) = 0,
    \]
    because the latter vectors are linearly dependent. It thus suffices to consider non-characteristic points. Decompose $V = a \vb{n}_H + W$ with $a \in \mathbb{R}$ and $W \in \mathcal{D}_p \cap T_p (\partial A)$ and note that $a = g(V , \vb{n}_H)$. Now, denoting the set of non-characteristic points on $\partial A$ by $\{ \textbf{NC} \}$, we have 
    \[
    \oint_{\partial A} \iota_{V} \omega = \oint_{\partial A \cap \{ \text{NC} \}  } g(V, \vb{n}_H) \; \iota_{\vb{n}_H} \omega ,
    \]
    and $g(V, \vb{n}_H) = \|V\| \|\vb{n}_H \| \cos \vartheta \leq 1$. Thus,
    \[
    \oint_{\partial A} \iota_V \omega \leq \oint_{\partial A \cap \{ \text{NC} \}  } \iota_{\vb{n}_H} \omega = \sigma(\partial A).
    \]
    The conclusion is that $\frac{\sigma(\partial A)}{\omega (A)} \geq h$, but the test set $A$ is arbitrary, so the same holds for the Cheeger constant. 
\end{proof}

We now extend this proof to the Neumann case. Let $\Sigma$ be a hypersurface cutting $\Omega$ into two connected components $\Omega_1$ and $\Omega_2$. Without loss of generality, let $\Omega_1$ have smallest volume among them. Then, the boundary of $\Omega_1$ consists of parts coming from $\partial \Omega$ and parts coming from $\Sigma$. In fact, if we can assume that 
\begin{equation}\label{eq:inward}
    \int_{\partial \Omega \cap \partial \Omega_1} \iota_V  \omega \leq 0,
\end{equation}
then it follows that 
\[
\sigma (\Sigma) \geq \int_{\Sigma} \iota_V \omega \geq \int_{\Sigma} \iota_V \omega + \int_{\partial \Omega \cap \partial \Omega_1} \iota_V \omega = \int_{\partial \Omega_1} \iota_V \omega.
\]
Notice that equation (\ref{eq:inward}) always holds if we choose $V$ to be inward pointing along the boundary $\partial \Omega$.
By repeating the Dirichlet argument, we then have 
\[
\sigma(\Sigma) \geq \int_{\partial \Omega_1} \iota_V \omega \geq h \cdot \omega(\Omega_1) = h \cdot \min (\omega (\Omega_1), \omega (\Omega_2)). 
\]
As the separating hypersurface was arbitrary, we conclude that $h_N(\Omega) \geq h$. 
We have hence deduced:

\begin{theorem}[Min flow max cut, Neumann version]
     Let $M$ be a CC-space, and let $\Omega \subseteq M$ a bounded connected domain with piecewise smooth boundary. Let $V \in \mathfrak{X}_H(\overline{\Omega})$ be a horizontal vector field which is inward-pointing along $\partial \Omega$. Let $h \in \mathbb{R}$ such that $\|V\| \leq 1$ and $\di_\omega (V) \geq h$ pointwise in $\Omega$. Then, $h_N(\Omega) \geq h$. 
\end{theorem}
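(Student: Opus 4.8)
The plan is to mirror, essentially verbatim, the Dirichlet max-flow min-cut argument (Theorem \ref{thm:MFMC}) while carefully handling the part of $\partial\Omega_1$ that comes from $\partial\Omega$. Let $V\in\mathfrak{X}_H(\overline\Omega)$ satisfy $\|V\|\le 1$, $\di_\omega(V)\ge h$ in $\Omega$, and be inward-pointing along $\partial\Omega$. Fix a piecewise smooth hypersurface $\Sigma\subseteq\Omega$ separating $\Omega$ into disjoint open sets $\Omega_1,\Omega_2$, and relabel so that $\omega(\Omega_1)\le\omega(\Omega_2)$; then $\min\{\omega(\Omega_1),\omega(\Omega_2)\}=\omega(\Omega_1)$. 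The boundary $\partial\Omega_1$ decomposes as $\partial\Omega_1=(\partial\Omega_1\cap\Sigma)\cup(\partial\Omega_1\cap\partial\Omega)$ up to a set of $\sigma$-measure zero.

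First I would apply the divergence theorem to $V$ on $\Omega_1$, so that $\int_{\partial\Omega_1}\iota_V\omega=\int_{\Omega_1}\di_\omega(V)\,\omega\ge h\,\omega(\Omega_1)$, exactly as in the Dirichlet case (here using that $\partial\Omega_1$ is piecewise smooth and that $V$ is smooth up to the boundary). Next I would split the left-hand integral along the two pieces of $\partial\Omega_1$. On the piece $\partial\Omega_1\cap\partial\Omega$, since $V$ is inward-pointing along $\partial\Omega$ — meaning $g(V,\vb{n}_H)\le 0$ for the outward horizontal normal at non-characteristic points, and the integrand vanishes at characteristic points by the same linear-dependence argument as in the proof of Theorem \ref{thm:MFMC} — we get $\int_{\partial\Omega_1\cap\partial\Omega}\iota_V\omega\le 0$, which is precisely inequality (\ref{eq:inward}). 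Hence $\int_{\partial\Omega_1\cap\Sigma}\iota_V\omega\ge\int_{\partial\Omega_1}\iota_V\omega\ge h\,\omega(\Omega_1)$.

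Then I would bound $\int_{\partial\Omega_1\cap\Sigma}\iota_V\omega$ from above by $\sigma(\Sigma)$. At a non-characteristic point $p\in\Sigma$, decompose $V_p=a\,\vb{n}_H(p)+W$ with $W\in\mathcal{D}_p\cap T_p\Sigma$; then $\iota_W\omega$ restricts to zero on $\Sigma$, so $\iota_V\omega=a\,\iota_{\vb{n}_H}\omega$ on $\Sigma$ with $a=g(V,\vb{n}_H)=\|V\|\|\vb{n}_H\|\cos\vartheta\le 1$, while at characteristic points the form vanishes. Therefore $\int_{\partial\Omega_1\cap\Sigma}\iota_V\omega\le\int_{\Sigma}\iota_{\vb{n}_H}\omega=\sigma(\Sigma)$ (with the sign of $\vb{n}_H$ chosen as in the definition of $\sigma$). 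Chaining these gives $\sigma(\Sigma)\ge h\,\omega(\Omega_1)=h\min\{\omega(\Omega_1),\omega(\Omega_2)\}$, i.e. $\frac{\sigma(\Sigma)}{\min\{\omega(\Omega_1),\omega(\Omega_2)\}}\ge h$; since $\Sigma$ was arbitrary, taking the infimum yields $h_N(\Omega)\ge h$.

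The main obstacle, and the only point requiring care beyond copying the Dirichlet proof, is the bookkeeping on $\partial\Omega_1\cap\partial\Omega$: one must (i) make precise what ``inward-pointing'' means at characteristic points of $\partial\Omega$ and check the flux integrand vanishes there, and (ii) confirm that the orientation conventions make (\ref{eq:inward}) hold with the correct sign, so that dropping this piece only increases the flux through $\Sigma$. Everything else is the already-established divergence-theorem estimate and the pointwise Cauchy–Schwarz bound $g(V,\vb{n}_H)\le\|V\|\le 1$.
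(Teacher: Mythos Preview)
Your proposal is correct and follows essentially the same approach as the paper: take $\Omega_1$ of smaller volume, split $\partial\Omega_1$ into the $\Sigma$-part and the $\partial\Omega$-part, use the inward-pointing condition to drop the latter (this is exactly the paper's inequality (\ref{eq:inward})), and then apply the divergence theorem and the pointwise bound $g(V,\vb n_H)\le 1$ as in the Dirichlet case. Your write-up is in fact more explicit than the paper's about the treatment of characteristic points and the meaning of ``inward-pointing'', but the argument is the same.
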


\section{Examples}\label{sec:examples}

In this last section, we discuss how our main result applies to Carnot groups and the (Baouendi-)Grushin structure on a cylinder. For Carnot groups, we relate our results to previous work on coarea formulas. We are able to replace the surface measure with the spherical Hausdorff measure, in the process getting a different geometric constant in the Cheeger inequality. In the case of Grushin, we compute explicitly the spectrum of a cylinder and give an upper bound on the corresponding Cheeger constant. 

\subsection{Carnot groups}\label{sec:carnot}

\begin{definition}
    A connected and simply connected Lie group $G$ is called a \emph{Carnot group} if its Lie algebra $\mathfrak{g}$ admits a stratification, i.e. a direct sum decomposition $\mathfrak{g} =  V_1 \oplus \dots \oplus V_s$ such that $[V_1,V_i] = V_{i + 1}$ for  $i = 1,2,\dots,s-1,$ and such that $[V_1,V_s] = 0$. The integer $s$ is called the \emph{step} of the stratification. 
\end{definition}

Choosing a basis $(\xi_1 , \dots, \xi_m)$ for $V_1 \subseteq \mathfrak{g}$, we obtain globally defined left-invariant vector fields $X_1 , \dots, X_m \in \mathfrak{X}(G)$. These vector fields equip $G$ with the structure of a CC-space, where the generating vector fields are $X_1 , \dots , X_m$. \\

A Carnot group may be identified with $\mathbb{R}^n$ equipped with a non-Abelian group operation. To see this, extend $(\xi_1 , \dots , \xi_m)$ to an adapted basis for $\mathfrak{g} = V_1 \oplus \dots \oplus V_s$, which we denote by $(\xi^1_{1}, \dots , \xi^1_{m_1}, \dots , \xi^s_{1}, \dots , \xi^s_{m_s})$. Denote the corresponding left-invariant vector fields by $X^i_{j}$ and note that $X^1_j = X_j$. Because $G$ is connected and simply connected, the exponential map
\begin{equation}
	F : \mathbb{R}^{m_1} \times \dots \times \mathbb{R}^{m_s} \to G, \qquad  F(x) := \exp ( \sum_{i = 1}^s \sum_{j_i = 1}^{m_i} x^j_{j_i} \xi^j_{j_i} ),  
\end{equation}
with $x = (x^1 , \dots , x^s) \in \mathbb{R}^{m_1} \times \dots \times \mathbb{R}^{m_s}$ is a diffeomorphism. We refer to the map $F$ as a system of \textit{graded exponential coordinates}. Furthermore, we can use $F$ to identify the Lie group $G$ with $(\mathbb{R}^n , *)$, where the group operation $* : \mathbb{R}^n \times \mathbb{R}^n \to \mathbb{R}^n$ can be computed explicitly by using the Campbell-Baker-Hausdorff formula \cite{cassano2016some}. \\

Carnot groups come equipped with two natural families of maps: translations and dilations. For $x \in G$, we define the (left-)translation 
\begin{equation}
	\tau_x : G \to G, \qquad  \tau_x (g) = xg. 
\end{equation}
Dilations are first defined at the level of the Lie algebra $\mathfrak{g} = V_1 \oplus \dots \oplus V_s$. An element $\xi \in \mathfrak{g}$ can be written uniquely as $\xi = \sum_{i = 1}^s \xi_i$ with $\xi_i \in V_i$. We then define for $r > 0$ the dilation $\delta_r : \mathfrak{g} \to \mathfrak{g}$ by 
\begin{equation}
	\delta_r (\xi) := \sum_{i = 1}^s r^i \xi_i.
\end{equation}
It can be proven that $\delta_r$ is an automorphism of $\mathfrak{g}$ with inverse $\delta_{1/r}$. Moreover, there is a group homomorphism ${\delta}_r : G \to G$ defined by forcing the diagram 
\[
\begin{tikzcd}
	\mathfrak{g} \arrow[r, "\delta_r"] \arrow[d, "\exp"'] & \mathfrak{g} \arrow[d, "\exp"] \\
	G \arrow[r, "\delta_r"]                      & G                    
\end{tikzcd}
\]
to commute \cite{srgmt}. We again use the same symbol $\delta_r$ to define the dilation on the group level.  \\

Let $G$ be a Carnot group, and let $d : G \times G \to [0,\infty)$ be a continuous map that makes $G$ into a metric space. We say that $d$ is a \textit{homogeneous distance} on $G$ if it respects the translations and dilations, i.e. if 
\begin{enumerate}
	\item $d(x,y) = d(\tau_g x, \tau_g y)$ for all $x,y,g \in G$;
	\item $d(\delta_r x, \delta_r y) = r d(x,y)$ for all $x,y \in G$ and all $r > 0$. 
\end{enumerate}
We note that the Carnot-Carath\'eodory distance $d_{CC}$ is always a homogeneous distance on $G$ \cite[Proposition 2.3.39]{srgmt}. In some situations, it may pay off to choose a different homogeneous distance on $G$. \\

Let $G$ be a Carnot group and fix an adapted basis $(\xi^1_{1}, \dots , \xi^1_{m_1}, \dots , \xi^s_{1}, \dots , \xi^s_{m_s})$ for $\mathfrak{g} = V_1 \oplus \dots \oplus V_s$. Taking graded exponential coordinates with respect to this basis identifies $G$ with $\mathbb{R}^{m_1} \times \dots \times \mathbb{R}^{m_s}$, where $m_j = \dim(V_j)$. We then have the following: 

\begin{theorem}[cf. \cite{franchi2003structure} Theorem 5.1]
	Let $G$ be a Carnot group and fix graded exponential coordinates as above. Then, there exist constants $\varepsilon_1 , \dots , \varepsilon_s \in (0,1]$ depending only on the group structure such that, defining 
	\begin{equation}
		d_\infty(p,0) = \max \left\{ \varepsilon_j \left( |p_j|_{\mathbb{R}^{m_j}} \right)^{1/j}, \; j = 1,2,\dots,s \right\},
	\end{equation}
	with $ p = (p_1 , \dots , p_s), \; p_j \in \mathbb{R}^{m_j}$ and 
	\begin{equation}
		d_\infty(p,q) = d_\infty(q^{-1} p , 0),
	\end{equation}
	gives a homogeneous distance on $G$. 
\end{theorem}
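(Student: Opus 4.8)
The plan is to verify the two defining axioms of a homogeneous distance for $d_\infty$: the triangle inequality together with positivity (to make it an honest distance), left-invariance, and $1$-homogeneity under dilations. Left-invariance is built into the definition via $d_\infty(p,q) = d_\infty(q^{-1}p, 0)$, so that axiom is automatic. The content is therefore in (i) showing $d_\infty(\cdot, 0)$ is a genuine homogeneous norm — i.e. it is continuous, vanishes only at the origin, is symmetric under inversion, and satisfies a triangle-type inequality — and (ii) checking the scaling relation $d_\infty(\delta_r p, 0) = r\, d_\infty(p,0)$.

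First I would dispatch the scaling relation, since it is essentially a bookkeeping computation: in graded exponential coordinates the dilation acts on the $V_j$-block by $p_j \mapsto r^j p_j$, so $|(\delta_r p)_j|_{\mathbb{R}^{m_j}} = r^j |p_j|_{\mathbb{R}^{m_j}}$ and hence $\varepsilon_j(|(\delta_r p)_j|)^{1/j} = r\,\varepsilon_j(|p_j|)^{1/j}$; taking the max over $j$ and then using $d_\infty(\delta_r p, \delta_r q) = d_\infty((\delta_r q)^{-1}\delta_r p, 0) = d_\infty(\delta_r(q^{-1}p), 0)$ (because $\delta_r$ is a group homomorphism) gives $d_\infty(\delta_r p, \delta_r q) = r\, d_\infty(q^{-1}p,0) = r\, d_\infty(p,q)$. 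Continuity of $d_\infty(\cdot,0)$ and the fact that it vanishes only at $p = 0$ are immediate from the explicit formula, once one knows that $p \mapsto q^{-1}p$ is continuous in these coordinates (which follows from the polynomial nature of $*$ coming from Campbell–Baker–Hausdorff).

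The main obstacle is the triangle inequality $d_\infty(p,q) \leq d_\infty(p,r) + d_\infty(r,q)$, equivalently, by left-invariance, the quasi-subadditivity-type statement $d_\infty(p*p', 0) \leq d_\infty(p,0) + d_\infty(p',0)$. This does not hold for arbitrary choices of the $\varepsilon_j$: the point of the constants $\varepsilon_1, \dots, \varepsilon_s \in (0,1]$ is precisely to absorb the lower-order corrections in the group law. Concretely, the $j$-th block of $p * p'$ has the form $p_j + p'_j + Q_j(p_1,\dots,p_{j-1}, p'_1,\dots,p'_{j-1})$, where $Q_j$ is a polynomial, homogeneous of weighted degree $j$, in the lower-order coordinates; one estimates $|Q_j|$ by a sum of monomials $\prod |p_i|^{a_i}|p'_i|^{b_i}$ with $\sum i(a_i + b_i) = j$, bounds each factor $|p_i|^{a_i}$ by $(\varepsilon_i^{-1} d_\infty(p,0))^{i a_i}$, and then uses a generalized Young/AM–GM inequality to compare $\big(\sum \text{such terms}\big)^{1/j}$ with $d_\infty(p,0) + d_\infty(p',0)$. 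Choosing each $\varepsilon_j$ small enough — inductively on $j$, since $Q_j$ only involves lower blocks — makes the constant in front of $\big(|p_j + p'_j + Q_j|\big)^{1/j}$ at most $1$. This is the step I expect to require the most care: tracking the combinatorics of the CBH polynomials and verifying that a single admissible choice of $(\varepsilon_1,\dots,\varepsilon_s)$ works uniformly. The cited reference \cite{franchi2003structure} carries out exactly this construction, so in the write-up I would state the scaling and left-invariance in detail and then invoke Theorem 5.1 of \cite{franchi2003structure} for the triangle inequality, indicating the inductive choice of the $\varepsilon_j$ as above.

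Finally, symmetry $d_\infty(p,q) = d_\infty(q,p)$ follows by applying the already-established triangle inequality and left-invariance to $d_\infty(q^{-1}p,0)$ and $d_\infty(p^{-1}q,0) = d_\infty((q^{-1}p)^{-1},0)$ together with the observation that inversion acts on each block as $p_j \mapsto -p_j + (\text{lower order})$, so the same polynomial estimate shows $d_\infty(p^{-1},0) = d_\infty(p,0)$ for the chosen $\varepsilon_j$; this gives $d_\infty$ the structure of a metric, completing the verification that it is a homogeneous distance on $G$.
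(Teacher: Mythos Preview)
The paper does not prove this theorem at all: it is stated with the citation ``cf.\ \cite{franchi2003structure} Theorem 5.1'' and used as a black box. Your proposal goes further than the paper by sketching the standard proof --- verifying homogeneity and left-invariance directly, and outlining the inductive choice of the $\varepsilon_j$ to force the triangle inequality from the CBH polynomial estimates --- before invoking the same reference for the details; this is a correct outline of how the result is actually established in \cite{franchi2003structure}.
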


Using the distance function $d_\infty$ on $G$ allows for a generalized coarea formula, where the surface measure $\sigma$ is replaced by the spherical Hausdorff measure. 

\begin{definition}
    Let $G$ be a Carnot group with stratification $\mathfrak{g} = V_1 \oplus \dots \oplus V_s$. Define the \emph{homogeneous dimension} of $G$ as 
    \[
    Q = \sum_{j = 1}^s j \cdot \dim(V_j). 
    \]
    Define further for $a \geq 0$ and $t > 0$ the measures 
    \[
    \Phi^a_t (E) = \frac{\omega_a}{2^a} \inf \left\{  \sum_{i = 1}^\infty \mathrm{diam}(B_{r_i}(p_i)) : E \subseteq \bigcup_{i = 1}^\infty B_{r_i}(p_i), \; \mathrm{diam} (B_{r_i}(p_i)) \leq t\right \},
    \]
    where $E \subseteq G$ is measurable, $p_i \in G$, $B_{r_i}(p_i)$ are open $d_\infty$-balls, and $\omega_a = \frac{\pi^{a/2}}{\Gamma(1 + a/2)}$ is the volume of a Euclidean unit ball in dimension $a$. The \emph{$a$-dimensional spherical Hausdorff measure} is defined by 
    \[
    \mathscr{S}^{a}(E) = \lim\limits_{t \to \infty} \Phi^a_t(E). 
    \]
\end{definition}

With the above definitions, we have \cite{magnani2005coarea}:

\begin{theorem}[Generalized coarea formula]
    Let $G$ be a Carnot group of homogeneous dimension $Q$, let $A \subseteq G$ be a measurable set and let $u : A \to \mathbb{R}$ be Lipschitz continuous. Then, for any measurable function $h : A \to [0,\infty]$, we have 
    \begin{equation}
        \int_A h(x) |\nabla_H u(x)| \dd{x} = \alpha_{Q - 1} \int_{\mathbb{R}} \int_{u ^{-1} (s) \cap A} h(x) \dd{\mathscr{S}^{Q - 1}}(x) \dd{s}, 
    \end{equation}
    where $\mathscr{S}^{Q - 1}$ is the $(Q - 1)$-dimensional spherical Hausdorff measure, and $\alpha_{Q - 1}$ is a geometric constant depending only on the group structure. The measure $\dd{x}$ is the Lebesgue measure on $\mathbb{R}^n \simeq G$. 
\end{theorem}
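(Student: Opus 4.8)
\smallskip

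The plan is to reduce the statement to an identity between Radon measures and then combine the coarea formula for the horizontal variation from Section~\ref{sec:preliminaries} with the structure theory of sets of finite horizontal perimeter. First, writing a nonnegative measurable $h$ as an increasing pointwise limit of simple functions and exhausting an unbounded $A$ by the sets $A\cap B_k(0)$, monotone convergence reduces the claim to $h=\chi_E$ with $E\subseteq A$ and $A$ bounded. Since $u$ is Lipschitz, $|\nabla_H u|\in L^\infty$, so the theorem becomes the identity of finite, Borel-regular Radon measures
\begin{equation}\label{eq:coarea-measure-id}
    |\nabla_H u|\;\dd{x}\;\llcorner\; A \;=\; \alpha_{Q-1}\int_{\mathbb{R}}\mathscr{S}^{Q-1}\llcorner\bigl(u^{-1}(s)\cap A\bigr)\,\dd{s}.
\end{equation}
That the right-hand side is even locally finite is the elementary half of the coarea inequality: covering $u^{-1}(s)\cap A$ by small $d_\infty$-balls $B_{r_i}(p_i)$ and using that the slabs $u^{-1}\bigl((s-Lr_i,s+Lr_i)\bigr)$ with $L=\mathrm{Lip}(u)$ cover a neighborhood of the level set, the scaling $\omega(B_r)\asymp r^{Q}$, $\mathrm{diam}(B_r)\asymp r$ converts $\sum_i r_i^{Q-1}$ into a volume bound after integrating in $s$.

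To pin down \eqref{eq:coarea-measure-id} with its constant, I would use that on a Carnot group $\omega$ is Lebesgue measure and that $\Var_H(u;\Omega)=\int_\Omega|\nabla_H u|\,\dd{x}$ (Proposition~\ref{propn:h-var-h-grad}, extended to Lipschitz $u$ by approximation). The coarea formula \eqref{eq:co-area-var} then gives, for every bounded open $\Omega$, that $\int_\Omega|\nabla_H u|\,\dd{x}=\int_{\mathbb{R}}P_H(\{u>t\};\Omega)\,\dd{t}$. The essential input is now the Franchi--Serapioni--Serra Cassano structure theorem, in the form adapted by \cite{magnani2005coarea} to the box distance $d_\infty$: there is a constant $\alpha_{Q-1}>0$, the metric factor of $d_\infty$, such that $P_H(F;\cdot)=\alpha_{Q-1}\,\mathscr{S}^{Q-1}\llcorner\partial^{*}_{H}F$ for every set $F$ of locally finite horizontal perimeter, $\partial^{*}_{H}F$ the essential horizontal boundary. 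Substituting $F=\{u>t\}$ yields
\begin{equation}\label{eq:after-sub-sketch}
    \int_\Omega|\nabla_H u|\;\dd{x} \;=\; \alpha_{Q-1}\int_{\mathbb{R}}\mathscr{S}^{Q-1}\bigl(\partial^{*}_{H}\{u>t\}\cap\Omega\bigr)\,\dd{t}.
\end{equation}

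The final step is to replace $\partial^{*}_{H}\{u>t\}$ by the topological level set $u^{-1}(t)$ for almost every $t$. One inclusion is immediate: $u$ is continuous, so $\partial\{u>t\}\subseteq u^{-1}(t)$, and $\partial^{*}_{H}F\subseteq\partial F$ up to $\mathscr{S}^{Q-1}$-null sets. For the reverse, the horizontal critical set $C=\{\nabla_H u=0\}$ carries no left-hand mass in \eqref{eq:coarea-measure-id}, so the covering estimate above forces $\mathscr{S}^{Q-1}(u^{-1}(s)\cap C)=0$ for a.e.\ $s$; away from $C$ the function $u$ is Euclidean-Lipschitz and horizontally non-degenerate, and a blow-up/implicit-function argument identifies $u^{-1}(s)\setminus C$, for a.e.\ $s$, with a countable union of $H$-regular hypersurfaces agreeing $\mathscr{S}^{Q-1}$-a.e.\ with $\partial^{*}_{H}\{u>s\}$. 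Hence the two sides of \eqref{eq:coarea-measure-id} have equal mass on every open $\Omega$; being Radon, they coincide, and the reductions of the first paragraph give the general statement.

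The hard part is exactly this last step --- controlling the level sets of a merely Lipschitz function and showing that, for almost every value, the topological level set agrees up to $\mathscr{S}^{Q-1}$-negligible sets with the essential boundary of the corresponding superlevel set, with no mass escaping onto $C$. This is the sub-Riemannian version of the delicate ``Sard/coarea for Lipschitz maps'' phenomenon, and it is precisely where the fine structure theory of finite-perimeter sets in Carnot groups and the choice of the box distance $d_\infty$ (for which the metric factor is a genuine constant rather than a varying density) are indispensable; the remaining passage from test sets to arbitrary measurable $A$ and $h$ is routine.
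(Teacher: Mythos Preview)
The paper does not prove this theorem at all: it is stated as a black-box result from the literature, with the single citation \cite{magnani2005coarea} immediately preceding the statement. There is therefore no ``paper's own proof'' to compare your attempt against.

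Your sketch is a reasonable outline of how the result is actually established in the reference: reduce to the measure identity, invoke the $BV_H$ coarea formula, apply the Franchi--Serapioni--Serra Cassano structure theorem to rewrite $P_H$ as $\alpha_{Q-1}\mathscr{S}^{Q-1}$ on the reduced boundary, and then identify $\partial^{*}_{H}\{u>t\}$ with $u^{-1}(t)$ for almost every $t$. You are also right that the last step is the genuinely difficult one, and your discussion of it is honest but thin --- the ``blow-up/implicit-function argument'' and the Sard-type statement for the horizontal critical set are exactly where the heavy lifting in \cite{magnani2005coarea} occurs, and a full proof requires substantially more than what you have written. Since the paper treats the theorem as an input rather than a result to be proved, your level of detail already exceeds what the paper demands.
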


It is possible to repeat the proof of the Cheeger inequality using the spherical Hausdorff measure as the surface measure, i.e. $\sigma(\Sigma) = \int \chi_{\Sigma}(x) \dd{\mathscr{S}^{Q - 1}}(x)$. Moreover, assumption \textbf{(C)} is automatically satisfied for Carnot groups, since the vector fields defining the CC-structure have polynomial coefficients \cite[Proposition 2.4]{cassano2016some}. Thus:

\begin{theorem}
    Let $G$ be a Carnot group and $\Omega \subseteq G$ a connected bounded domain with piecewise smooth boundary. Then, 
    \[
    \lambda_2^N(\Omega) \geq \frac{\alpha_{Q - 1}^2}{4} h_N^2(\Omega),
    \]
    where the Neumann-Cheeger constant is defined with respect to the surface measure $\sigma$ defined above. 
\end{theorem}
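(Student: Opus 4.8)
The plan is to rerun the entire chain of arguments from Sections~\ref{sec:DC}--\ref{sec:NC}, but with the horizontal perimeter $P_H(\cdot\,;\Omega)$ and surface measure $\sigma$ replaced by the spherical Hausdorff measure $\mathscr{S}^{Q-1}$, keeping careful track of the geometric constant $\alpha_{Q-1}$ that the generalized coarea formula introduces. Concretely, I would first fix $G$, graded exponential coordinates, and the homogeneous distance $d_\infty$, and \emph{redefine} the surface measure of a smooth hypersurface $\Sigma$ to be $\sigma_{\mathscr{S}}(\Sigma) := \mathscr{S}^{Q-1}(\Sigma)$; correspondingly redefine $h_N(\Omega)$ using this surface measure in the numerator. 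Since the vector fields $X_1,\dots,X_m$ on a Carnot group have polynomial coefficients \cite[Proposition 2.4]{cassano2016some}, assumption \textbf{(C)} is automatic, so Courant's theorem and Lemma~\ref{lem:nodalneumann} apply; thus $\lambda_2^N(\Omega)=\lambda_1^Z(\Omega_+,Z_u)$ exactly as before, with $\Omega_\pm$ the two nodal domains of a $\lambda_2^N$-eigenfunction and $\omega(\Omega_+)\le\omega(\Omega_-)$.

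The heart of the argument is the modified Cheeger lemma: for $f\ge 0$ smooth with appropriate boundary behaviour, I would apply the generalized coarea formula (with $h\equiv 1$, $A=\Omega$, and Lebesgue measure $\dd x$ playing the role of $\omega$) to get $\int_\Omega |\nabla_H f|\,\dd x = \alpha_{Q-1}\int_{\mathbb{R}} \mathscr{S}^{Q-1}(f^{-1}(s)\cap\Omega)\,\dd s$. For a.e.\ $s$ the level set $f^{-1}(s)$ is a hypersurface bounding the superlevel set $\{f>s\}$, so the mixed Cheeger constant $h_Z(\Omega_+,Z_u)$ (defined with $\sigma_{\mathscr S}$) bounds $\mathscr{S}^{Q-1}(f^{-1}(s)\cap\Omega)$ from below by $h_Z(\Omega_+,Z_u)\,\omega(\{f>s\})$; combined with the layer cake formula this yields $\int_{\Omega_+}|\nabla_H f|\,\dd x \ge \alpha_{Q-1}\,h_Z(\Omega_+,Z_u)\int_{\Omega_+} f\,\dd x$. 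Feeding $f = u_+^2$ into this and using the min--max estimate $\int_{\Omega_+}|\nabla_H(u_+^2)|\,\dd x \le 2\sqrt{\lambda_1^Z(\Omega_+,Z_u)}\,\|u_+\|^2$ gives $\lambda_1^Z(\Omega_+,Z_u)\ge \tfrac{1}{4}\alpha_{Q-1}^2\,h_Z(\Omega_+,Z_u)^2$, i.e.\ the mixed Cheeger inequality with the extra factor $\alpha_{Q-1}^2$. Finally, Lemma~\ref{lem:comparisonDN} --- whose proof is purely a comparison of infima over hypersurfaces and sets, and so goes through verbatim once the numerators on both sides use $\sigma_{\mathscr S}$ --- gives $h_Z(\Omega_+,Z_u)\ge h_N(\Omega)$, and chaining the three inequalities as in the proof of the Neumann--Cheeger inequality produces $\lambda_2^N(\Omega)=\lambda_1^Z(\Omega_+,Z_u)\ge \tfrac{\alpha_{Q-1}^2}{4}h_N(\Omega)^2$.

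The step I expect to require the most care is the interface between the two measures on hypersurfaces. The generalized coarea formula is stated for the Lebesgue measure $\dd x$ on $\mathbb{R}^n\simeq G$, so I must either observe that the natural volume form $\omega$ on a Carnot group (the left Haar measure in graded coordinates) \emph{is} a constant multiple of Lebesgue measure --- which is true, so the constant can be absorbed --- or else carry the Jacobian through; and I must check that $\mathscr{S}^{Q-1}$ restricted to a smooth noncharacteristic hypersurface is a Radon measure comparable to (indeed, by the results behind \cite{magnani2005coarea}, a fixed multiple of) the measure $\int\iota_{\mathbf n_H}\omega$ used in the body of the paper, so that ``hypersurface with finite $\sigma_{\mathscr S}$-measure'' and ``piecewise smooth hypersurface'' interact correctly in the definitions of $h_Z$ and $h_N$. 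A secondary technical point is that the nodal set $Z_u$ of an eigenfunction need only be a smooth hypersurface away from a lower-dimensional set, so I would note (as in \cite{ES}) that the characteristic points and singular part of $Z_u$ are $\mathscr{S}^{Q-1}$-negligible, so they do not affect any of the integrals above. Once these measure-theoretic compatibilities are in place, every remaining step is a verbatim transcription of the corresponding argument in Sections~\ref{sec:DC}--\ref{sec:NC} with $\alpha_{Q-1}$ tracked through the single application of the coarea formula.
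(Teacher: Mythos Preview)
Your proposal is correct and matches the paper's own treatment exactly: the paper does not give a detailed proof but simply remarks that one can repeat the chain of arguments from Sections~\ref{sec:DC}--\ref{sec:NC} with the generalized coarea formula (which introduces the factor $\alpha_{Q-1}$) in place of~(\ref{eq:coarea}), noting that assumption \textbf{(C)} is automatic for Carnot groups. Your write-up is in fact more detailed than the paper's, and the measure-theoretic caveats you flag (Haar measure versus Lebesgue, regularity of $Z_u$) are reasonable points of care that the paper leaves implicit.
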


\begin{remark}
    In the Heisenberg group $\mathbb{H}^{2n + 1}$, the distance $d_\infty$ and the geometric constant can be calculated explicitly \cite[Corollary 6.5.5]{srgmt}: 
    \[
    \alpha_{Q - 1} = \frac{2 \omega_{2n - 1}}{\omega_{Q - 1}}. 
    \]
    In the simplest case of $\mathbb{H}^3$, the constant equals $\frac{3}{\pi}$, leading to 
    \[
    \lambda^N_2(\Omega) \geq \frac{1}{4} \left( \frac{3}{\pi} \right)^2 h_N^2(\Omega).
    \]
    The constant in the Cheeger inequality is thus slightly worse than in the Euclidean case. The homogeneous distance $d_\infty$ is explicitly given by using the bijection $$\mathbb{H}^{2n+1}\to \mathbb{C}^n \times \mathbb{R} :  (x,y,t) \mapsto (x_1 + i y_1,\dots,x_n + i y_n,t) .$$ In those coordinates, the group operation reads $$(z,t)(z',t') = (z + z', t + t' + 2 \Im \langle z,z'\rangle)$$
    and $d_\infty (p,q) := N(p ^{-1} q)$ where $p ^{-1} q = (z,t) \in \mathbb{C}^n \times \mathbb{R}$ and $N(z,t) = \max \{ |z|, |t|^{1/2} \}$. The metric $d_\infty$ is a natural distance function on the Heisenberg group. For example, it is used in finding a fundamental solution for the sub-Laplacian \cite{bramanti2014invitation}.  
\end{remark}

\subsection{Grushin structure}

On the manifold $M = \mathbb{R} \times \mathbb{S}^1$, consider the vector fields $X = \pdv{x}$ and $Y = x \pdv{y}$. These vector fields are the generating family of a CC-structure on $M$. We take the smooth volume form $\omega = \dd{x} \wedge \dd{y}$. We remark that this is not the Riemannian volume obtained by declaring $(X,Y)$ to be an orthonormal frame. That volume form would blow up at $x = 0$. With the smooth volume form $\omega$, the sub-Laplacian is sometimes called a Baouendi-Grushin type operator, see for example \cite{letrouit2023observability}. \\

The sub-Laplacian corresponding to $\omega$ is given by \[
\Delta u = \pdv[2]{u}{x} + x^2 \pdv[2]{u}{y} = - \lambda u. 
\]
Separating the variables $u(x,y) = v(x) w(y)$, we obtain $w(y) = e^{iny}$ with $n \in \mathbb{Z}$ and $v(x)$ satisfies the ODE 
\begin{equation}\label{eq:theODE}
    v''(x) + (\lambda - n^2 x^2) v(x) = 0. 
\end{equation}
Suppose we want to solve the Neumann problem on $(0,1) \times \mathbb{S}^1$, i.e. we impose that $v'(0) = v'(1) = 0$. When $n = 0$ we have the solutions $u(x,y) = v(x) = \cos(m \pi x)$ for $m = 0,1,2, \dots$, with corresponding eigenvalues $\lambda_{0,m} = m^2 \pi^2$.  For positive $n$, the change of variables
\begin{equation}
    x = 4^{-1/4} \frac{\xi}{\sqrt{n}},
\end{equation}
transforms the ODE (\ref{eq:theODE}) into 
\begin{equation}
    w''(\xi) - \left( \frac{1}{4} \xi^2 - \frac{\lambda}{2n} \right) w(\xi) = 0,
\end{equation}
which has linearly independent solutions \cite{abramowitz1988handbook}
\begin{equation}
    w_\pm(\xi) = D_{- \lambda/2n - 1/2}(\pm i \xi),
\end{equation}
where $D_\nu(z)$ denotes the parabolic cylinder function. Defining $w_1(\xi) = w_+(\xi) + w_-(\xi)$ and $w_2(\xi) = w_+(\xi) - w_- (\xi)$, we have 
\[
w_1'(0) = 0, \qquad w_2(0) = 0,
\]
so that enforcing the Neumann boundary conditions gives 
\begin{equation}
    w(\xi) = c w_1(\xi), \qquad w_1'(4^{1/4} \sqrt{n} ) = 0.
\end{equation}
The remaining equation $w'(4^{1/4} \sqrt{n} ) = 0$ can be solved numerically for $\lambda$. Approximate values for $\lambda_{n,m}$ are displayed in the table below. We see that the Neumann-Cheeger inequality in this case pertains to the eigenvalue $\lambda_{1,0} \approx 0.325$.

\begin{table}[ht]
\begin{tabular}{|l||l|l|l|}
\hline                                                 & $n$ = 0  & $n$ = 1  & $n$ = 2    \\ \hline \hline 
$m$ = 0                                            & 0      & 0.325  & 1.203    \\ \hline 
$m$ = 1                                            & 9.870  & 10.26  & 11.504   \\ \hline 
$m$  = 2 & 39.478 & 39.825 & 40.877 \\ \hline 
\end{tabular}
\end{table}

Let us consider the Neumann-Cheeger constant for the cylinder. There are two obvious ways of cutting the cylinder into two parts: Either along a circle $\{h\} \times \mathbb{S}^1$ with $h \in (0,1)$ or along two straight lines running up the cylinder $(0,1) \times \{\varphi_1,\varphi_2\}$ with $\varphi_1,\varphi_2 \in \mathbb{S}^1$. It is easily seen that the optimum among these two ways of cutting is obtained when $\varphi_1,\varphi_2$ are chosen to be diametrically opposite. In this case, one has
\[
\frac{\sigma((0,1) \times \{\varphi_1,\varphi_2\})}{\min (\omega(\Omega_1), \omega(\Omega_2))} =\frac{2}{\pi},
\]
so that $h_N(M) \leq \frac{2}{\pi} \approx 0.637$. Presuming that this is an equality, we would have 
\[
\lambda_{2}^N(M) = \lambda_{1,0} \approx 0.325 \geq \frac{1}{4} h_N(M)^2 \approx 0.10.
\]

Finally, let us give an example of application of Theorem \ref{thm:MFMC}. The vector field $V = x \pdv{x}$ satisfies $\di_\omega (V) = 1$ and $|V| = |x| \leq 1$. Thus,
\[ h_D(M) \geq 1. \]
By similar techniques as above, one can numerically determine the Dirichlet spectrum of $M$. The lowest eigenvalue corresponds to $n = 0$ with eigenfunction $u(x,y) = \sin(\pi x)$. This is clearly an eigenfunction, and it corresponds to $\lambda_1^D(M)$ because it is strictly positive in the interior. We see that 
\[
\pi^2 = \lambda_1^D(M) \geq \frac{1}{4} h_D^2(M) \geq \frac{1}{4}.
\]

Note that this vector field cannot be used to draw conclusions about $h_N$, as it is outward-pointing along the upper boundary of the cylinder. We postpone finding sharper bounds for the Cheeger constants to future research. \\

{\bf Acknowledgements.}  I would like to thank my PhD supervisor Marcello Seri for his fantastic guidance and support. 
Moreover, thanks to Dario Prandi and Romain Petrides for useful discussions during the development of this work and for their hospitality while I visited Paris. After its initial appearance on the arXiv, the work has been improved by discussions with Daniel Grieser, Rupert Frank, and Bernard Helffer.
This research is supported by NWO project OCENW.KLEIN.375.

\bibliographystyle{plain}
\bibliography{bibliography.bib}

\begin{thebibliography}{10}

\bibitem{abramowitz1988handbook}
M.~Abramowitz, I.A. Stegun, and R.H. Romer.
\newblock {\em Handbook of mathematical functions with formulas, graphs, and
  mathematical tables}.
\newblock American Association of Physics Teachers, 1988.

\bibitem{ABB}
A.~Agrachev, D.~Barilari, and U.~Boscain.
\newblock {\em A comprehensive introduction to sub-Riemannian geometry}, volume
  181.
\newblock Cambridge University Press, 2019.

\bibitem{anne1992bornes}
C.~Ann{\'e}.
\newblock Bornes sur la multiplicit{\'e}.
\newblock {\em Pr{\'e}publications EPFL, mars}, 1992.

\bibitem{Bor}
D.~Borthwick.
\newblock {\em Spectral Theory: Basic Concepts and Applications}, volume 284.
\newblock Springer Nature, 2020.

\bibitem{boscain2002optimal}
U.~Boscain, G.~Charlot, J.~P. Gauthier, S.~Gu{\'e}rin, and H.~R. Jauslin.
\newblock Optimal control in laser-induced population transfer for two-and
  three-level quantum systems.
\newblock {\em Journal of Mathematical Physics}, 43(5):2107--2132, 2002.

\bibitem{boscain2013laplace}
U.~Boscain and C.~Laurent.
\newblock The laplace-beltrami operator in almost-riemannian geometry.
\newblock {\em Annales de l'institut Fourier}, 63(5):1739--1770, 2013.

\bibitem{bramanti2014invitation}
M.~et~al. Bramanti.
\newblock {\em An invitation to hypoelliptic operators and H{\"o}rmander's
  vector fields}, volume 1298.
\newblock Springer, 2014.

\bibitem{buser1978ungleichung}
P.~Buser.
\newblock {\"U}ber eine ungleichung von cheeger.
\newblock {\em Mathematische Zeitschrift}, 158:245--252, 1978.

\bibitem{car09}
C.~Carath{\'e}odory.
\newblock Untersuchungen {\"u}ber die grundlagen der thermodynamik.
\newblock {\em Mathematische Annalen}, 67(3):355--386, 1909.

\bibitem{cassano2016some}
F.S. Cassano.
\newblock Some topics of geometric measure theory in carnot groups.
\newblock {\em Geometry, analysis and dynamics on sub-Riemannian manifolds},
  1:1--121, 2016.

\bibitem{chahine2020volume}
Yousef~K Chahine.
\newblock Volume estimates for tubes around submanifolds using integral
  curvature bounds.
\newblock {\em The Journal of Geometric Analysis}, 30(4):4071--4091, 2020.

\bibitem{cheeger2015lower}
J.~Cheeger.
\newblock A lower bound for the smallest eigenvalue of the laplacian.
\newblock In {\em Problems in Analysis: A Symposium in Honor of Salomon Bochner
  (PMS-31)}, pages 195--200. Princeton University Press, 2015.

\bibitem{cdv2022spectral}
Y.~Colin~de Verdiere, L.~Hillairet, and E.~Tr{\'e}lat.
\newblock Spectral asymptotics for sub-riemannian laplacians.
\newblock {\em arXiv preprint arXiv:2212.02920}, 2022.

\bibitem{courant1923allgemeiner}
R.~Courant.
\newblock Ein allgemeiner satzt zur theorie der eigenfunktionen
  selbsadjungierter differentialausdr{\"u}cke.
\newblock {\em Nachrichten von der Gesellschaft der Wissenschaften zu
  G{\"o}ttingen, Mathematisch-Physikalische Klasse}, 1923:81--84, 1923.

\bibitem{ES}
S.~Eswarathasan and C.~Letrouit.
\newblock {Nodal Sets of Eigenfunctions of Sub-Laplacians}.
\newblock {\em International Mathematics Research Notices}, 2023.

\bibitem{Evans}
L.C. Evans.
\newblock {\em Partial Differential Equations}, volume~19.
\newblock American Mathematical Society, 1997.

\bibitem{franceschi2022cheeger}
V.~Franceschi, A.~Pinamonti, G.~Saracco, and G.~Stefani.
\newblock The cheeger problem in abstract measure spaces.
\newblock {\em arXiv preprint arXiv:2207.00482}, 2022.

\bibitem{franceschi2020essential}
V.~Franceschi, D.~Prandi, and L.~Rizzi.
\newblock On the essential self-adjointness of singular sub-laplacians.
\newblock {\em Potential Analysis}, 53:89--112, 2020.

\bibitem{franchi2003structure}
B.~Franchi, R.~Serapioni, and F.S. Cassano.
\newblock On the structure of finite perimeter sets in step 2 carnot groups.
\newblock {\em The Journal of Geometric Analysis}, 13:421--466, 2003.

\bibitem{frank2024courant}
R.L. Frank and B.~Helffer.
\newblock On courant and pleijel theorems for sub-riemannian laplacians.
\newblock {\em arXiv preprint arXiv:2402.13953}, 2024.

\bibitem{garofalo1996isoperimetric}
N.~Garofalo and D.~Nhieu.
\newblock Isoperimetric and sobolev inequalities for carnot-carath{\'e}odory
  spaces and the existence of minimal surfaces.
\newblock {\em Communications on Pure and Applied Mathematics},
  49(10):1081--1144, 1996.

\bibitem{gordina2016sub}
M.~Gordina and T.~Laetsch.
\newblock Sub-laplacians on sub-riemannian manifolds.
\newblock {\em Potential Analysis}, 44:811--837, 2016.

\bibitem{grieser2006first}
D.~Grieser.
\newblock The first eigenvalue of the laplacian, isoperimetric constants, and
  the max flow min cut theorem.
\newblock {\em Archiv der Mathematik}, 87:75--85, 2006.

\bibitem{Asma14}
A.~Hassannezhad and G.~Kokarev.
\newblock Sub-laplacian eigenvalue bounds on sub-riemannian manifolds.
\newblock {\em ANNALI SCUOLA NORMALE SUPERIORE - CLASSE DI SCIENZE}, page
  1049–1092, 2016.

\bibitem{hormander1967hypoelliptic}
Lars H{\"o}rmander.
\newblock Hypoelliptic second order differential equations.
\newblock {\em Acta Mathematica}, 119(0):147–171, 1967.

\bibitem{letrouit2023observability}
C.~Letrouit and C.~Sun.
\newblock Observability of baouendi--grushin-type equations through resolvent
  estimates.
\newblock {\em Journal of the Institute of Mathematics of Jussieu},
  22(2):541--579, 2023.

\bibitem{TSG}
M.~Levitin, D.~Mangoubi, and I.~Polterovich.
\newblock Topics in spectral geometry, preliminary version dated May 29, 2023.

\bibitem{LiebLoss}
E.H. Lieb and M.~Loss.
\newblock {\em Analysis}, volume~14.
\newblock American Mathematical Soc., 2001.

\bibitem{srgmt}
V.~Magnani.
\newblock {\em Elements of geometric measure theory on sub-Riemannian groups}.
\newblock Scuola normale superiore, 2002.

\bibitem{magnani2005coarea}
V.~Magnani.
\newblock The coarea formula for real-valued lipschitz maps on stratified
  groups.
\newblock {\em Mathematische Nachrichten}, 278(14):1689--1705, 2005.

\bibitem{montefalcone2013geometric}
F.~Montefalcone.
\newblock Geometric inequalities in carnot groups.
\newblock {\em Pacific Journal of Mathematics}, 263(1):171--206, 2013.

\bibitem{Nic}
L.I. Nicolaescu.
\newblock {\em Lectures on the Geometry of Manifolds}.
\newblock World Scientific, 2020.

\bibitem{prandi2018quantum}
D.~Prandi, L.~Rizzi, and M.~Seri.
\newblock Quantum confinement on non-complete riemannian manifolds.
\newblock {\em Journal of Spectral Theory}, 8(4):1221--1280, 2018.

\bibitem{prandi2019sub}
D.~Prandi, L.~Rizzi, and M.~Seri.
\newblock A sub-riemannian santal{\'o} formula with applications to
  isoperimetric inequalities and first dirichlet eigenvalue of hypoelliptic
  operators.
\newblock {\em Journal of differential geometry}, 111(2):339--379, 2019.

\bibitem{shi1989deforming}
Wan-Xiong Shi.
\newblock Deforming the metric on complete riemannian manifolds.
\newblock {\em Journal of Differential Geometry}, 30(1):223--301, 1989.

\end{thebibliography}

\end{document}